\numberwithin{equation}{section}
\newtheorem{theorem}{Theorem}[section]
\newtheorem{definition}[theorem]{Definition}
\newtheorem{proposition}[theorem]{Proposition}
\newtheorem{corollary}[theorem]{Corollary}
\newtheorem{lemma}[theorem]{Lemma}
\newtheorem{example}[theorem]{Example}
\newcommand{\supp}{{\rm Supp}}
\newcommand{\dist}{\mathop{\mathrm{dist}}\nolimits}
\newcommand{\ddc}{dd^c}
\newcommand{\dc}{d^c}
\newcommand{\PSH}{{\rm PSH}}
\newcommand{\capK}{\text{cap}}
\newcommand{\B}{\mathbb{B}}
\newcommand{\C}{\mathbb{C}}
\newcommand{\N}{\mathbb{N}}
\newcommand{\R}{\mathbb{R}}
\title{\bf  Uniform diameter estimates for K\"ahler metrics}
\providecommand{\keywords}[1]{\textbf{\textit{Keywords:}} #1}
\providecommand{\subject}[1]{\textbf{\textit{Mathematics Subject Classification 2010:}} #1}
\author{Duc-Viet Vu}
\newcommand{\Addresses}{{
		\bigskip
		\footnotesize
		\textsc{Duc-Viet Vu, University of Cologne, Division of Mathematics, Department of Mathematics and Computer Science, Weyertal 86-90, 50931, K\"oln.}
		\noindent
		\par\nopagebreak
		\noindent
		\textit{E-mail address}: \texttt{dvu@uni-koeln.de}
}}
\date{\today}
\begin{document}

\maketitle

\begin{abstract} We prove a uniform diameter estimate and a uniform local non-collapsing of volumes for a large family of K\"ahler metrics generalizing those obtained recently by Guo-Phong-Song-Sturm.  We treat also similar questions in the singular setting. 
\end{abstract}
\noindent
\keywords {Monge-Amp\`ere equation}, {Diameter}, {Closed positive current}, {Dinh-Sibony-Sobolev space}, {Gromov-Hausdorff topology}, {K\"ahler-Ricci flow}.
\\

\noindent
\subject{32U15}, {32Q15}, {32S99}, {53C23}.



\section{Introduction}

Let $(X,\omega_X)$ be a compact K\"ahler manifold of dimension $n$. Studying the convergence of the metric spaces $(X,\omega)$ as $\omega$ varies in a certain family of K\"ahler forms has been a central topic in recent developments in K\"ahler geometry. Two important instances of such questions are the degeneration of Ricci-flat (or K\"ahler-Einstein) metrics  and the convergence of K\"ahler-Ricci flows; see, e.g., \cite{Donaldson-Sun,DonaldsonSun2,Liu-Szekelyhidi,Liu-Szekelyhidi2,Tian-survey-KE,Tosatti-survey,Tosatti-KEflow,Song-Tian-canonicalmeasure}.  In order to apply Gromov's precompactness theorem for Gromov-Hausdorff convergence of metric spaces, it is crucial to have, among other things, a uniform bound on the diameters of metric spaces in consideration. This paper concerns such a bound for a quite large family of K\"ahler metrics of Monge-Amp\`ere type.   

The problem has a long history dated back to classical results in Riemannian geometry. There were several ad hoc uniform diameter bounds available in the literature in dealing with degenerations of K\"ahler metrics such as Ricci-flat ones or K\"ahler-Ricci flows. A very general treatment was found recently by Guo-Phong-Song-Sturm  \cite{Guo-Phong-Song-Sturm} developing further the method in \cite{Guo-Phong-Sturm-green,Guo-Phong-Tong}. Let us now recall their result. Let $\mathcal{K}(X)$ be the space of smooth K\"ahler metrics on $X$.

Let $A,K$ be positive constants and let $p_0>n$ be a constant. For $\omega \in \mathcal{K}(X)$, we denote
$$V_\omega:= \int_X \omega^n, \quad \mathcal{N}_{X, \omega_X,p_0}(\omega):= \frac{1}{V_\omega}\int_X \bigg|\log \big(V_\omega^{-1} \frac{\omega^n}{\omega_X^n} \big) \bigg|^{p_0} \omega^n.$$
We define
$$\mathcal{V}(X,\omega_X,n,A,p_0,K):= \{ \omega \in \mathcal{K}(X): \{\omega\} \cdot \{\omega_X\}^{n-1} \le A, \, \mathcal{N}_{X, \omega_X, p_0}(\omega) \le K \},$$
where $\{\omega\}, \{\omega_X\}$ denote the cohomology class of $\omega$, $\omega_X$ respectively. One observes that the condition 
$$\{\omega\} \cdot \{\omega_X\}^{n-1} \le A$$
 is equivalent to saying that $\{\omega\}$ belongs a fixed compact subset in  $H^{1,1}(X,\R)$.  And if $V_\omega^{-1}\omega^n =f \omega_X^n$ for $f \in L^1 \log L^{p_0}(X, \omega_X^n)$, then 
$$\mathcal{N}_{X, \omega_X, p_0}(\omega)=\|f\|_{L^1 \log L^{p_0}}:= \int_X f |\log f|^{p_0} \omega_X^n.$$
Let $\gamma$ be a non-negative continuous function on $X$ satisfying that the Hausforff dimension of $\{\gamma=0\}$ is strictly less than $2n-1$. Let 
$$\mathcal{W}(X, \omega_X,n,A,p_0,K,\gamma):= \{\omega \in \mathcal{V}(X, \omega_X,n,A,p_0,K): V_\omega^{-1} \omega^n \ge \gamma \omega_X^n\}.$$
Roughly speaking, $\mathcal{W}(X, \omega_X,n,A,p_0,K,\gamma)$ is a subset of $\mathcal{V}(X, \omega_X,n,A,p_0,K)$ containing K\"ahler metrics whose Monge-Amp\`ere is bounded from below uniformly away from $0$ outside a  (fixed) subset in $X$ of Hausdorff dimension $<2n-1$. One should notice that the family $\mathcal{V}(X, \omega_X,n,A,p_0,K)$ is much larger than $\mathcal{W}(X, \omega_X,n,A,p_0,K,\gamma)$; see Example \ref{ex-no-nonvanishing} at the end of the paper.

It was proved in \cite[Theorem 1.1]{Guo-Phong-Song-Sturm} that the diameter of $(X,\omega)$ for 
$$\omega \in \mathcal{W}(X, \omega_X,n,A,p_0,K,\gamma) \quad (p_0>n)$$
 is uniformly bounded, and a uniform local non-collapsing estimate for volume of $\omega$ holds as well. This extends many previous results, especially, some of those in \cite{Jian-Song}.   In particular, one obtains a version of Gromov's precompactness theorem for the family  $\mathcal{W}(X, \omega_X,n,A,p_0,K,\gamma)$ (see \cite[Theorem 1.2]{Guo-Phong-Song-Sturm}). These results were extended recently in \cite{Guo-Phong-Song-Sturm2} to the case where $X$ is singular but only for a subfamily of $\mathcal{W}(X, \omega_X,n,A,p_0,K,\gamma)$ consisting of metrics whose volumes are uniformly non-collapsing (see \cite[Theorem 3.1]{Guo-Phong-Song-Sturm2} for a precise statement).  One of the notable features of these results is that they do not require any assumption on a lower bound for Ricci curvature which is usually assumed in the classical convergence theory of Riemannian manifolds. 

Under assumptions on lower bounds for Ricci curvature, diameter estimates and local non-collapsing of volumes were also established in \cite{Fu-Guo-Song-geometricestimates,Guo-Song-localnoncollapsing}  if a lower bounded for Ricci curvature is assumed; we refer  also to \cite{GGZ-logcontiu} for some generalizations, and to  \cite{Tosatti-collapsing,Rong-Zhang} for the case of Ricci-flat metrics. 
However the main issue is that the condition on a lower bound for Ricci curvature is difficult to verify in applications (such a bound does not even hold in some cases; see \cite[Section 5]{GGZ-logcontiu}).

We underline that if we restrict ourselves to a subset of $\mathcal{V}(X,\omega_X,n,A,p_0,K)$ containing $\omega$ in a given compact subset of the K\"ahler cone in $X$, then we can say much more about the distance function of $\omega$: it is $\log$ continuous with respect to the distance induced by $\omega_X$; see \cite{Guo-Phong-Tong-Wang,GGZ-logcontiu,YangLi,Vu-log-diameter}. This $\log$ continuity property also holds for the distance function of Ricci-flat metrics in semi-positive big cohomology class; see \cite{Do-Vu-log-continuity}.

In order to treat at the same time the singular setting, we  introduce now a larger class of metrics.   Let $\tilde{\mathcal{K}}(X)$ be the set of closed positive $(1,1)$-currents $\omega$ of bounded potentials on $X$ such that $\omega$ is a smooth K\"ahler form on an open Zariski dense subset in $X$ and belong to  a big and semi-positive cohomology class in $X$. Recall that a cohomology class is said to be semi-positive if it contains a smooth semi-positive closed form.  For $\omega \in \tilde{\mathcal{K}}(X)$, the quantity $\mathcal{N}_{X,\omega_X,p_0}(\omega)$ still makes sense. We put
$$\tilde{\mathcal{V}}(X,\omega_X,n,A,p_0,K):= \{ \omega \in \tilde{\mathcal{K}}(X): \{\omega\} \cdot \{\omega_X\}^{n-1} \le A, \, \mathcal{N}_{X, \omega_X, p_0}(\omega) \le K \}.$$
Observe that $\tilde{\mathcal{V}}(X,\omega_X,n,A,p_0,K)$ contains $\mathcal{V}(X,\omega_X,n,A,p_0,K)$. Moreover if $\theta$ is a smooth closed $(1,1)$-form in a big and semi-positive class and $F$ is a smooth function with $\int_X e^F \omega_X^n = \int_X \theta^n$, then there exists a bounded $\theta$-psh function $u$  such that $(\ddc u+\theta)^n =e^F \omega_X^n$ and $\ddc u+ \theta$ is a K\"ahler form on an open Zariski dense subset in $X$ (see \cite{BEGZ} for details, also \cite{Yau1978,Kolodziej05}). This provides examples for elements in $\tilde{\mathcal{V}}(X,\omega_X,n,A,p_0,K)$. 

Observe that if $X'$ is a K\"ahler normal variety and $\pi: Y \to X'$ be a resolution of singularities and $\theta_Y$ is a K\"ahler form on $Y$, then the pull-back of currents in 
$$\mathcal{AK}(X',\theta_Y,n,p_0,A,K,\gamma)$$
 (see \cite[Definition 3.1]{Guo-Phong-Song-Sturm2} and also Definition \ref{def-AKmoi} below) belong to  $\tilde{\mathcal{V}}(Y,\theta_Y,n,A,p_0,K)$. Hence in order to treat metrics in $\mathcal{AK}(X',\theta_Y,n,p_0,A,K,\gamma)$ (which contains canonical metrics such as Ricci-flat or K\"ahler-Einstein ones if $X'$ has at worst log terminal singularities), it suffices to consider $\tilde{\mathcal{V}}(X,\omega_X,n,A,p_0,K)$. A uniform diameter bound and a uniform local non-collapsing estimate for volumes of metrics in $\mathcal{AK}(X',\theta_Y,n,p_0,A,K,\gamma)$ were established in \cite{Guo-Phong-Song-Sturm2}. The advantage of $\tilde{\mathcal{V}}(Y,\theta_Y,n,A,p_0,K)$ is that it does not require that the volume $\omega^n$ of $\omega$ has log type analytic singularities as in the case for $\mathcal{AK}(X',\theta_Y,n,p_0,A,K,\gamma)$ (see the condition \cite[Definition 3.1 (4)]{Guo-Phong-Song-Sturm2}, and Definition \ref{def-AKmoi} below).

For $\omega \in \tilde{\mathcal{V}}(X,\omega_X,n,A,p_0,K)$, let $U_\omega \subset X$ be an open Zariski dense subset on which $\omega$ is a smooth K\"ahler form. Let $(\widehat X, \omega)$ be the completion of the metric space $(U_\omega, \omega)$. We extends the smooth measure $\omega^n$ on $U_\omega$ to $\widehat X$ by putting $\omega^n(\widehat X \backslash U_\omega):=0$. One observes that $(\widehat X, \omega, \omega^n)$ is, modulo isometries, independent of the choice of $U_\omega$. We denote by $diam(\widehat X, \omega)$ the diameter of $(\widehat X, \omega)$.
Here is our first main result.

\begin{theorem}\label{the-main-univolume}  Let $A,K$ be positive constants and let $p_0>n$ be a constant. Let 
$$\tilde{\mathcal{V}}(X,\omega_X,n,A,p_0,K)$$
 be the family of K\"ahler metrics as above. Then there exist constants $C>0, q>0$ such that
 $$diam(\widehat X, \omega) \le C, \quad \frac{vol_\omega(B_\omega(x,r))}{r^q V_\omega} \ge C^{-1}$$
  for every $\omega \in \tilde{\mathcal{V}}(X,\omega_X,n,A,p_0,K)$, every $r \in (0, diam(\widehat X, \omega)]$ and every $x \in \widehat X$, where $B_\omega(x,r)$ denotes the ball of radius $r$ centered at $x \in \widehat X$ in $(\widehat X, \omega)$, and $vol_\omega(B_\omega(x,r))$ denotes the volume  of $B_\omega(x,r)$ with respect to $\omega^n$. 
  \end{theorem}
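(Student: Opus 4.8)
The plan is to reduce the result for $\tilde{\mathcal{V}}$ to the already-understood case of genuine K\"ahler metrics by an approximation/regularization argument, and to prove the two estimates (diameter upper bound and volume lower bound) simultaneously via an auxiliary Monge-Amp\`ere equation controlled by the Dinh-Sibony-Sobolev space techniques. The key analytic mechanism, following Guo-Phong-Song-Sturm, is the following: to bound $\dist_\omega(x,y)$ from above by $C$ (and to get the non-collapsing), one constructs, for each pair of points or for each ball, an auxiliary function $\varphi$ solving $(\ddc\varphi + \omega)^n = c\, e^{\lambda \psi} \omega^n$ (or a truncated variant), where $\psi$ is tailored to the geometry in question, and one uses an $L^\infty$ a priori estimate for $\varphi$ depending only on $A$, $p_0$, $K$. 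The $L^\infty$ bound comes from the Alexandrov-Bakelman-Pucci / Ko\l odziej-type estimate, whose only inputs are the entropy bound $\mathcal{N}_{X,\omega_X,p_0}(\omega)\le K$ with $p_0>n$ and the cohomological bound $\{\omega\}\cdot\{\omega_X\}^{n-1}\le A$; crucially, this estimate does \emph{not} see whether $\omega$ is smooth everywhere or only on a Zariski-dense open set, nor does it see a Ricci lower bound.

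First I would set up the comparison geometry on $U_\omega$ exactly as in \cite[Theorem 1.1]{Guo-Phong-Song-Sturm}: fix $x_0 \in U_\omega$, let $\rho(x) := \dist_\omega(x_0, x)$ on $\widehat X$, and for the diameter bound run the argument that controls sublevel sets $\{\rho > t\}$ through the solution of an auxiliary complex Monge-Amp\`ere equation with right-hand side proportional to the indicator (suitably mollified) of an annulus $\{t < \rho < t+1\}$; the $L^\infty$ bound on the solution, uniform in $\omega \in \tilde{\mathcal{V}}$, feeds into a Gromov-type differential inequality for $\vol_\omega(\{\rho > t\})$ which forces $\{\rho > t\} = \emptyset$ for $t \ge C$. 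For the non-collapsing estimate $\vol_\omega(B_\omega(x,r)) \ge C^{-1} r^q V_\omega$ I would instead solve an auxiliary equation adapted to a single ball $B_\omega(x,r)$ and extract a lower bound on its volume from the same $L^\infty$ estimate combined with the Chern-Levine-Nirenberg inequality / a Sobolev inequality in the Dinh-Sibony framework; the exponent $q$ depends only on $n, p_0$. Throughout, one works on $U_\omega$, where $\omega$ is a genuine K\"ahler metric, so all the integration-by-parts and maximum-principle manipulations are licit; the passage to the completion $\widehat X$ is harmless because $\widehat X \setminus U_\omega$ is $\omega^n$-null by construction and, being of Hausdorff codimension $\ge 1$ in the metric $\omega$ (a consequence of $\omega$ having bounded potentials), does not disconnect balls.

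The main obstacle, and the place where genuine new work is needed relative to \cite{Guo-Phong-Song-Sturm}, is precisely that $\omega$ is only defined on the Zariski-open set $U_\omega$ and is a current with bounded potentials across $X \setminus U_\omega$, rather than a smooth K\"ahler form on all of $X$. Two points must be handled with care. (i) The auxiliary Monge-Amp\`ere equations must be solved in the big, semi-positive class $\{\omega\}$ using the Boucksom-Eyssidieux-Guedj-Zeriahi theory rather than Yau's theorem; the solutions are bounded potentials, K\"ahler on a Zariski-open set, and the $L^\infty$ estimate of \cite{Guo-Phong-Sturm-green,Guo-Phong-Tong} has to be invoked in the form valid for semi-positive big classes (with constants depending only on $A, p_0, K$, uniformly as $\{\omega\}$ ranges over the fixed compact subset of $H^{1,1}$). (ii) One must verify that the metric-space/length-space manipulations — existence of geodesics, the coarea formula relating $\frac{d}{dt}\vol_\omega(\{\rho>t\})$ to the area of level sets, integration of $\ddc$ against the distance function — remain valid on $(\widehat X, \omega)$ when $\omega$ degenerates on a polar set; here the bounded-potential hypothesis is essential, and I would quote the relevant regularity of bounded-potential K\"ahler currents (local boundedness of the metric away from the polar locus, finiteness of $\vol_\omega$, quasi-continuity of potentials) to justify that the level sets have locally finite area and that the exceptional set contributes nothing. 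Once these two technical upgrades are in place, the proof structure is identical to the smooth case: solve the auxiliary equation, apply the uniform $L^\infty$ estimate, run the Gromov-type iteration for the diameter and the Sobolev-type lower bound for the volume.
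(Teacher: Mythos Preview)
Your outline follows the auxiliary Monge--Amp\`ere route of the original Guo--Phong--Song--Sturm paper, but this is \emph{not} the route the present paper takes, and your sketch has a genuine gap at the point where you most need precision.

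The paper's argument is structurally different. It does not run a Gromov-type differential inequality for $\vol_\omega(\{\rho>t\})$ via auxiliary Monge--Amp\`ere equations. Instead it (a) builds an auxiliary current $\eta$ in the same class as $\omega$ whose density is bounded \emph{below} by a fixed positive constant, with $\omega^n \le C\eta^n$ and $\omega-\eta=\ddc\tilde\psi$, $0\le\tilde\psi\le M$; (b) approximates $\eta$ (not $\omega$!) by smooth K\"ahler forms $\eta_\epsilon$ which lie in a family $\mathcal W(X,\omega_X,n,A',p_0,K',\tfrac14(1+V_{\omega_X})^{-1})$ where the Sobolev inequality of \cite{Guo-Phong-Song-Sturm2} applies, and passes to the limit $\epsilon\to0$ via a continuity lemma for Monge--Amp\`ere measures against $W^{1,2}_*$ functions; (c) applies this Sobolev inequality to powers of $v=1+d_\omega(x_0,\cdot)$, using a new \emph{energy estimate} in the Dinh--Sibony--Sobolev space (Proposition~\ref{pro-sobo-etatoT}) to convert $\int v^{p-2}\omega\wedge\eta^{n-1}$ into a constant times $\int v^p\eta^n$; (d) runs Moser iteration on $v$ directly to get $\|v\|_{L^\infty}\le C$; (e) does the non-collapsing similarly with a cutoff $u_r=\rho(d_\omega(x,\cdot)/r)$. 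The integration-by-parts formula in $W^{1,2}_*$ (Theorem~\ref{th-integrationbyparts}) is what makes step (c) work across the singular locus, and an initial $L^2$ bound for $d_\omega(x_0,\cdot)$ is obtained by an argument \`a la Tosatti/Rong--Zhang, yielding exponential integrability (Lemma~\ref{le-L2boundXbinh}).

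The gap in your plan is the sentence ``the $L^\infty$ bound on the solution \ldots\ feeds into a Gromov-type differential inequality for $\vol_\omega(\{\rho>t\})$''. In the original GPSS argument, that step is exactly where the lower bound $V_\omega^{-1}\omega^n\ge\gamma\,\omega_X^n$ is used: without it the Green's function / Sobolev estimate that converts the $L^\infty$ bound into a volume differential inequality fails. Removing $\gamma$ is precisely the content of the theorem and requires a non-trivial improvement of the GPSS key lemma (done independently in \cite{GuedjTo-diameter,GPSS_bodieukien} \emph{after} this paper); you have not indicated any mechanism for this. Secondly, your opening line ``reduce \ldots\ to the already-understood case of genuine K\"ahler metrics by an approximation/regularization argument'' is exactly what the paper warns against: since no structure is imposed on the singularities of $\log(\omega^n/\omega_X^n)$, it is unclear how to approximate $\omega$ by smooth metrics in $\mathcal V$ while preserving the entropy bound. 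The paper circumvents both issues by approximating the \emph{auxiliary} $\eta$ (whose density is smooth and bounded below by construction) rather than $\omega$, and by transferring between $\omega$ and $\eta$ via the $W^{1,2}_*$ energy estimate rather than via a pointwise comparison.
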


Shortly after the first version of this paper appeared in ArXiv, similar estimates as in Theorem \ref{the-main-univolume} for $\mathcal{V}(X,\omega_X,n,A,p_0,K)$ were obtained independently in \cite{GuedjTo-diameter,GPSS_bodieukien} by improving a key lemma in \cite{Guo-Phong-Song-Sturm} (using auxiliary complex Monge-Amp\`ere equations). 

We would like to point out that that since there is no assumption on the singularity of $\log (\omega^n/\omega_X^n)$ for $\omega$ in the family $\tilde{\mathcal{V}}(X,\omega_X,n,A,p_0,K)$, it is not clear if one can approximate such a metric smoothly on compact subsets in an open Zariski dense subset in $X$ by metrics in $\mathcal{V}(X,\omega_X,n,A,p_0,K)$. Such an approximation is crucial for the proof of Sobolev inequalities for metrics  in $\mathcal{AK}(X',\theta_Y,n,p_0,A,K,\gamma)$  in \cite{Guo-Phong-Song-Sturm2}.

Let $\mathcal{A}(X,\omega_X,n,A,p_0,K)$ be the set of all $(\widehat X, \omega)$ for $\omega \in \tilde{\mathcal{V}}(X,\omega_X,n,A,p_0,K)$. Observe that $(\widehat X, \omega)$ is compact because the diameter of $(U_\omega,\omega)$ is finite.
As in \cite[Theorem 1.2]{Guo-Phong-Song-Sturm}, by Theorem \ref{the-main-univolume}, we obtain immediately 
  
\begin{corollary} \label{cor-precompactGromov}   Let $A,K$ be positive constants and let $p_0>n$ be a constant. Then, the set $\mathcal{A}(X,\omega_X,n,A,p_0,K)$ is relatively compact in the Gromov-Hausdorff topology on the space of compact metric spaces.
\end{corollary}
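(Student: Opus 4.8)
The plan is to deduce Corollary~\ref{cor-precompactGromov} directly from Theorem~\ref{the-main-univolume} together with Gromov's precompactness theorem for metric spaces. Recall that Gromov's theorem states that a family of compact metric spaces is relatively compact in the Gromov--Hausdorff topology if and only if it is uniformly totally bounded, i.e., there is a uniform bound on the diameters, and for every $\epsilon>0$ there is $N=N(\epsilon)\in\N$ such that every space in the family can be covered by at most $N$ balls of radius $\epsilon$. Thus the whole task is to extract these two pieces of data from Theorem~\ref{the-main-univolume}.

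First I would fix $\omega\in\tilde{\mathcal{V}}(X,\omega_X,n,A,p_0,K)$ and work in the compact metric space $(\widehat X,\omega)$, which is compact since $(U_\omega,\omega)$ has finite diameter. Theorem~\ref{the-main-univolume} immediately gives the uniform diameter bound $\mathrm{diam}(\widehat X,\omega)\le C$, so the first hypothesis of Gromov's criterion holds with the constant $C$ independent of $\omega$. For the covering bound, fix $\epsilon\in(0,C]$ (WLOG $\epsilon\le C$, otherwise the whole space is a single ball of radius $\epsilon$) and take a maximal $\epsilon$-separated set $\{x_1,\dots,x_m\}\subset\widehat X$; then the balls $B_\omega(x_i,\epsilon)$ cover $\widehat X$ by maximality, while the balls $B_\omega(x_i,\epsilon/2)$ are pairwise disjoint. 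Applying the non-collapsing estimate of Theorem~\ref{the-main-univolume} at radius $r=\epsilon/2$ (note $\epsilon/2\le\mathrm{diam}(\widehat X,\omega)$ since $\epsilon\le C$ and the diameter is at least comparable to $\epsilon$ after possibly shrinking $\epsilon$; more precisely one applies the estimate at $r=\min(\epsilon/2,\mathrm{diam})$ and handles the trivial case separately) yields $\mathrm{vol}_\omega(B_\omega(x_i,\epsilon/2))\ge C^{-1}(\epsilon/2)^q V_\omega$ for each $i$. Summing over $i$ and using disjointness together with $\sum_i\mathrm{vol}_\omega(B_\omega(x_i,\epsilon/2))\le V_\omega$ gives $m\le C(\epsilon/2)^{-q}=:N(\epsilon)$, a bound depending only on $\epsilon$ and the fixed constants $A,p_0,K,n$, not on $\omega$.

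Having produced a uniform diameter bound and a uniform covering number $N(\epsilon)$, the family $\mathcal{A}(X,\omega_X,n,A,p_0,K)$ is uniformly totally bounded, so Gromov's precompactness theorem applies and the family is relatively compact in the Gromov--Hausdorff topology. The only point requiring slight care is the edge case where $\mathrm{diam}(\widehat X,\omega)$ is very small compared to $\epsilon$, which is handled trivially since then $\widehat X$ is covered by one $\epsilon$-ball; there is no real obstacle here, as this corollary is a standard and immediate consequence of the quantitative estimates in Theorem~\ref{the-main-univolume}.
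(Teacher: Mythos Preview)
Your proposal is correct and follows exactly the standard route the paper has in mind: the paper does not spell out a proof but simply says the corollary is obtained ``immediately'' from Theorem~\ref{the-main-univolume} ``as in \cite[Theorem 1.2]{Guo-Phong-Song-Sturm}'', which is precisely the Gromov precompactness argument via uniform diameter plus a packing bound from the volume non-collapsing estimate. Your handling of the edge case $\epsilon/2>\mathrm{diam}(\widehat X,\omega)$ is fine; there is nothing to add.
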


We would like to stress that the feature of these families of metrics in this paper as well as in previous ones (e.g, \cite{Guo-Phong-Song-Sturm2,Guo-Phong-Song-Sturm,GGZ-logcontiu,Vu-log-diameter}) is that they present a large geometrically meaningful class of K\"ahler metrics whose metric properties can still be, to some extent, uniformly controllable.  Such things are remarkable and go beyond the convergence theory of Riemannian manifolds for which assumptions on uniform boundedness of curvatures are usually required.

We now give an application of Theorem \ref{the-main-univolume} to K\"ahler-Ricci flows. Let $\omega_0$ be a smooth K\"ahler metric on $X$. Consider the normalized K\"ahler-Ricci flow starting from $\omega_0$:
 $$\partial_t \omega(t)= - Ric(\omega(t)) - \omega(t), \quad \omega(0)=\omega_0.$$
 It is well-known that the flow exists for $t \in [0, \infty)$ if and only if $K_X$ is nef (see \cite{TianZhang}).  A main problem is to study the convergence of $(X, \omega(t))$ for $t \to \infty$. One can consult the very recent survey \cite{Tosatti-surveyKE} for an overview. It was proved in \cite{Guo-Phong-Song-Sturm} that if $K_X$ is nef and the Kodaira dimension of $X$ is nonnegative, then the diameter of $(X, \omega(t))$ is bounded uniformly in $t$; see also \cite{Jian-Song} for the case where $K_X$ is semi-ample. By Theorem \ref{the-main-univolume}, we can remove the assumption on the nonnegativity of the Kodaira dimension and obtain 

\begin{theorem} \label{the-KEflow} Assume that $K_X$ is nef. Then the diameter of $(X, \omega(t))$ is bounded uniformly in $t \in [0,\infty)$ and there exist constants $q>0, C>0$ such that 
$$\frac{vol_{\omega(t)}(B_{\omega(t)}(x,r))}{r^q V_{\omega(t)}} \ge C$$
 for every $x \in X$ and $r \in (0, diam(X, \omega(t)]$. In particular, there exists a sequence $(t_j)_{j \in \N} \subset (0, \infty)$ converging to $\infty$ so that the sequence of compact metric spaces $(X, \omega(t_j))$ converges, in the Gromov-Hausdorff topology, to a compact metric space. 
\end{theorem}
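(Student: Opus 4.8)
The plan is to deduce Theorem \ref{the-KEflow} from Theorem \ref{the-main-univolume} by checking that, along the normalized K\"ahler-Ricci flow, the metrics $\omega(t)$ (after the trivial passage to the completion, which is $X$ itself since $\omega(t)$ is a genuine smooth K\"ahler metric) lie in a single family $\tilde{\mathcal V}(X,\omega_X,n,A,p_0,K)$ with constants $A,K$ and exponent $p_0>n$ independent of $t$. Once this is established, the uniform diameter bound and the uniform local non-collapsing estimate with a common $q>0$ and $C>0$ follow verbatim from Theorem \ref{the-main-univolume}, and the existence of a Gromov-Hausdorff convergent subsequence $(X,\omega(t_j))$ follows from Corollary \ref{cor-precompactGromov} (equivalently, from Gromov's precompactness theorem applied to the resulting uniformly bounded, uniformly non-collapsing family).

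First I would fix the reference K\"ahler form. Since $K_X$ is nef, the flow exists for all $t\in[0,\infty)$ and the cohomology classes satisfy $\{\omega(t)\} = e^{-t}\{\omega_0\} + (1-e^{-t})c_1(K_X)$, so $\{\omega(t)\}$ stays in the fixed compact segment joining $\{\omega_0\}$ to $c_1(K_X)$ inside $H^{1,1}(X,\R)$; choosing any fixed K\"ahler class $\{\omega_X\}$ one immediately gets $\{\omega(t)\}\cdot\{\omega_X\}^{n-1}\le A$ for a constant $A$ depending only on $\omega_0,\omega_X,K_X$. The substantive point is the entropy bound $\mathcal N_{X,\omega_X,p_0}(\omega(t))\le K$. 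Writing $\omega(t)^n = e^{f(t)}\omega_X^n$, one has by the flow equation $\partial_t \log\big(\omega(t)^n/\omega_X^n\big) = -\,\mathrm{tr}_{\omega(t)}\mathrm{Ric}(\omega_X) - n - \Delta_{\omega(t)}(\text{something})$ up to the usual normalization; the cleanest route is to invoke the now-standard parabolic estimates for the normalized K\"ahler-Ricci flow when $K_X$ is nef, which give that the normalized Monge-Amp\`ere density, together with the normalized volume $V_{\omega(t)}^{-1}\omega(t)^n/\omega_X^n$, satisfies a uniform-in-$t$ bound in every $L^1\log L^{p_0}$ (indeed the stronger statement that its logarithm is uniformly bounded in $L^p$ for every finite $p$, and in many regimes uniformly bounded from above). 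These estimates appear in \cite{Guo-Phong-Song-Sturm} (and the auxiliary work \cite{Guo-Phong-Sturm-green,Guo-Phong-Tong} and \cite{Jian-Song} in the semi-ample case); the key input is that the potential of $\omega(t)$ relative to a fixed smooth closed form in the limiting class is uniformly bounded, which together with Ko\l{}odziej-type and Guo-Phong-Tong-type estimates yields the uniform entropy bound. Thus $\omega(t)\in\tilde{\mathcal V}(X,\omega_X,n,A,p_0,K)$ for suitable $t$-independent constants.

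With this membership in hand, Theorem \ref{the-main-univolume} directly gives constants $C>0$ and $q>0$, depending only on $n,A,p_0,K$ (hence only on $X,\omega_0,K_X$), such that $\mathrm{diam}(X,\omega(t))\le C$ and $\mathrm{vol}_{\omega(t)}(B_{\omega(t)}(x,r))\ge C^{-1} r^q V_{\omega(t)}$ for all $t\ge 0$, all $x\in X$, and all $r\in(0,\mathrm{diam}(X,\omega(t))]$; replacing $C$ by $\max(C,C^{-1})$ and renaming, this is exactly the asserted inequality. For the final convergence statement, the family $\{(X,\omega(t))\}_{t\ge0}$ is then a family of compact metric spaces with uniformly bounded diameter that is uniformly non-collapsing at the relevant scales; Gromov's precompactness theorem (in the form recorded in Corollary \ref{cor-precompactGromov}, applied to the metrics $\omega(t_j)$ for any sequence $t_j\to\infty$) produces a subsequence converging in the Gromov-Hausdorff topology to a compact metric space.

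I expect the main obstacle to be the uniform entropy bound $\mathcal N_{X,\omega_X,p_0}(\omega(t))\le K$, i.e.\ controlling $\frac{1}{V_{\omega(t)}}\int_X\big|\log\big(V_{\omega(t)}^{-1}\omega(t)^n/\omega_X^n\big)\big|^{p_0}\omega(t)^n$ uniformly in $t$ \emph{without} assuming the Kodaira dimension is nonnegative; this is precisely the improvement over \cite{Guo-Phong-Song-Sturm} and \cite{Jian-Song}. The point is that one no longer has the abundance of pluricanonical sections to produce a good comparison form, so the uniform bound on the relative potential and on the density must be extracted purely from the nef (not semi-ample, not big) hypothesis on $K_X$ together with the parabolic maximum principle and the $L^1\log L^{p_0}$-type a priori estimates; everything else in the argument is a formal invocation of Theorem \ref{the-main-univolume} and Corollary \ref{cor-precompactGromov}. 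Note also that the passage to the completion $\widehat X$ is vacuous here because each $\omega(t)$ is a smooth K\"ahler metric on all of $X$, so no subtlety arises from the singular-setting formalism.
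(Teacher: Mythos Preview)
Your overall strategy matches the paper's exactly: verify that $\omega(t)\in\mathcal V(X,\omega_X,n,A,p_0,K)$ uniformly in $t$ and then invoke Theorem \ref{the-main-univolume} and Corollary \ref{cor-precompactGromov}. The cohomology-class bound is handled just as you say.

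However, you misidentify where the work lies, and your sketch of the entropy bound is slightly garbled. The uniform entropy bound is \emph{not} the obstacle and requires no new idea beyond \cite{Guo-Phong-Song-Sturm}: writing $\omega(t)=\omega_t+\ddc\varphi$ with $\omega_t=(1-e^{-t})\theta+e^{-t}\omega_0$, the scalar equation gives $V_{\omega(t)}^{-1}\omega(t)^n = c_t\,e^{\partial_t\varphi+\varphi}\,\Omega$ with $c_t$ uniformly bounded (since $V_{\omega(t)}$ is comparable to $e^{-(n-\kappa)t}$, $\kappa$ the numerical dimension). The parabolic maximum principle yields uniform \emph{upper} bounds on $\varphi$ and $\partial_t\varphi$ (these are precisely \cite[Lemmas 9.1, 9.2]{Guo-Phong-Song-Sturm}, valid for $K_X$ nef with no Kodaira-dimension hypothesis), hence the normalized density $f_t:=V_{\omega(t)}^{-1}\omega(t)^n/\omega_X^n$ is bounded above by a fixed constant; this alone gives $\int f_t|\log f_t|^{p_0}\omega_X^n\le K$ for every $p_0$. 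There is no need for a two-sided bound on the potential, nor for Ko{\l}odziej/Guo--Phong--Tong estimates here (those run in the opposite direction, from entropy to $L^\infty$). The nonnegativity of the Kodaira dimension in \cite{Guo-Phong-Song-Sturm} was used solely to produce a lower bound $\gamma$ on the density so as to land in $\mathcal W$; the point of the present paper is that Theorem \ref{the-main-univolume} dispenses with $\gamma$, so the already-known entropy bound suffices. With this correction, your proof is the paper's proof.
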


We now consider the singular setting. We introduce the following family of singular metrics. Let $X'$ be an n-dimensional compact normal K\"ahler space. Let $\pi: Y \to X'$ be a resolution of singularities obtained by successive blow ups along smooth submanifolds as in \cite{Bierstone_Milman} and let $\theta_Y$ be a smooth K\"ahler metric  on $Y$. We recall that $Y$ is automatically K\"ahler by \cite[Lemma 2.2]{Coman-Ma-Marinescu-normal}.  Let $\eta$ be a closed positive $(1,1)$-current on $X'$ admitting local potentials. Then we define $\pi^* \eta$ as usual by pulling back local potentials of $\eta$ under $\pi$. Thus $\pi^* \eta$ is a closed positive $(1,1)$-current on $Y$. We say that $\eta$ has \emph{bounded potentials} if there are  an open covering $V_1, \ldots, V_m$ of $X'$ and bounded psh functions $\varphi_j$ on $V_j$ for $1 \le j \le m$ such that $\eta= \ddc \varphi_j$ on $V_j$ for every $j$.

\begin{definition} \label{def-AKmoi} Let $(X',\omega_{X'})$ be an n-dimensional compact normal K\"ahler space.  For positive  constants $A,K$, and $p_0>n$, we define $\tilde{\mathcal{V}}(X',\omega_{X'},n,A,p_0,K)$ to be the space of closed positive $(1,1)$-currents $\eta$ on $X'$ having local potentials such that the following properties hold:

(i) $\eta$ is a smooth K\"ahler form on an open Zariski dense subset $U_\eta$ in $X'$ and has bounded potentials,

(ii) $\int_{X'} \eta\wedge \omega^{n-1}_{X'} \le A$,

(iii) $V_\eta^{-1}\eta^n = f \omega^n_{X'}$ for some $f \ge 0$ with $\int_{X'} f |\log f|^{p_0} \omega^n_{X'} \le K$.
\end{definition}

Observe that if $X'=X$ is smooth, then $\tilde{\mathcal{V}}(X',\omega_{X'},n,A,p_0,K)$ is exactly equal to $\tilde{\mathcal{V}}(X,\omega_X,n,A,p_0,K)$ treated above.  
For every $\eta \in \tilde{\mathcal{V}}(X',\omega'_X,n,A,p_0,K)$, we define the completion $(\widehat X', \eta, \eta^n)$ as in the smooth setting. We underline that  $\tilde{\mathcal{V}}(X',\omega_{X'},n,A,p_0,K)$ is intrinsic, thus is more convenient to use than the class of metrics considered in \cite[Definition 3.1]{Guo-Phong-Song-Sturm2}.  
The following is our second main result. 

\begin{corollary} \label{cor-singularsetting} Let $A,K$ be positive constants and let $p_0>n$ be a constant. Let $X'$ be an n-dimensional compact normal K\"ahler space.  Then there exist constants $C>0, q>0$ such that
 $$diam(\widehat X', \eta) \le C, \quad \frac{vol_{\eta}(B_\eta(x,r))}{r^q V_\eta} \ge C^{-1} $$
  for every $\eta \in \tilde{\mathcal{V}}(X',\omega_{X'},n,A,p_0,K)$ and every $r \in (0, diam(\widehat X', \eta)]$ and every $x \in \widehat X'$, where $B_\eta(x,r)$ denotes the ball of radius $r$ centered at $x \in \widehat X'$ in $(\widehat X', \eta)$. 
\end{corollary}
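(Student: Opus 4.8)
The plan is to reduce Corollary \ref{cor-singularsetting} to Theorem \ref{the-main-univolume} by pulling everything back to a resolution of singularities. Fix a resolution of singularities $\pi: Y \to X'$ obtained by successive blow-ups along smooth centers as in the setup preceding Definition \ref{def-AKmoi}, and let $\theta_Y$ be a K\"ahler form on $Y$ (such a $Y$ exists and is K\"ahler by \cite[Lemma 2.2]{Coman-Ma-Marinescu-normal}). Given $\eta \in \tilde{\mathcal V}(X',\omega_{X'},n,A,p_0,K)$, set $\omega := \pi^* \eta$. Because $\eta$ has bounded potentials and is a smooth K\"ahler form on an open Zariski dense $U_\eta \subset X'$, the pullback $\omega = \pi^*\eta$ is a closed positive $(1,1)$-current on $Y$ with bounded potentials, smooth and K\"ahler on the Zariski dense open set $U_\omega := \pi^{-1}(U_\eta) \cap (Y \setminus E)$ where $E$ is the exceptional locus; moreover the cohomology class $\{\pi^*\eta\}$ is big and semi-positive (it is the pullback of a K\"ahler, hence big and nef, class, and pullbacks of semi-positive classes under holomorphic maps are semi-positive). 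Hence $\omega \in \tilde{\mathcal K}(Y)$ in the sense of the excerpt; this is exactly the observation already noted in the introduction that pullbacks of such currents land in $\tilde{\mathcal V}(Y,\theta_Y,n,\cdots)$.

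The first real step is to check that $\omega = \pi^*\eta$ lies in $\tilde{\mathcal V}(Y,\theta_Y,n,A',p_0,K')$ for constants $A', K'$ depending only on $A,K,\pi,\theta_Y$ (and not on $\eta$). For the cohomological bound, $\{\omega\}\cdot\{\theta_Y\}^{n-1} = \int_Y \pi^*\eta \wedge \theta_Y^{n-1}$; since $\pi$ is a composition of blow-ups along smooth centers, $\pi_*(\theta_Y^{n-1})$ is a closed positive $(n-1,n-1)$-current on $X'$ whose class is controlled, and by the projection formula this integral equals $\int_{X'}\eta \wedge \pi_*(\theta_Y^{n-1})$, which is bounded by a constant times $\int_{X'}\eta\wedge\omega_{X'}^{n-1} \le A$ (comparing $\pi_*\theta_Y^{n-1}$ with $\omega_{X'}^{n-1}$ on the K\"ahler space $X'$ up to a multiplicative constant, using compactness). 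For the entropy bound: on $Y$ one has $\omega^n = \pi^*(\eta^n)$ as measures on $U_\omega$ (and both assign zero mass to the exceptional/degeneration locus), while $V_\omega = \int_Y \pi^*\eta^n = \int_{X'}\eta^n = V_\eta$; writing $\eta^n = V_\eta f \,\omega_{X'}^n$ and $\theta_Y^n = g\, \pi^*\omega_{X'}^n$ near a generic point — more precisely comparing $\omega_Y^n$ with $\pi^*\omega_{X'}^n$, where the Jacobian factor of $\pi$ is a nonnegative function vanishing only on the exceptional locus — one gets $V_\omega^{-1}\omega^n = \tilde f\,\theta_Y^n$ with $\tilde f = (f\circ\pi)/(\text{Jacobian factor})$, and a change of variables shows $\int_Y \tilde f |\log \tilde f|^{p_0}\theta_Y^n$ is controlled by $\int_{X'} f|\log f|^{p_0}\omega_{X'}^n$ together with a fixed contribution from $\int |\log(\text{Jac})|^{p_0}$-type terms, which is finite because the Jacobian of a blow-up along a smooth center is a monomial in local coordinates and $|\log|$ of a monomial is locally integrable to any power against a smooth volume form (this is where $p_0$ being finite, not the precise value, matters; one may need H\"older's inequality to absorb cross terms). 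So $K'$ can be taken uniform.

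Once $\omega = \pi^*\eta \in \tilde{\mathcal V}(Y,\theta_Y,n,A',p_0,K')$, Theorem \ref{the-main-univolume} gives constants $C_0>0$, $q>0$, depending only on $Y,\theta_Y,n,A',p_0,K'$ — hence ultimately only on $X',\omega_{X'},n,A,p_0,K$ through the fixed choice of resolution — such that $\mathrm{diam}(\widehat Y,\omega)\le C_0$ and $\mathrm{vol}_\omega(B_\omega(y,r))\ge C_0^{-1} r^q V_\omega$ for all $y \in \widehat Y$, $r \in (0,\mathrm{diam}(\widehat Y,\omega)]$. The last step is to identify the metric-measure completion $(\widehat Y,\omega,\omega^n)$ with $(\widehat{X'},\eta,\eta^n)$. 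The map $\pi: U_\omega \to U_\eta$ is a biholomorphism (shrinking $U_\eta$ if needed so that it avoids $\pi(E)$, which only removes a Zariski-closed set and does not change the completion up to isometry, by the remark in the excerpt that $(\widehat X,\omega,\omega^n)$ is independent of the choice of the Zariski-dense open set), and $\pi^*\eta|_{U_\omega} = \omega|_{U_\omega}$ as K\"ahler forms, so $\pi$ is an isometry $(U_\omega,\omega)\to(U_\eta,\eta)$ carrying $\omega^n$ to $\eta^n$. Isometries of dense open subsets extend uniquely to isometries of completions, so $(\widehat Y,\omega)\cong(\widehat{X'},\eta)$ as metric spaces, compatibly with the measures. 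Transporting the diameter bound and the volume lower bound through this isometry yields the Corollary with $C = \max(C_0, C_0)$ and the same $q$.

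The main obstacle I expect is the uniform entropy estimate on the resolution, i.e. controlling $\mathcal N_{Y,\theta_Y,p_0}(\pi^*\eta)$ by $\mathcal N_{X',\omega_{X'},p_0}(\eta)$ plus a universal constant. The delicate point is that the Radon--Nikodym density acquires the factor $1/\mathrm{Jac}(\pi)$, which blows up along the exceptional divisor, so one must verify that this extra factor contributes only a bounded amount to the $L^1\log L^{p_0}$-norm — this uses that $\mathrm{Jac}(\pi)$ is (locally, in suitable coordinates) a monomial so that $\log\mathrm{Jac}(\pi)$ is in $L^{p_0'}(\theta_Y^n)$ for every $p_0'<\infty$, combined with an elementary inequality of the form $a|\log(ab)|^{p_0} \le C_{p_0}\big(a|\log a|^{p_0} + a|\log b|^{p_0}\big)$ and H\"older to handle the second term against the probability measure $V_\eta^{-1}\eta^n$. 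Everything else — the cohomological bound via the projection formula, and the isometry of completions — is routine given the statements already in the excerpt.
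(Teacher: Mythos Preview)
Your overall strategy---pull $\eta$ back to a fixed resolution $\pi:Y\to X'$, verify that $\pi^*\eta\in\tilde{\mathcal V}(Y,\theta_Y,n,A',p_0,K')$ for uniform $A',K'$, apply Theorem~\ref{the-main-univolume}, and identify the metric completions via the isometry $\pi|_{U_\omega}$---is exactly the paper's. But both of your substantive steps contain errors.

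\textbf{Entropy bound.} You have the Jacobian inverted. Writing $\pi^*\omega_{X'}^n = g\,\theta_Y^n$ (not the other way around), the function $g$ is smooth, nonnegative, and \emph{bounded} on $Y$: it vanishes along the exceptional divisor rather than blowing up. Hence the density of $V_\omega^{-1}\omega^n$ with respect to $\theta_Y^n$ is $\tilde f=(f\circ\pi)\,g$, which is $f_1 g$ with $f_1:=f\circ\pi$ and $0\le g\le M:=\|g\|_{L^\infty}+1$. Your ``main obstacle''---a factor $1/\mathrm{Jac}(\pi)$ that blows up---does not occur. The paper's argument is then a two-line splitting: on $\{f_1 g\le M\}$ the integrand is bounded by $M(\log M)^{p_0}$, while on $\{f_1 g>M\}$ one has $f_1>1$ and $|\log(f_1 g)|\lesssim|\log f_1|$, so
\[
\int_{\{f_1 g>M\}} f_1 g\,|\log(f_1 g)|^{p_0}\,\theta_Y^n \;\lesssim\; \int_Y f_1|\log f_1|^{p_0}\,\pi^*\omega_{X'}^n \;=\;\int_{X'} f|\log f|^{p_0}\,\omega_{X'}^n \;\le\; K.
\]
No H\"older inequality or integrability of $|\log\mathrm{Jac}|^{p_0}$ is needed.

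\textbf{Cohomology bound.} Your projection-formula argument is not correct as written: $\pi_*(\theta_Y^{n-1})$ is a closed positive $(n-1,n-1)$-current on $X'$, but it is \emph{not} bounded by $C\,\omega_{X'}^{n-1}$ pointwise---it concentrates mass along the blow-up centers. The paper instead uses that $\{\pi^*\omega_{X'}\}$ is big: there is a K\"ahler current $T\in\{\pi^*\omega_{X'}\}$ with $T\ge\delta\,\theta_Y$ for some fixed $\delta>0$, and monotonicity of non-pluripolar products gives
\[
A \;\ge\; \int_{X'}\eta\wedge\omega_{X'}^{n-1}
\;=\;\int_Y \pi^*\eta\wedge\pi^*\omega_{X'}^{n-1}
\;\ge\;\int_Y\langle\pi^*\eta\wedge T^{n-1}\rangle
\;\ge\;\delta^{n-1}\int_Y\pi^*\eta\wedge\theta_Y^{n-1},
\]
so $A'=\delta^{-(n-1)}A$ works. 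This is the step you are missing.
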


To explain our third main result, we observe that by fixing a basis $\alpha_1, \ldots, \alpha_{n_0}$ of $H^{1,1}(X,\R)$ (where $n_0:= \dim H^{1,1}(X,\R)$) we can identify $H^{1,1}(X,\R)$ with $\R^{n_0}$ via an obvious isomorphism. We endow $H^{1,1}(X,\R)$ with the Euclidean norm $\| \cdot \|$ on $\R^{n_0}$. Hence for each $\alpha \in H^{1,1}(X,\R)$ we get a smooth closed form $\theta_\alpha \in \alpha$ with $\|\theta_\alpha\|_{\mathcal{C}^k} \le C_k \|\alpha\|$, for every $k \in \N$,  where the constant $C_k$ depends only on $k, n_0$. Let $A_1>0$ be a constant and let  $\mathcal{M}(X,A_1)$ be the set of $\omega \in \mathcal{K}(X)$ such that  $\omega= \theta_\alpha+ \ddc \varphi_\omega$ for some $\theta_\alpha$-psh function with  $\|\varphi_\omega\|_{L^\infty(X)} \le A_1$ and $\|\alpha\| \le A_1$, where $\alpha$ is the cohomology class of $\omega$.  It is well-known that for every $A>0,K>0$, $p_0>n$,  there exists a constant $A_1$ such that 
$$\mathcal{V}(X,\omega_X,n,A,p_0,K) \subset \mathcal{M}(X,A_1).$$
It is difficult to expect that one can obtain similar geometric estimates for metrics in $\mathcal{M}(X,A_1)$. However, motivated by \cite[Theorem 1.1]{Guo-Song-localnoncollapsing} (which can be viewed as an analogue of Perelman's $\kappa$-noncollapsing theorem), we introduce the following class: for $x_0 \in X, R_0\in (0,3]$ (one can replace $3$ by any finite positive number), we  define $\mathcal{M}(X,A_1,x_0, R_0,p_0,K)$ to be the subclass of $\mathcal{M}(X,A_1)$ consisting of $\omega$ so that $V^{-1}_\omega\omega^n= f \omega_X^n$ with $\|f\|_{L^1\log L^{p_0}(B_\omega(x_0, R_0))} \le K$. The aim here is that we will try to control the metric properties of $\omega$ when $\omega^n$ has only fine regularity on the geodesic ball $B_\omega(x_0, R_0)$.  Observe that
$$\mathcal{V}(X,\omega_X,n,A,p_0,K) \subset \mathcal{M}(X,A_1,x_0, R_0,p_0,K).$$ 
Here is our third main result. 

\begin{theorem} \label{the-volumelocal} Let $A_1,x_0,R_0,p_0$ be as above. Then there exist constants $q>0, C_1>0$ independent of $x_0,R_0$ (but depending on $A_1,p_0,\omega_X$) so that 
$$\frac{vol_\omega(B_\omega(x_0,r))}{r^q V_\omega} \ge C_1$$
for every $\omega \in \mathcal{M}(X,A_1,x_0, R_0,p_0,K)$ and $r\in (0, R_0/2]$.
\end{theorem}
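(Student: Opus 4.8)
The plan is to run the Guo-Phong-Song-Sturm auxiliary Monge-Amp\`ere machinery, but \emph{localized} to the geodesic ball $B_\omega(x_0,R_0)$, and then convert an $L^\infty$-estimate for an auxiliary potential into the desired volume lower bound via a standard capacity/Alexandrov-type argument. First I would fix $\omega \in \mathcal{M}(X,A_1,x_0,R_0,p_0,K)$, write $\omega = \theta_\alpha + \ddc\varphi_\omega$ with $\|\varphi_\omega\|_{L^\infty} \le A_1$, $\|\alpha\| \le A_1$, and $V_\omega^{-1}\omega^n = f\,\omega_X^n$ with $\|f\|_{L^1\log L^{p_0}(B_\omega(x_0,R_0))} \le K$. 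For $r \in (0, R_0/2]$ and $x_0$ as the center, consider the sublevel-set function associated to the distance: on the ball $B := B_\omega(x_0, R_0/2) \supset B_\omega(x_0,r)$ one solves, for a suitable parameter, an auxiliary complex Monge-Amp\`ere equation of the form $(\theta_\alpha + \ddc \psi)^n = c\, e^{-\lambda \mathbf{1}_{B_\omega(x_0,r)}}\, f\, \omega_X^n$ (normalized so the total masses match), with $\psi$ a bounded $\theta_\alpha$-psh function; existence and an $L^\infty$ bound for $\psi$ follow from Ko\l{}odziej-type estimates since $e^{-\lambda \mathbf{1}}f \in L^1\log L^{p_0}$ with controlled norm — here the key point is that because the bad set where $f$ may be small or irregular is confined to $B_\omega(x_0,R_0)$, the $L^1\log L^{p_0}$ norm of the right-hand side is bounded in terms of $K$, $A_1$, $p_0$, $\omega_X$ only, \emph{uniformly in $x_0$ and $R_0$}.

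The second step is the comparison argument. Using the partial comparison principle for Monge-Amp\`ere masses together with the fact that $\omega = \theta_\alpha + \ddc\varphi_\omega$ itself satisfies $(\theta_\alpha + \ddc\varphi_\omega)^n = V_\omega f\,\omega_X^n$, one compares $\psi$ and $\varphi_\omega$ on the region $\{\psi < \varphi_\omega + s\}$ for an appropriate threshold $s$ depending on $r$; this is precisely the mechanism by which the volume $\mathrm{vol}_\omega(B_\omega(x_0,r)) = \int_{B_\omega(x_0,r)} \omega^n$ enters, since the exponential weight $e^{-\lambda \mathbf{1}_{B_\omega(x_0,r)}}$ was designed so that the Monge-Amp\`ere mass deficit it creates is controlled by that volume. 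Choosing $\lambda \sim -\log\big(r^{-q}\mathrm{vol}_\omega(B_\omega(x_0,r))/V_\omega\big)$ and feeding the uniform $L^\infty$ bound on $\psi$ back into the inequality, one gets an a priori bound of the shape $\mathrm{vol}_\omega(B_\omega(x_0,r)) \ge C_1 r^q V_\omega$, with $q$ and $C_1$ depending only on $n, p_0, A_1, \omega_X$. One must also use that $\psi - \varphi_\omega$ controls the distance: the sublevel sets $\{\psi < \varphi_\omega - t\}$ are contained in $B_\omega(x_0, \mathrm{const}\cdot t^{1/2})$ or similar, which is where the geodesic-ball geometry (as opposed to a coordinate ball) is used, exactly as in \cite{Guo-Phong-Song-Sturm}.

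The main obstacle I expect is twofold. First, making the localization genuinely uniform in $x_0$ and $R_0$: the Ko\l{}odziej estimate and the Sobolev-type inequalities underlying the $L^\infty$ bound are global on $X$, so one needs to check carefully that restricting the $L^1\log L^{p_0}$ control of $f$ to $B_\omega(x_0,R_0)$ (with no control on $f$ outside) still suffices — this works because outside the ball one only needs $\omega^n$ to be a positive measure in the fixed cohomology class $\alpha$ with $\|\alpha\| \le A_1$, whose total mass and hence integrability of $\log f$ against $\omega^n$ itself is automatic, but the bookkeeping is delicate. Second, and more seriously, one must relate the auxiliary potential's sublevel sets to \emph{metric} balls $B_\omega(x_0,r)$ rather than to the ambient geometry of $(X,\omega_X)$; this requires the quantitative distance-distortion estimate that is the technical heart of \cite{Guo-Phong-Song-Sturm} (their Lemma controlling $d_\omega$ in terms of an auxiliary $L^\infty$ quantity), adapted to hold only on $B_\omega(x_0,R_0)$. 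Once those two points are in place, the rest is a routine iteration on dyadic scales $r, r/2, r/4,\ldots$ down from $R_0/2$, combining the one-step estimate to get the stated power law.
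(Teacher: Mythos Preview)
Your proposal has a genuine gap at the very first step, and the paper's route is quite different.

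\textbf{The gap.} You propose to solve $(\theta_\alpha+\ddc\psi)^n = c\,e^{-\lambda \mathbf{1}_{B_\omega(x_0,r)}}\, f\,\omega_X^n$ and apply a Ko{\l}odziej-type $L^\infty$ estimate. But that estimate requires the density $c\,e^{-\lambda \mathbf{1}}\,f$ to lie in $L^1\log L^{p_0}(X)$ with a \emph{uniform} bound, and outside $B_\omega(x_0,R_0)$ that density equals $c f$, for which no $L^1\log L^{p_0}$ control whatsoever is assumed. Your remark that ``outside the ball one only needs $\omega^n$ to be a positive measure \ldots\ whose total mass \ldots\ is automatic'' does not repair this: bounded total mass gives no control on $\int_X f|\log f|^{p_0}\omega_X^n$, so the $L^\infty$ bound on $\psi$ you need simply does not follow. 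The second difficulty you flag --- showing that sublevel sets $\{\psi<\varphi_\omega - t\}$ are contained in geodesic balls of controlled radius --- is not a lemma in \cite{Guo-Phong-Song-Sturm}; their auxiliary equations are built from the distance function itself, not from indicator functions of balls, and there is no such inclusion available in your setup.

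\textbf{What the paper actually does.} The globalization is handled differently: one takes a cut-off $\chi$ supported in $B_\omega(x_0,2R_0/3)$ with $\chi\equiv 1$ on $B_\omega(x_0,R_0/2)$, sets $f_1:=\chi f$, and defines $\tilde\omega$ cohomologous to $\omega$ by
\[
V_\omega^{-1}\tilde\omega^n \;=\; f_1\,\omega_X^n \;+\; V_{\omega_X}^{-1}\Big(1-\textstyle\int_X f_1\,\omega_X^n\Big)\,\omega_X^n.
\]
Now the right-hand side has \emph{globally} uniform $L^1\log L^{p_0}$ norm (outside the ball it is a constant, inside it is $f_1+\mathrm{const}$), so $\tilde\omega$ lies in the good family on which the Sobolev inequality of \cite{Guo-Phong-Song-Sturm2} applies; moreover $\tilde\omega^n\ge\omega^n$ on $B_\omega(x_0,R_0/2)$ and $\tilde\omega-\omega=\ddc\psi$ with $\|\psi\|_{L^\infty}$ bounded uniformly. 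No comparison principle or sublevel-set analysis is used. Instead one applies the Sobolev inequality (Lemma~\ref{le-sobolevchoetaepsilonphay}) to the truncated distance functions $u_r:=\rho(d_\omega(x_0,\cdot)/r)$, and the key energy estimate (Proposition~\ref{pro-sobo-etatoTsuadi}) --- an integration-by-parts inequality in the Dinh--Sibony--Sobolev space --- to convert $\int u_r^{p-2}\,\omega\wedge\tilde\omega^{n-1}$ into $\int u_r^{p-2n}\,\omega^n$. Combining these yields
\[
\Big(V_\omega^{-1}\textstyle\int_X u_r^{(4n+1)\beta}\omega^n\Big)^{1/\beta}\;\le\; C\,r^{-4n}\,V_\omega^{-1}\textstyle\int_X u_r\,\omega^n,
\]
which gives $\big(\mathrm{vol}_\omega B_\omega(x_0,r/2)/V_\omega\big)^{1/\beta}\le C\,r^{-4n}\,\mathrm{vol}_\omega B_\omega(x_0,r)/V_\omega$, and a dyadic iteration as in \cite{Guo-Phong-Song-Sturm2} finishes the proof. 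The ingredients you would need to make your sketch work --- a uniform $L^\infty$ estimate with only local $L^1\log L^{p_0}$ control, and a potential-to-distance inclusion --- are neither available nor used.
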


A closer look at the proof of \cite[Theorem 1.1]{Guo-Song-localnoncollapsing} shows that the arguments in \cite{Guo-Song-localnoncollapsing} provides an analogous lower bound as in Theorem \ref{the-volumelocal} if we assume additionally that the Ricci curvature of $\omega$ on $B_\omega(x_0,R_0)$ is bounded from below uniformly. 
We can in fact enlarge $\mathcal{M}(X,A_1,x_0, R_0,p_0,K)$ a bit more to contain $\tilde{\mathcal{V}}(X,\omega_X,n,A,p_0,K)$ and Theorem \ref{the-volumelocal} still hold. But to simplify the presentation, we only state the above version of Theorem \ref{the-volumelocal}.  We obtain the following consequence.

\begin{corollary} \label{cor-volumelocal} Let $A_1,p_0,K$ be as above. Let $U$ be an open subset in $X$. Let $$\mathcal{M}(X,A_1,p_0,K,U)$$ be the subclass of $\mathcal{M}(X,A_1)$ consisting of $\omega$ so that $V^{-1}_\omega \omega^n= f \omega_X^n$ with $\|f\|_{L^1\log L^{p_0}(U)} \le K$.  Then there exist  constants $q>0,$ $C>0$ so that 
$$ \dist_\omega(x, \partial U) \le C, \quad \frac{vol_\omega(B_\omega(x,r))}{r^q V_\omega} \ge C^{-1}$$
for every $\omega \in \mathcal{M}(X,A_1,p_0,K,U)$, $x \in U$, $r\in (0, \frac{1}{2}\dist_\omega(x, \partial U)]$, where 
$\dist_\omega(x, \partial U)$ denotes the distance from $x$ to $\partial U$ in $X$.
\end{corollary}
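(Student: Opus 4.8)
The plan is to deduce Corollary~\ref{cor-volumelocal} from Theorem~\ref{the-volumelocal} by a simple localization/covering argument. Fix $\omega \in \mathcal{M}(X,A_1,p_0,K,U)$ and a point $x \in U$, and set $d:= \dist_\omega(x,\partial U)$. The first task is the diameter-type bound $\dist_\omega(x,\partial U) \le C$. For this I would argue by contradiction: if $d$ were very large, then the ball $B_\omega(x, R_0)$ with $R_0:=3$ (the maximal allowed value) would be contained in $U$, so that $\|f\|_{L^1\log L^{p_0}(B_\omega(x,R_0))} \le \|f\|_{L^1\log L^{p_0}(U)} \le K$; hence $\omega \in \mathcal{M}(X,A_1,x,R_0,p_0,K)$ and Theorem~\ref{the-volumelocal} gives $\vol_\omega(B_\omega(x,r)) \ge C_1 r^q V_\omega$ for all $r \le R_0/2 = 3/2$. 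On the other hand, one can chain this local non-collapsing along a near-minimizing geodesic from $x$ towards $\partial U$: covering a segment of length $L \le d$ by roughly $L$ disjoint balls of fixed radius $1/2$, each contributing volume $\gtrsim V_\omega$, forces $L \lesssim 1$, i.e.\ $d$ is bounded by a universal constant. (Alternatively, and more cleanly, one invokes the already-established global diameter bound: since $\omega \in \mathcal{V}$-type families are contained in $\mathcal{M}(X,A_1)$ with bounded potentials, $\omega$ itself need not have finite diameter a priori, but the chaining argument is self-contained and avoids circularity.) This yields the constant $C$ with $\dist_\omega(x,\partial U)\le C$.

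For the volume lower bound, the key observation is that for $x\in U$ and $r \le \tfrac12 \dist_\omega(x,\partial U)$, the ball $B_\omega(x, 2r)$ is contained in $U$, so again $\|f\|_{L^1\log L^{p_0}(B_\omega(x,2r))}\le K$. Now I would apply Theorem~\ref{the-volumelocal} with the choice $x_0 := x$ and $R_0 := \min(2r, 3)$. Since $r \le \tfrac12 \dist_\omega(x,\partial U) \le \tfrac{C}{2}$, one has $2r \le C$, and after shrinking $r$ to $r' := \min(r, 3/2)$ if necessary (or rescaling the statement), the hypotheses of Theorem~\ref{the-volumelocal} are met: $\omega \in \mathcal{M}(X,A_1,x_0,R_0,p_0,K)$ and $r \le R_0/2$. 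The conclusion $\vol_\omega(B_\omega(x_0,r)) \ge C_1 r^q V_\omega$ is exactly the desired bound, with $C := \max(C_1^{-1}, C_{\mathrm{diam}})$ where $C_{\mathrm{diam}}$ is the constant from the first part; the constants $q, C$ depend only on $A_1, p_0, K, \omega_X$ and not on $U$, precisely because the constants in Theorem~\ref{the-volumelocal} were asserted to be independent of $x_0, R_0$. One should also handle the borderline regime $3/2 < r \le \tfrac12\dist_\omega(x,\partial U)$, where $r$ exceeds $R_0/2$; here one uses monotonicity $\vol_\omega(B_\omega(x,r)) \ge \vol_\omega(B_\omega(x,3/2)) \ge C_1 (3/2)^q V_\omega$ together with $r \le C/2$, so that $\vol_\omega(B_\omega(x,r))/(r^q V_\omega) \ge C_1 (3/2)^q / (C/2)^q =: C^{-1}$, again a universal constant.

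The main obstacle — or rather the point requiring genuine care — is the first part, establishing that $\dist_\omega(x,\partial U)$ is uniformly bounded. Unlike the global Theorem~\ref{the-main-univolume}, here we have no a priori control on the diameter of $(X,\omega)$ for $\omega \in \mathcal{M}(X,A_1)$ (the family $\mathcal{M}(X,A_1)$ is genuinely too large for a diameter bound, as noted in the text), so the bound on $\dist_\omega(x,\partial U)$ must come entirely from the local non-collapsing inside $U$. The chaining argument must be set up so that the balls used stay inside $U$: along a geodesic $\gamma$ from $x$ with $\gamma(0)=x$, the point $\gamma(t)$ satisfies $\dist_\omega(\gamma(t),\partial U)\ge \dist_\omega(x,\partial U) - t$, which stays $\ge 1$ as long as $t \le d-1$; on this range one places points $\gamma(t_i)$ with $t_{i+1}-t_i = 1$, and the balls $B_\omega(\gamma(t_i), 1/2)$ are pairwise disjoint, contained in $U$, and each has $\omega^n$-volume $\ge C_1 2^{-q} V_\omega$ by Theorem~\ref{the-volumelocal} applied at $x_0=\gamma(t_i)$, $R_0 = \min(2\dist_\omega(\gamma(t_i),\partial U),3) \ge 1$ (so $r=1/2 \le R_0/2$). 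Summing over the $\sim (d-1)$ indices forces $(d-1)\, C_1 2^{-q} \le 1$ since the total volume is $V_\omega = \int_X \omega^n$, whence $d \le 1 + C_1^{-1} 2^q =: C$. Once this is in place, the rest is the routine bookkeeping described above.
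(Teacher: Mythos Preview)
Your proposal is correct and follows essentially the same approach as the paper. Both arguments deduce the distance bound $\dist_\omega(x,\partial U)\le C$ by packing disjoint balls inside $U$, applying Theorem~\ref{the-volumelocal} at each center to get a uniform volume lower bound, and comparing the total to $V_\omega$; the paper places centers in successive annuli $B_\omega(x,3j+1)\setminus B_\omega(x,3j)$ while you place them along a minimizing geodesic, but the mechanism is identical. The paper then simply writes ``the desired assertion follows'' for the volume estimate, whereas you spell out the application of Theorem~\ref{the-volumelocal} with $R_0=\min(2r,3)$ and treat the borderline range $r>3/2$ via monotonicity and the already established bound on $\dist_\omega(x,\partial U)$ --- this extra bookkeeping is correct and indeed needed.
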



We now comment on proofs of our main results. If one admits \cite{GuedjTo-diameter,GPSS_bodieukien}, then the proof of Theorem \ref{the-main-univolume} is quite short, and follows from a convergence property of Monge-Amp\`ere operators of bounded potentials, see Lemma \ref{le-hoitulientuc} below. We however present a proof of Theorem \ref{the-main-univolume} using only results from \cite{Guo-Phong-Song-Sturm,Guo-Phong-Song-Sturm2}. The advantage is that the proof of Theorem \ref{the-volumelocal} is done in a similar manner. The strategy consists of two ingredients. The first one is the fundamental Sobolev inequality proved recently in \cite{Guo-Phong-Song-Sturm2}. The second one is a crucial energy estimate (Proposition \ref{pro-sobo-etatoT} below) for functions in the Dinh-Sibony-Sobolev space. In order to obtain such an estimate, we will develop further the calculus on that space initiated in \cite{DS_decay,Dinh-Ma-Marinescu,Vigny-Vu-Lebesgue}, especially, we establish an integration by parts formula; see Theorem \ref{th-integrationbyparts} below. \\

\noindent
\textbf{Acknowledgments.} We thank Bin Guo for numerous fruitful discussions. Thanks are also due to Duc-Bao Nguyen and Valentino Tosatti for their comments on the first draft of the paper. This work is partially supported by the Deutsche Forschungsgemeinschaft (DFG, German Research Foundation)-Projektnummer 500055552 and by the ANR-DFG grant QuaSiDy, grant no ANR-21-CE40-0016.

\section{Dinh-Sibony-Sobolev space}

Recall that $d=\partial + \bar{\partial}$ and $d^c:=\frac{i}{2 \pi} (\bar \partial -\partial)$. Let $U$ be a bounded open subset in $\C^n$ and let $\omega_{\C^n}$ be the standard K\"ahler form on $\C^n$.
Let $W^{1,2}(U)$ be the Sobolev space of function $u$ in $L^2(U)$ such that $\nabla u \in L^2(U)$. Let $W^{1,2}_*(U)$ be the subspace of $W^{1,2}(U)$ consisting of $u \in  W^{1,2}(U)$ such that  there exists a positive closed current $T$ of bidegree $(1,1)$ and of finite mass (i.e., $\int_U T \wedge \omega_{\C^n}^{n-1} < \infty$) on $U$ such that  
\begin{equation}\label{bound_by_currentlocal}
d u \wedge d^c u \leq T.
\end{equation}
We note that $du \wedge \dc u = \frac{i}{\pi} \partial u \wedge \bar \partial u$.  When $n=1$, then $W^{1,2}_*(U)=W^{1,2}(U)$ since $d u \wedge d^c u$  is already a positive measure. The space $W^{1,2}_*(U)$ is called \emph{Dinh-Sibony-Sobolev space}. It was also known under the name ``complex Sobolev space".   
The space $W^{1,2}_*(U)$ was introduced in \cite{DS_decay}, see also \cite{Vigny} for further properties  and \cite{DoNguyenW12} for a higher version of this space. We refer to \cite{DLW,DLW2,Vigny_expo-decay-birational,Vu_nonkahler_topo_degree} for applications of this space to dynamics; see also \cite{DinhMarinescuVu,DKC_Holder-Sobolev} for recent applications in Monge-Amp\`ere equations.

The space $W^{1,2}_*(U)$ is a Banach space endowed with the norm
\[ \|u\|_*^2= \int_U |u|^2 \omega_{\C^n}^{n} +  \inf \big\{ \int_U T \wedge \omega_{\C^n}^{n-1}\big\} \]    
where the infimum is taken over all closed positive currents $T$ of bidegree $(1,1)$ satisfying \eqref{bound_by_currentlocal}; see \cite[Proposition 1]{Vigny}. Observe that $\|u\|_* \ge c_n \|u\|_{W^{1,2}(U)}$, where the latter is the usual norm on $W^{1,2}(U)$, and $c_n>0$ is a dimensional constant. When $U$ is a (connected) bounded open set with $\mathcal{C}^1$ boundary, one can use the following variant of $*$-norm:
$$\|u\|'_*:= \|u\|_{L^1(U)}+ \inf \big\{ \bigg(\int_U T \wedge \omega_{\C^n}^{n-1}\bigg)^{1/2}\big\}$$
as in \cite{Vigny-Vu-Lebesgue}. In this case, by Poincar\'e's inequality, the norms $\|\cdot\|'_*$ and $\|\cdot\|_*$ are equivalent. Working with $U$ having $\mathcal{C}^1$ boundary puts no restriction in practice as long as results in consideration are locally in $U$ (as it it the case in \cite{DinhMarinescuVu,Vigny-Vu-Lebesgue} and in this paper).

We recall that if $u,v \in W^{1,2}_*(U)$, then $\max\{u,v\}$ and $\min\{u,v\}$ are also in $W^{1,2}_*(U)$; in particular, $\max\{u,0\}, \min\{u,0\}$, $|u|$ are in $W^{1,2}_*(U)$ (see \cite[Proposition 4.1]{DS_decay}). A function $u$ in $W^{1,2}_*(U)$ is a priori not well-defined pointwise. However, there are good representatives for $u$ defined as follows (see \cite{DinhMarinescuVu} and \cite{Vigny-Vu-Lebesgue}).

Let $u \in W^{1,2}_*(U)$.   
Fix a smooth radial cut-off  function $\chi$ on $\C^n$ such that $\chi$ is compactly supported, $0 \le \chi \le 1$, and $\int_{\C^n} \chi \omega^n =1$. Let 
 $$u_\epsilon(z):= \epsilon^{-2n}\int_{\C^n} u(z-x) \chi(x/\epsilon) \omega^n,$$
for $\epsilon \in (0,1]$. We call $u_\epsilon$ \emph{the standard regularisation} of $u$. For every open subset $U' \Subset U$, we have $u_\epsilon \to u$ in $W^{1,2}(U')$.  A Borel function $u': U \to \R$ is said to be \emph{a good representative of $u$} if $u'= u$ almost everywhere and if for every standard regularisation $(u_\epsilon)_\epsilon$, we have that  $u_\epsilon$ converges pointwise to $u'$ as $\epsilon \to 0$ outside some pluripolar set in $U$. Good representatives alway exist and differ from each other by a pluripolar set (see \cite[Theorem 1.1]{Vigny-Vu-Lebesgue} and \cite[Theorem 2.10]{DinhMarinescuVu}).

Let $v_1, \ldots, v_n$ be bounded psh functions on $\Omega$ and define $\mu:= \ddc v_1 \wedge \cdots \wedge \ddc v_n$. Such a measure $\mu$ is called a Monge-Amp\`ere measure of bounded potentials.  Note that $\mu$ is a positive measure  having no mass on  pluripolar sets because the capacity of every pluripolar set is zero. Using good representations, we can integrate any nonnegative $u\in W^{1,2}_*(U)$ against $\mu$ by putting $\langle \mu, u\rangle:= \langle \mu, u' \rangle$, where $u'$ is a good representative of $u$. One can see that this definition is independent of the choice of $u'$.  More generally, we can define in the same way $\langle \mu, \phi(u)\rangle$ for any positive Borel function $\phi$ defined everywhere on $\R.$  Every function in $W^{1,2}_*(U)$ is locally square integrable with respect to Monge-Amp\`ere measures of bounded potentials; see \cite[Proposition 2.13]{DinhMarinescuVu}.

There is a global version of $W^{1,2}_*(U)$ as follows. Let $(X, \omega_X)$ be a compact Hermitian manifold. Let $W^{1,2}(X)$ be the Sobolev space of Borel measurable functions $u$ on $X$ such that $u$ and $\nabla u$ are  square integrable functions (with respect to $\omega_X^n$).  Let $W^{1,2}_*(X)$ be the subset of $W^{1,2}(X)$ consisting of functions $u$ satisfying that  there exists a positive closed current $T$ of bidegree $(1,1)$ on $X$ such that  
\begin{equation}\label{bound_by_current}
d u \wedge d^c u \leq T.
\end{equation}
Observe that if $U$ is a local chart on $X$ such that the closure of $U$ is contained in a bigger local chart biholomorphic to a bounded open subset in $\C^n$, then $W^{1,2}_*(X) \subset W^{1,2}_*(U)$.
The space $W^{1,2}_*(X)$  enjoys similar properties as its local counterpart. First it is a Banach space endowed with the norm
\[ \|u\|_*^2= \int_X |u|^2 \omega_X^{n} +  \inf \big\{ \int_X T \wedge \omega_X^{n-1}\big\} \]    
where the infimum is taken over all closed positive currents $T$ of bidegree $(1,1)$ satisfying \eqref{bound_by_current}. Secondly, for $u,v \in W^{1,2}_*(X)$, we have that $\max\{u,v\}, \min\{u,v\}$ also belong to $W^{1,2}_*(X)$. The notion of good representatives extends naturally to $W^{1,2}_*(X)$, and every element in $W^{1,2}_*(X)$ possesses good representatives as in the  local setting (as a Borel function from $X$ to $\R$).

\subsection{Quasi-continuity}

Recall that for every Borel set $E$ in $U$, the relative capacity of $E$ (in $U$) is defined by  
$$\capK(E,U):= \sup \big\{\int_E (\ddc v)^n: 0 \le v \le 1, \, v \in \PSH(U) \big\},$$ 
where $\PSH(U)$ denotes the set of psh functions on $U$. 

\begin{proposition}\label{pro-quasiprojective} (\cite[Theorem 2.10]{DinhMarinescuVu})
Let $u \in W^{1,2}_*(U)$. Then for every $\epsilon>0$, there exists an open subset $U'$ in $U$ such that $u$ is continuous on $U \backslash U'$ and $\capK(U',U) \le \epsilon$.
\end{proposition}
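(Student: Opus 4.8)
The statement is quasi-continuity of functions in the Dinh--Sibony--Sobolev space $W^{1,2}_*(U)$ with respect to the relative capacity $\capK(\cdot,U)$. Since the result is quoted from \cite{DinhMarinescuVu}, the plan is to reconstruct its proof along the following lines. The starting point is the standard regularisation $u_\epsilon$ of $u$ and the theory of good representatives recalled above: for every $U' \Subset U$ we have $u_\epsilon \to u$ in $W^{1,2}(U')$, and there is a good representative $u'$ of $u$ to which $u_\epsilon$ converges pointwise outside a pluripolar set. The first step would be to upgrade the $W^{1,2}$-convergence to a quantitative convergence statement measured by capacity: namely, to show that $\capK\big(\{|u_\epsilon - u_\delta| > t\}, U''\big)$ can be controlled, for $U'' \Subset U' \Subset U$, by a power of $t^{-1}$ times a quantity tending to $0$ as $\epsilon,\delta \to 0$. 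The natural tool here is a Chebyshev-type inequality for the relative capacity: for a difference $w = u_\epsilon - u_\delta$ one estimates $\capK(\{|w|>t\},U'')$ by testing against competitors $v \in \PSH(U'')$ with $0 \le v \le 1$, integrating $(\ddc v)^n$ over the sublevel set, and bounding this via the energy $\int du_\epsilon \wedge d^c u_\epsilon$ (or the controlling current $T$) using the definition of $W^{1,2}_*$ together with an integration-by-parts / Cauchy--Schwarz argument against the Monge--Amp\`ere measure $(\ddc v)^n$ of bounded potentials. This is where the distinguishing feature of $W^{1,2}_*(U)$ (as opposed to plain $W^{1,2}$) is essential: the current bound $du \wedge d^c u \le T$ is precisely what lets one pair $du \wedge d^c u$ with $(\ddc v)^{n-1}$.

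With such a capacity estimate in hand, the second step is a routine Borel--Cantelli / telescoping argument. Pick a sequence $\epsilon_j \downarrow 0$ fast enough that, after choosing exhausting open sets $U_j \Subset U$, the capacities $\capK\big(\{|u_{\epsilon_{j+1}} - u_{\epsilon_j}| > 2^{-j}\}, U_j\big)$ are summable; then the union $U'$ of these sublevel sets, together with a small neighbourhood of $U \setminus \bigcup U_j$ (whose capacity can also be made arbitrarily small since $\capK(\cdot,U)$ is outer-regular and exhausts), has $\capK(U',U) \le \epsilon$, and on $U \setminus U'$ the sequence $u_{\epsilon_j}$ converges uniformly. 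A uniform limit of continuous functions on $U \setminus U'$ is continuous, and this limit agrees with the good representative $u'$ off a pluripolar set; absorbing that pluripolar set into $U'$ (pluripolar sets have zero relative capacity, hence are contained in open sets of arbitrarily small capacity) completes the argument. One should take a little care that $u'$ is defined as a Borel function on all of $U$, so that "continuous on $U \setminus U'$" is a statement about the actual representative, not just about the $u_{\epsilon_j}$.

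The main obstacle is clearly the first step: establishing the capacity estimate for $\capK(\{|u_\epsilon - u_\delta|>t\}, U'')$ in terms of the $*$-norm of $u_\epsilon - u_\delta$ on a slightly larger set. This requires an integration-by-parts argument valid for functions in $W^{1,2}_*$ paired against Monge--Amp\`ere measures of bounded potentials, which in turn relies on the fact (recalled in the excerpt, \cite[Proposition 2.13]{DinhMarinescuVu}) that every element of $W^{1,2}_*(U)$ is locally square-integrable with respect to such measures, together with a cut-off function to localise from $U''$ to $U'$ and handle boundary terms. Once this Chebyshev-type inequality for capacity is secured, the remainder is standard measure-theoretic bookkeeping. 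An alternative, more economical route, if one is willing to invoke it as a black box, is simply to cite \cite[Theorem 2.10]{DinhMarinescuVu} directly; but the reconstruction above indicates how the proof goes and shows that only tools already recalled in this section are needed.
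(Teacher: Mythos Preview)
The paper does not prove this proposition; it is simply quoted as \cite[Theorem~2.10]{DinhMarinescuVu} without argument. Your reconstruction is a reasonable sketch of how such a proof goes, and the ingredients you name (standard regularisations, nice convergence, good representatives, a Chebyshev-type capacity bound, then telescoping to uniform convergence off a set of small capacity) are the right ones.

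One point deserves sharpening. In your first step you say the capacity of $\{|u_\epsilon - u_\delta| > t\}$ is controlled by a power of $t^{-1}$ times ``the energy $\int du_\epsilon \wedge d^c u_\epsilon$ (or the controlling current $T$)''. But that energy is merely \emph{bounded} as $\epsilon,\delta \to 0$, not small, so as written your Chebyshev inequality does not force the capacity to zero. What actually tends to zero is the $L^1$ norm of $u_\epsilon - u_\delta$ against \emph{every} Monge--Amp\`ere measure $(\ddc v_1)\wedge\cdots\wedge(\ddc v_n)$ with $0 \le v_j \le 1$, uniformly in the $v_j$: this is exactly the content of \cite[Proposition~2.13]{DinhMarinescuVu}, restated in the present paper as Proposition~\ref{p:W12-MA-general}, applied to the nicely convergent sequence of standard regularisations. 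Once you have
\[
\sup_{0\le v_j\le 1}\int_K |u_\epsilon - u_\delta|\,\ddc v_1\wedge\cdots\wedge\ddc v_n \;\longrightarrow\; 0,
\]
Chebyshev gives $\capK\big(\{|u_\epsilon - u_\delta|>t\}\cap K,\,U\big) \le t^{-1}$ times the left-hand side, hence tends to zero; and since $u_\epsilon, u_\delta$ are smooth these sublevel sets are already open. With this correction your telescoping/Borel--Cantelli step and the absorption of the residual pluripolar set go through as you describe.
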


\begin{lemma}\label{le-chuanLpcuauWsaodia} (\cite[Lemma 2.5]{Vigny-Vu-Lebesgue})  Let $C \ge 1,p \ge 1$ be constants.
Let $u \in W^{1,2}_*(U)$ with $\|u\|_* \le C$ such that there is a bounded psh function $0 \le \psi_1 \le C$ on $U$ satisfying $du \wedge \dc u \le \ddc \psi_1$. Let  $\psi_2$ be a psh function on $U$ with  $0 \le \psi_2 \le C$ and $\eta:= \ddc \psi_2$. Then, for every compact subset $E$ in $U$, there exists a constant $C'>0$ depending only on $p,E,C$ such that 
$$\int_E |u|^p \eta^n \le C'.$$
\end{lemma}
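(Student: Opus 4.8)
The plan is to reduce the estimate, by an induction using integration by parts, to the already-available local square-integrability of $W^{1,2}_*$-functions against Monge--Amp\`ere measures of bounded potentials.

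\emph{Reductions.} Since $d|u|\wedge d^c|u|=du\wedge d^c u$ almost everywhere, $|u|$ again lies in $W^{1,2}_*(U)$ with the same $*$-norm bound and satisfies $d|u|\wedge d^c|u|\le\ddc\psi_1$; so we may assume $u\ge0$. Next I would reduce to $u$ smooth and bounded: fix $E\Subset U_1\Subset U$ and let $u_\epsilon,\psi_{1,\epsilon},\psi_{2,\epsilon}$ be standard regularisations on $U_1$. Applying Jensen's inequality to the convex map $\xi\mapsto(\xi_j\bar\xi_k)_{j,k}$ shows that $du_\epsilon\wedge d^c u_\epsilon$ is dominated by the convolution of $du\wedge d^c u$ with the mollifying kernel, hence by $\ddc\psi_{1,\epsilon}$, while $0\le\psi_{i,\epsilon}\le C$ and $\psi_{i,\epsilon}\searrow\psi_i$. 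Granting the estimate for smooth bounded data, apply it on $U_1$ to $(u_\epsilon,\psi_{1,\epsilon},\psi_{2,\epsilon})$ and let $\epsilon\to0$: by Bedford--Taylor, $(\ddc\psi_{2,\epsilon})^n\to\eta^n$ weakly, and a good representative $u'$ of $u$ is, by quasi-continuity (Proposition \ref{pro-quasiprojective}), a locally quasi-uniform limit of the $u_\epsilon$ outside sets of arbitrarily small relative capacity; since $\eta^n$ --- like every Monge--Amp\`ere measure of bounded potentials --- is dominated by the relative capacity, these exceptional sets carry no $\eta^n$-mass, and a standard lower-semicontinuity argument yields $\int_E(u')^p\,\eta^n\le\liminf_\epsilon\int_E u_\epsilon^p(\ddc\psi_{2,\epsilon})^n\le C'$.

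\emph{The smooth case.} It is convenient to prove the stronger statement: for every compact $E\Subset U$ there is $C'=C'(p,E,C,n)$ with $\int_E u^p\,\ddc\phi_1\wedge\cdots\wedge\ddc\phi_n\le C'$ for all smooth bounded $u\ge0$ with $du\wedge d^c u\le\ddc\psi_1$, $0\le\psi_1\le C$, and all smooth psh $\phi_\ell$ with $0\le\phi_\ell\le C$; one also allows $\phi_\ell=|z-z_0|^2$, bounded on the relevant relatively compact set, so that $\eta^n$ in the Lemma is a particular case. The proof is an induction on $\lfloor p\rfloor$. For $p\le2$ one writes, by H\"older, $\int_E u^p\mu\le\big(\int_E u^2\mu\big)^{p/2}\mu(E)^{1-p/2}$, where $\mu(E)$ is bounded by a Chern--Levine--Nirenberg constant depending only on $C,E$, and $\int_E u^2\mu\le C'$ with $C'=C'(\|u\|_*,C,E)$ is the quantitative form of the local square-integrability of $W^{1,2}_*$-functions against Monge--Amp\`ere measures of bounded potentials (cf. \cite{DinhMarinescuVu,Vigny-Vu-Lebesgue}). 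For $p>2$, fix $\chi\ge0$ smooth with $\chi\equiv1$ near $E$ and $\operatorname{supp}\chi\Subset U_1\Subset U$, write $\mu=\ddc\phi_1\wedge R$, and integrate by parts: $\int\chi u^p\,\ddc\phi_1\wedge R=\int\phi_1\,\ddc(\chi u^p)\wedge R$. In the expansion of $\ddc(\chi u^p)$ the term $p(p-1)\chi u^{p-2}\,du\wedge d^c u\wedge R$ is $\le p(p-1)\chi u^{p-2}\,\ddc\psi_1\wedge R$, of strictly smaller power, hence handled by induction; the first-order cross terms $p u^{p-1}(d\chi\wedge d^c u+du\wedge d^c\chi)\wedge R$, supported in $\operatorname{supp}d\chi$, are estimated by Cauchy--Schwarz for positive forms ($i\gamma\wedge\bar\gamma\wedge R\ge0$) with weight $\lambda=u$ by a constant multiple of $\int(u^{p-2}\,d\chi\wedge d^c\chi+u^p\,du\wedge d^c u)\wedge R$, whose first piece has smaller power and whose second is $\le u^p\,\ddc\psi_1\wedge R$ (same power, but with one factor fixed to be $\ddc\psi_1$); and the term $p\chi u^{p-1}\,\ddc u\wedge R$ is integrated by parts once more, $\int\phi_1\chi u^{p-1}\,\ddc u\wedge R=\int u\,\ddc(\phi_1\chi u^{p-1})\wedge R$, and expanded, producing only terms of the types already listed. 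After a careful but elementary bookkeeping --- keeping track of the power of $u$ and of the number of factors of the measure that have been fixed equal to $\ddc\psi_1$ (so that the only remaining cutoff remainders live on $\operatorname{supp}d\chi$ and, once no $\ddc u$ survives, a further integration by parts against $\psi_1$ lowers the power by $2$) --- every contribution reduces to the base case, completing the induction.

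\emph{Main obstacle.} The genuinely delicate points are this bookkeeping and the limiting argument: one must organise the integrations by parts so that each $\ddc u$-term, which is not sign-definite even in the smooth case and is meaningless for general $u\in W^{1,2}_*$, is re-integrated into strictly simpler terms, and so that the cutoff remainders --- which are of the same power but supported only on $\operatorname{supp}d\chi$ --- do not obstruct termination; and in the reduction step one needs that the small-capacity exceptional sets of the quasi-continuous representative are uniformly $\eta^n$-negligible. Once the statement holds for arbitrary Monge--Amp\`ere measures of bounded potentials it applies to $\eta^n=(\ddc\psi_2)^n$, which is the assertion of the Lemma.
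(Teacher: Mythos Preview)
The paper does not prove this lemma; it merely records it as \cite[Lemma 2.5]{Vigny-Vu-Lebesgue}. So there is nothing to compare against, and I assess your argument on its own merits.

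Your overall plan --- reduce to smooth bounded $u\ge 0$ by regularisation, then integrate by parts and induct on $\lfloor p\rfloor$, using the known uniform $L^2$-bound against Monge--Amp\`ere measures of bounded potentials as the base case --- is a reasonable strategy and can be made to work, but the write-up has a genuine gap. In the Leibniz expansion of $\ddc(\chi u^p)$ you list three terms: $p(p-1)\chi u^{p-2}\,du\wedge d^c u$, the cross terms $pu^{p-1}(d\chi\wedge d^c u+du\wedge d^c\chi)$, and $p\chi u^{p-1}\ddc u$. You omit the fourth term $u^{p}\,\ddc\chi$. After multiplying by $\phi_1$ and integrating, this contributes $\int \phi_1 u^p\,\ddc\chi\wedge R$, which is controlled by $C\int_{\operatorname{supp}\chi} u^p\,\omega_{\C^n}\wedge R$; this has the \emph{same} power $p$ and, since $|z|^2$ is one of your admissible potentials, is of exactly the type you are trying to bound, only over a larger compact set. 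Your induction as stated does not terminate on this term. A second, related issue: re-integrating $\int\phi_1\chi u^{p-1}\ddc u\wedge R$ by parts as $\int u\,\ddc(\phi_1\chi u^{p-1})\wedge R$ reproduces, among other things, $(p-1)\int\phi_1\chi u^{p-1}\ddc u\wedge R$ (from $\ddc(u^{p-1})$) and $\int \chi u^p\,\ddc\phi_1\wedge R$ (from $\ddc(\phi_1\chi)$); you have to absorb the first into the left side and deal with the second, which is again the original integral. The phrase ``careful but elementary bookkeeping'' does not cover either point.

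Both issues are reparable. For the first, one can set up a genuine double induction: outer on $\lfloor p/2\rfloor$, inner on the number of ``free'' factors $\ddc\phi_\ell$ (those not already equal to $\ddc|z-z_0|^2$ or $\ddc\psi_1$); then the omitted term decreases the inner index by one, and the inner base case (all factors equal to $\omega_{\C^n}$) is a Lebesgue integral, bounded uniformly by the Moser--Trudinger inequality for $W^{1,2}_*$ (cf.\ \cite{DinhMarinescuVu,Vigny}). For the second, one solves for $\int\phi_1\chi u^{p-1}\ddc u\wedge R$ using the coefficient $(p-1)\neq 1$ when $p>2$, and tracks the remaining same-power term through the inner induction. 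Alternatively, and this is closer in spirit to the cited reference, one bypasses the inductive integration-by-parts scheme entirely by proving directly an exponential integrability $\int_E e^{c|u|}\,\eta^n\le C'$ for small $c>0$, from which the $L^p$-bound for every $p$ is immediate. Either route works, but your sketch, as written, does not close.
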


Consequently, we obtain

\begin{lemma}\label{le-hoitulientucdia} Let $\eta$ be a closed positive current of bounded potentials and $(\eta_k)_k$ be a sequence of closed positive $(1,1)$-currents such that 

(i) $\eta_k^n$ converges weakly to $\eta^n$ on $U$ as $k \to \infty$,

(ii) we can write $\eta_k= \ddc \psi_k$, where  $\|\psi_k\|_{L^\infty}$  are bounded uniformly in $k$.

Let $u \in W^{1,2}_*(U)$  such that $u \ge 0$ and there is a closed positive current $T$ of bounded potentials such that $du \wedge \dc u \le T$. Let $\chi$ be a smooth function compactly supported on $U$. Then for every constant $p \ge 0$, we have
$$\int_U \chi u^p \eta_k^n \to \int_U \chi u^p \eta^n$$
as $k \to \infty$.
\end{lemma}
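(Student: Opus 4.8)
The plan is to reduce the statement for general $p\ge 0$ to the case of bounded continuous functions, using the integrability estimates of Lemma \ref{le-chuanLpcuauWsaodia} (to control the contribution of the set where $u$ is large) together with the quasi-continuity of $u$ from Proposition \ref{pro-quasiprojective} (to control the contribution of the set where $u$ is badly behaved). Fix the smooth cut-off $\chi$ with compact support $E\Subset U$, and fix $p\ge 0$. First I would record two uniform bounds: by hypothesis (ii) the potentials $\psi_k$ are uniformly bounded, so after normalization we may assume $0\le\psi_k\le C$ on a neighborhood of $E$; since $\eta_k^n\to\eta^n$ weakly and all these measures have uniformly bounded mass on $E$, and since $u\in W^{1,2}_*(U)$ with $du\wedge\dc u\le T$ of bounded potentials, Lemma \ref{le-chuanLpcuauWsaodia} applies uniformly to give $\int_E u^{p'}\eta_k^n\le C'$ for any fixed exponent $p'$ — in particular for $p'=p$ and for $p'=2p$ — with $C'$ independent of $k$, and likewise $\int_E u^{p'}\eta^n\le C'$.

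Next I would truncate. For $M>0$ set $u_M:=\min\{u,M\}$, which is again in $W^{1,2}_*(U)$, is bounded, and agrees with $u$ on $\{u\le M\}$. On the bad set $\{u>M\}$ we estimate, by Chebyshev (using the $L^{2p}$-bound for $p>0$, or the $L^1$-bound applied to $u$ itself for a crude measure bound when $p=0$ the statement is trivial),
\[
\int_E \chi\,|u^p-u_M^p|\,\eta_k^n \le \int_{E\cap\{u>M\}} u^p\,\eta_k^n \le M^{-p}\int_E u^{2p}\,\eta_k^n \le C' M^{-p},
\]
and the same bound with $\eta_k$ replaced by $\eta$; for $p=0$ there is nothing to prove. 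So it suffices to prove the convergence with $u^p$ replaced by $u_M^p$, i.e. for a bounded quasi-continuous function. Now I would use Proposition \ref{pro-quasiprojective}: given $\delta>0$ there is an open $U'\subset U$ with $\capK(U',U)\le\delta$ and $u$ (hence $u_M^p$) continuous on $U\setminus U'$. By Tietze extend $u_M^p\big|_{U\setminus U'}$ to a continuous $g$ on $U$ with $0\le g\le M^p$; then $u_M^p-g$ is supported in $U'$ and bounded by $M^p$ in absolute value. Since the $\eta_k$ and $\eta$ have bounded potentials with a uniform bound, the comparison principle for the capacity gives $\eta_k^n(U'\cap E)\le C''\capK(U',U)\le C''\delta$ uniformly in $k$, and the same for $\eta^n$; hence $\int_E\chi\,|u_M^p-g|\,\eta_k^n\le C''M^p\delta$ and likewise for $\eta$. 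Finally $\int_U\chi g\,\eta_k^n\to\int_U\chi g\,\eta^n$ because $\chi g$ is a fixed continuous compactly supported function and $\eta_k^n\to\eta^n$ weakly. Combining: $\limsup_k\big|\int\chi u^p\eta_k^n-\int\chi u^p\eta^n\big|\le 2C'M^{-p}+2C''M^p\delta$ for every $M,\delta$; sending $\delta\to0$ then $M\to\infty$ finishes the proof.

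The main obstacle, and the point requiring care, is the uniform-in-$k$ estimate $\eta_k^n(U'\cap E)\le C''\capK(U',U)$: one needs that the weak-type bound of a Monge–Ampère measure by relative capacity holds with a constant depending only on the uniform $L^\infty$-bound for the potentials $\psi_k$ and on $E\Subset U$, not on $k$. This is standard pluripotential theory (a Chern–Levine–Nirenberg / comparison-principle estimate once the potentials are normalized to a fixed range on a fixed neighborhood of $E$), but it is the step that genuinely uses hypothesis (ii) rather than merely weak convergence, and it is what lets the "bad set" contributions be killed simultaneously for all $k$. A secondary technical point is making sure that the good-representative conventions are consistent: $\langle\eta_k^n,u^p\rangle$ and $\langle\eta^n,u^p\rangle$ are defined via good representatives of $u$, and truncation, the Tietze extension, and weak convergence must all be phrased at the level of these representatives — but since good representatives differ only on pluripolar sets and Monge–Ampère measures of bounded potentials charge no pluripolar set, this causes no difficulty.
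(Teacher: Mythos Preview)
Your proposal is correct and follows essentially the same approach as the paper: reduce to bounded $u$ by truncation (controlling the tail via Lemma~\ref{le-chuanLpcuauWsaodia}), then use quasi-continuity together with the uniform capacity bound on $\eta_k^n$ coming from hypothesis (ii) to approximate by a continuous function and invoke weak convergence. The only differences are cosmetic: the paper handles the bounded case first and truncates afterward, and in the Chebyshev step it uses $u^{p+1}$ (giving a $C/M$ bound) rather than your $u^{2p}$ (giving $C'/M^{p}$).
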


\proof First we assume $\|u\|_{L^\infty} \le C_0$ for some constant $C_0$. By hypothesis, for every continuous function $f$, there holds 
$$\int_U \chi f \eta_k^n \to \int_U \chi f \eta^n.$$
Recall that $u$ is quasi-continuous, i.e, for every constant $\epsilon>0$, there exists an open subset $U'$ in $U$ such that $\capK(U',U) \le \epsilon$ and $u$ is continuous on $U \backslash U'$. 
Extend $u|_{U \backslash U'}$ to a continuous function $u'$ on $U$. We have thus
$$\int_U \chi u'^p \eta_k^n \to \int_U \chi u'^p \eta^n.$$
On the other hand, one has
\begin{align*}
\int_U \chi |u^p- u'^p| \eta_k^n &\le \int_{U'\cap \supp \chi} |u^p| \eta_k^n + \|u'\|^p_{L^\infty(\supp \chi)} \int_{U'} \eta_k^n \\
&\lesssim   \capK(U',U) \lesssim \epsilon.
\end{align*}
A similar estimate also holds for $\eta$ in place of $\eta_k$. It follows that 
$$\int_U \chi u^p \eta_k^n \to \int_U \chi u^p \eta^n$$
as desired if $u$ is bounded. 

Now let $M \ge 1$ be a constant and $0 \le u_M:= \min \{u, M\} \le M$. By the first part of the proof, one gets
$$\int_U \chi u_M^p \eta_k^n \to \int_U \chi u_M^p \eta^n.$$
Now compare 
$$\int_U \chi |u_M^p - u^p| \eta_k^n = \int_{\{u \ge M\}}\chi u^p \eta_k^n \le M^{-1}\int_U \chi u^{p+1} \eta_k^n \le C_2/ M$$
for some constant $C_2>0$ independent of $k$ by Lemma \ref{le-chuanLpcuauWsaodia}. A similar estimate holds for $\eta$ in place of $\eta_k$. The desired estimate thus follows by taking $M \to \infty$. 
\endproof

Let $(X,\omega_X)$ be a compact K\"ahler manifold of dimension $n$. By Lemmas \ref{le-chuanLpcuauWsaodia} and  \ref{le-hoitulientucdia}, we obtain immediately the following analogous estimates in the global setting.

\begin{lemma}\label{le-chuanLpcuauWsao}  Let $C \ge 1,p \ge 1$ be constants.
Let $u \in W^{1,2}_*(X)$ with $\|u\|_* \le C$ such that there is a closed positive current $T$ of bounded potentials such that $du \wedge \dc u \le T$, and $T= \theta_1+ \ddc\psi_1$ for some  smooth $\theta_1$ with $\|\theta_1\|_{\mathcal{C}^2} \le C$, and $0 \le \psi_1 \le C$. Let $\eta$ be a closed positive $(1,1)$-current on $X$ such that we can write $\eta= \theta_2+ \ddc \psi_2$, for some  smooth $\theta_2$ with $\|\theta_1\|_{\mathcal{C}^2} \le C$, and $0 \le \psi_2 \le C$. Then, there exists a constant $C'>0$ depending only on $p,X,C$ such that 
$$\int_X |u|^p \eta^n \le C'.$$
\end{lemma}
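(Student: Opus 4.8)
The plan is to reduce the global statement to the local one (Lemma \ref{le-chuanLpcuauWsaodia}) via a partition of unity. First I would cover $X$ by finitely many coordinate charts $(U_j)_{1 \le j \le N}$, chosen so that each closure $\overline{U_j}$ is contained in a larger chart $\widetilde U_j$ biholomorphic to a bounded open subset of $\C^n$; by the observation recorded just before \eqref{bound_by_current}, the restriction of $u$ to $\widetilde U_j$ lies in $W^{1,2}_*(\widetilde U_j)$ with $\|u\|_{*, \widetilde U_j}$ controlled by $\|u\|_* \le C$ and a constant depending only on the chart data, i.e.\ on $X$. Fix a partition of unity $(\rho_j)$ subordinate to $(U_j)$, so that $\int_X |u|^p \eta^n = \sum_j \int_{U_j} \rho_j |u|^p \eta^n \le \sum_j \int_{E_j} |u|^p \eta^n$ where $E_j := \supp \rho_j$ is a compact subset of $U_j \Subset \widetilde U_j$.

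The issue is that Lemma \ref{le-chuanLpcuauWsaodia} requires the \emph{currents}, not the forms, to dominate: it asks for a bounded psh $\psi_1$ with $du \wedge \dc u \le \ddc \psi_1$ on $\widetilde U_j$, and a bounded psh $\psi_2$ with $\eta = \ddc \psi_2$ there. So the second step is to convert the hypotheses $T = \theta_1 + \ddc \psi_1$ and $\eta = \theta_2 + \ddc \psi_2$ with $\theta_i$ smooth and $\psi_i$ bounded into genuine local Monge-Amp\`ere representations. Shrinking $\widetilde U_j$ if necessary (and using $\|\theta_i\|_{\mathcal{C}^2} \le C$), pick a smooth strictly psh function $g_j$ on $\widetilde U_j$ with $\ddc g_j \ge |\theta_i|$ for $i=1,2$, with $\|g_j\|_{L^\infty}$ bounded in terms of $C$ and the chart. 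Then on $\widetilde U_j$ we have $du \wedge \dc u \le T = \theta_1 + \ddc \psi_1 \le \ddc(g_j + \psi_1)$, and $g_j + \psi_1$ is psh and bounded; similarly $\eta = \theta_2 + \ddc \psi_2 \le \ddc(g_j + \psi_2) =: \ddc \widetilde\psi_{2,j}$ with $\widetilde\psi_{2,j}$ psh and bounded. Since $\eta \ge 0$ and $\eta \le \ddc \widetilde\psi_{2,j}$ as currents, we get $\eta^n \le (\ddc \widetilde\psi_{2,j})^n$ as measures on $\widetilde U_j$ by monotonicity of the mixed Monge-Amp\`ere operator for bounded psh functions (comparing $\eta = \ddc(\widetilde\psi_{2,j} - h)$ for an auxiliary psh $h$, or simply invoking the standard inequality $0 \le S \le \ddc\varphi \Rightarrow S^n \le (\ddc\varphi)^n$ for closed positive $(1,1)$-currents $S$ with bounded potential). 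Hence $\int_{E_j} |u|^p \eta^n \le \int_{E_j} |u|^p (\ddc \widetilde\psi_{2,j})^n$.

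The third step is to apply Lemma \ref{le-chuanLpcuauWsaodia} on $\widetilde U_j$ with $u|_{\widetilde U_j}$, the psh function $g_j + \psi_1$ playing the role of $\psi_1$, the psh function $\widetilde\psi_{2,j}$ playing the role of $\psi_2$, and compact set $E_j$: this yields $\int_{E_j} |u|^p (\ddc \widetilde\psi_{2,j})^n \le C'_j$ with $C'_j$ depending only on $p$, $E_j$ and the uniform bound (hence only on $p$, $X$, $C$). Summing over the finitely many $j$ gives $\int_X |u|^p \eta^n \le \sum_j C'_j =: C'$ with $C'$ depending only on $p$, $X$, $C$, as claimed. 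I expect the main obstacle to be the passage from form-domination to current-domination in step two — specifically, checking that one can uniformly absorb the smooth (possibly non-positive) parts $\theta_i$ into bounded psh functions on slightly shrunk charts, and justifying $\eta^n \le (\ddc\widetilde\psi_{2,j})^n$ as measures; everything else is a routine partition-of-unity bookkeeping argument invoking the already-proven local Lemma \ref{le-chuanLpcuauWsaodia} together with Lemma \ref{le-hoitulientucdia}.
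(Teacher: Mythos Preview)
Your proof is correct and follows the same strategy as the paper: cover $X$ by finitely many charts relatively compact in larger charts, and apply the local Lemma~\ref{le-chuanLpcuauWsaodia} on each. The only difference is that your step two is more elaborate than necessary. Since $\eta = \theta_2 + \ddc\psi_2$ is a closed \emph{positive} current, on any contractible chart you can write $\theta_2 = \ddc h_j$ for a smooth $h_j$ with $\|h_j\|_{L^\infty}$ controlled by $\|\theta_2\|_{\mathcal{C}^2} \le C$ and the chart data, and then $\eta = \ddc(h_j + \psi_2)$ as an \emph{equality}, with $h_j + \psi_2$ automatically psh (because $\ddc(h_j+\psi_2)=\eta\ge 0$) and bounded by a constant depending only on $C$ and the chart; the same works for $T = \ddc v_1$. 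This is exactly what the paper does, and it lets you feed $\eta$ directly into Lemma~\ref{le-chuanLpcuauWsaodia} without introducing an auxiliary strictly psh $g_j$ or invoking the monotonicity inequality $\eta^n \le (\ddc\widetilde\psi_{2,j})^n$.
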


\proof   We cover $X$ by local charts $U$ such that $U$ is a relatively compact subset in a bigger local chart, and $U$ is biholomorphic to a ball in $\C^n$  on which we can write $T= \ddc v_1$, $\eta= \ddc v_2$ for psh functions $v_1,v_2$ on $U$ with $0 \le v_1, v_2 \le C_1$ for some constant $C_1>0$ depending only on $C,U$, here we have used hypothesis on $\theta_j, \psi_j$. Now the desired estimate follows from  Lemma \ref{le-chuanLpcuauWsaodia}. 
\endproof

\begin{lemma}\label{le-hoitulientuc} Let $\eta$ be a closed positive current of bounded potentials and $(\eta_k)_k$ be a sequence of closed positive $(1,1)$-currents  on $X$ such that 

(i) $\eta_k^n$ converges weakly to $\eta^n$ as $k \to \infty$,

(ii) we can write $\eta_k= \theta_k + \ddc \psi_k$, where $\theta_k$ is smooth and $\|\theta_k\|_{\mathcal{C}^2}, \|\psi_k\|_{L^\infty}$  are bounded uniformly in $k$.

Let $u \in W^{1,2}_*(X)$  such that $u \ge 0$ and there is a closed positive current $T$ of bounded potentials such that $du \wedge \dc u \le T$. Then for every constant $p \ge 0$, we have
$$\int_X u^p \eta_k^n \to \int_X u^p \eta^n$$
as $k \to \infty$.
\end{lemma}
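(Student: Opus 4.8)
The plan is to reduce this global statement to its local counterpart, Lemma \ref{le-hoitulientucdia}, by a partition of unity, exactly in the way Lemma \ref{le-chuanLpcuauWsao} was deduced from Lemma \ref{le-chuanLpcuauWsaodia}.

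First I would cover $X$ by finitely many coordinate charts $U_1, \dots, U_N$, each relatively compact inside a larger chart biholomorphic to a ball in $\C^n$, and fix a smooth partition of unity $\chi_1, \dots, \chi_N$ subordinate to this cover, so that $\chi_i \ge 0$, $\supp \chi_i \subset U_i$ and $\sum_i \chi_i \equiv 1$. On each $U_i$ I would express the relevant currents through local potentials: since $\theta_k$ is smooth with $\|\theta_k\|_{\mathcal{C}^2} \le C$, on $U_i$ one can write $\theta_k = \ddc g_{k,i}$ with $\|g_{k,i}\|_{L^\infty(U_i)}$ bounded uniformly in $k$ and $i$ (solve $\ddc g_{k,i}=\theta_k$ on a fixed ball with a fixed solution operator). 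Hence $\eta_k = \ddc v_{k,i}$ on $U_i$ with $v_{k,i}:= g_{k,i}+\psi_k$ psh, and after adding a suitable bounded constant one gets $0 \le v_{k,i} \le C_1$ with $C_1$ independent of $k$. Likewise, since $\eta$ and $T$ have bounded potentials, $\eta = \ddc w_i$ and $T = \ddc s_i$ on $U_i$ for bounded psh functions $w_i, s_i$. Recalling that $W^{1,2}_*(X) \subset W^{1,2}_*(U_i)$, we have $u \in W^{1,2}_*(U_i)$, $u \ge 0$, and $du \wedge \dc u \le T = \ddc s_i$ on $U_i$.

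Next I would verify the hypotheses of Lemma \ref{le-hoitulientucdia} on each $U_i$, with $\ddc w_i$ in the role of $\eta$ and $\ddc v_{k,i}$ in the role of $\eta_k$. Condition (ii), the uniform $L^\infty$ bound on the potentials $v_{k,i}$, holds by construction. Condition (i), that $(\ddc v_{k,i})^n \to (\ddc w_i)^n$ weakly on $U_i$, follows from the global weak convergence $\eta_k^n \to \eta^n$: since $\eta_k = \ddc v_{k,i}$ and $\eta = \ddc w_i$ on $U_i$, we have $(\ddc v_{k,i})^n = \eta_k^n|_{U_i}$ and $(\ddc w_i)^n = \eta^n|_{U_i}$ as measures, and testing the global convergence against continuous functions compactly supported in $U_i$ gives precisely the weak convergence required in Lemma \ref{le-hoitulientucdia} (whose proof only tests against a compactly supported $\chi$). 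Applying Lemma \ref{le-hoitulientucdia} on $U_i$ with $\chi = \chi_i$ then yields $\int_{U_i} \chi_i u^p \eta_k^n \to \int_{U_i} \chi_i u^p \eta^n$ for each $i$.

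Finally, summing over the finitely many indices $i$ and using $\sum_i \chi_i \equiv 1$ gives $\int_X u^p \eta_k^n \to \int_X u^p \eta^n$, as claimed. The only point needing a little care — but not a genuine obstacle — is arranging the local potentials $v_{k,i}$ with $L^\infty$ bounds uniform in $k$, which is exactly where the hypothesis $\|\theta_k\|_{\mathcal{C}^2}\le\const$ enters; everything else is a routine globalization of the local lemma.
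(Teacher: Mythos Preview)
Your proposal is correct and follows exactly the approach the paper intends: the paper states that this lemma, like Lemma~\ref{le-chuanLpcuauWsao}, follows immediately from the local Lemmas~\ref{le-chuanLpcuauWsaodia} and~\ref{le-hoitulientucdia}, and your partition-of-unity reduction is precisely the globalization argument used in the proof of Lemma~\ref{le-chuanLpcuauWsao}. There is nothing to add.
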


\subsection{Integration by parts}

We develop now calculus with functions in $W^{1,2}_*(U)$, where $U$ is again a bounded open subset in $\C^n$.  
Let  $u_k \in W^{1,2}_*(U)$ for $k \in \N$ and $u \in W^{1,2}_*(U)$.
We say that $u_k \to u$ \emph{weakly} in $W^{1,2}_*$ if 
$u_k \to u$  in  the sense of distributions and $\|u_k\|_*$ is uniformly bounded.
By \cite{DinhMarinescuVu}, we say that $u_k \to u$ \emph{nicely}   
if $u_k \to u$ weakly in $W^{1,2}_*(U)$ and  for every $x \in U$, 
there exist an open neighbourhood $U_x$ of $x$ and a psh
function $\varphi_k$ on $U_x$ such that 
\begin{align*} 
d u_k \wedge \dc u_k \le \ddc \varphi_k
\end{align*}
 for every $k$ and  $\varphi_k$ decreases to some psh  function $\varphi$ on $U_x$. Observe that in this case, we also have $d u \wedge \dc u \le \ddc \varphi$ (see \cite[p. 251]{Vigny}).  By \cite[Corollary 2.11]{DinhMarinescuVu}, if $u_k \to u$ nicely, then $u_k \to u$ in capacity, where we have implicitly identified $u,u_k$ with their good representations. 
Let $(u_\epsilon)_\epsilon$ be standard regularisations of $u$.  For every open subset $U' \Subset U$, we have $u_\epsilon \to u$ in $W^{1,2}(U')$  and $u_\epsilon \to u$ nicely in $W^{1,2}_*(U')$. More precisely, if  $d u \wedge \dc u \le \ddc \varphi$, then $d u_\epsilon \wedge \dc u_\epsilon \le \ddc \varphi_\epsilon$, where $\varphi_\epsilon$ is the standard regularisation of $\varphi$ with the same cut off function as that in the definition of $u_\epsilon$ (see \cite[Lemma 5]{Vigny}).

The following fact will be very useful later.

\begin{proposition}\label{p:W12-MA-general}  (\cite[Proposition 2.13]{DinhMarinescuVu}) 
Let $K$ be a compact subset in $U$. Assume that $u_k \to u$ in $W^{1,2}_*(U)$ nicely. Then, we have
$$\lim_{k\to\infty} \sup_{v_1,\ldots, v_n}\int_K |u_k- u| \ddc v_1 \wedge \cdots  \wedge \ddc v_n =0,$$
where the supremum is taken over every psh function $v_1, \ldots, v_n$ on $U$ with $0 \le v_1,\ldots, v_n \le 1$. 
\end{proposition}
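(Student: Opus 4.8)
The plan is to reduce the statement to the case of smooth approximations and then pass to the limit using the uniform $L^p$ control coming from Lemma~\ref{le-chuanLpcuauWsaodia}. First I would observe that since the claim is local (we test against psh functions $v_1,\dots,v_n$ with $0\le v_j\le 1$ on $U$ and integrate over the fixed compact $K$), it suffices to prove it after shrinking $U$ a little, so I may assume that $d u_k\wedge \dc u_k\le \ddc\varphi_k$ on $U$ with $\varphi_k$ decreasing to a psh function $\varphi$, and likewise $d u\wedge \dc u\le\ddc\varphi$; after subtracting a pluriharmonic function and shifting, I can take all the $\varphi_k,\varphi$ to satisfy $0\le\varphi_k,\varphi\le C$ for a uniform constant $C$, and $\|u_k\|_*\le C$. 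I would also fix a slightly larger compact $K\Subset K'\Subset U$ and a cutoff $\chi$ equal to $1$ near $K$ and supported in $K'$, so that $\int_K|u_k-u|\,\ddc v_1\wedge\cdots\wedge\ddc v_n\le \int_U \chi|u_k-u|\,\ddc v_1\wedge\cdots\wedge\ddc v_n$.

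Next I would handle the truncated functions. Fix $M\ge 1$ and set $u_k^M:=\max\{\min\{u_k,M\},-M\}$, $u^M:=\max\{\min\{u,M\},-M\}$; these are bounded, uniformly in $k$ and $M$ lie in $W^{1,2}_*$, and one checks $u_k^M\to u^M$ nicely as $k\to\infty$ (the same psh majorants $\ddc\varphi_k$ work for the truncations). For bounded functions converging in capacity, the convergence $\int_K|u_k^M-u^M|\,\ddc v_1\wedge\cdots\wedge\ddc v_n\to 0$ uniformly over the $v_j$ follows from the classical Bedford--Taylor machinery: writing $g:=|u_k^M-u^M|$, which is bounded by $2M$ and tends to $0$ in capacity, one has for any $\delta>0$ that $\{g>\delta\}$ has small capacity for $k$ large, hence $\int_{\{g>\delta\}}\ddc v_1\wedge\cdots\wedge\ddc v_n$ is small uniformly in the $v_j$, while on $\{g\le\delta\}$ the integrand is at most $\delta$ times a mass bounded uniformly in the $v_j$ (using $0\le v_j\le 1$ and integration over the fixed compact $K'$, which gives a uniform bound on $\int_{K'}\ddc v_1\wedge\cdots\wedge\ddc v_n$ by comparison with $(\ddc\|z\|^2)^n$ after rescaling). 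So this truncated piece is controlled.

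Finally I would remove the truncation using the uniform $L^{p}$ estimate. On the set $\{|u_k|>M\}$ one has $|u_k-u_k^M|\le |u_k|\le M^{-1}|u_k|^2$, so
\begin{align*}
\int_K |u_k-u_k^M|\,\ddc v_1\wedge\cdots\wedge\ddc v_n
&\le M^{-1}\int_K |u_k|^2\,\ddc v_1\wedge\cdots\wedge\ddc v_n.
\end{align*}
Applying Lemma~\ref{le-chuanLpcuauWsaodia} with $p=2$, $\eta=\ddc v_j$ (noting $0\le v_j\le 1$ and $d u_k\wedge\dc u_k\le\ddc\varphi_k\le\ddc(\sum_j\varphi_j)$... more simply, applying the lemma on each factor after the standard polarization/mixed-Monge--Amp\`ere reduction, or directly using that $\ddc v_1\wedge\cdots\wedge\ddc v_n\le$ a fixed reference mixed form up to symmetrization) yields $\int_{K}|u_k|^2\,\ddc v_1\wedge\cdots\wedge\ddc v_n\le C'$ with $C'$ depending only on $K$, $C$, and independent of $k$ and of the $v_j$. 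The same bound holds for $u$ in place of $u_k$. Therefore the tails are $\le C'/M$ uniformly, and combining with the truncated estimate, for every $\varepsilon>0$ we first choose $M$ with $2C'/M<\varepsilon/2$ and then $k$ large so that the truncated term is $<\varepsilon/2$; this gives the claim.

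The main obstacle I expect is the uniformity over all competitors $v_1,\dots,v_n$ simultaneously: one must be careful that every estimate (the mass bound $\int_{K'}\ddc v_1\wedge\cdots\wedge\ddc v_n\le C_K$, the smallness of $\int_{\{g>\delta\}}\ddc v_1\wedge\cdots\wedge\ddc v_n$, and the $L^2$ bound from Lemma~\ref{le-chuanLpcuauWsaodia}) is genuinely uniform over this family, which forces one to invoke capacity rather than any fixed measure, and to reduce products of distinct $\ddc v_j$ to Monge--Amp\`ere powers via the standard symmetrization $\ddc v_1\wedge\cdots\wedge\ddc v_n\le \tfrac1n\sum_j(\ddc v_j)^n$ is false in general — instead one uses that $v:=\tfrac1n(v_1+\cdots+v_n)$ satisfies $(\ddc v)^n\ge$ a constant multiple of $\ddc v_1\wedge\cdots\wedge\ddc v_n$ only after a different argument; the cleanest fix is to note all the cited results (quasicontinuity, Lemma~\ref{le-chuanLpcuauWsaodia}, capacity estimates) hold with $\ddc v_1\wedge\cdots\wedge\ddc v_n$ directly, since their proofs are of Bedford--Taylor type and work verbatim for mixed Monge--Amp\`ere operators with $0\le v_j\le 1$.
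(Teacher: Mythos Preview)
The paper does not actually give a proof here: it simply cites \cite[Proposition~2.13]{DinhMarinescuVu} and asserts that the argument there already yields the uniform statement with the supremum over all $v_1,\dots,v_n$. So there is nothing in the paper to compare your argument against line by line.

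Your strategy---truncate, use that nice convergence implies convergence in capacity to handle the bounded piece, and control the tails $\{|u_k|>M\}$ by a uniform $L^2$ bound against Monge--Amp\`ere measures---is the standard route and is essentially what the proof in \cite{DinhMarinescuVu} does. Two technical points deserve cleaning up. First, the passage from the mixed measure $\ddc v_1\wedge\cdots\wedge\ddc v_n$ to a pure power is fine once stated correctly: the multinomial expansion gives $n!\,\ddc v_1\wedge\cdots\wedge\ddc v_n\le(\ddc(v_1+\cdots+v_n))^n$ as positive $(n,n)$-currents, so with $w:=\tfrac1n(v_1+\cdots+v_n)$ one has $0\le w\le 1$ and both the capacity comparison and Lemma~\ref{le-chuanLpcuauWsaodia} apply to $(\ddc w)^n$ with a dimensional constant. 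Second, your normalization ``after subtracting a pluriharmonic function and shifting, $0\le\varphi_k,\varphi\le C$'' is not justified: a decreasing sequence of psh functions is uniformly bounded above on compacts but need not be bounded below (the limit $\varphi$ may take the value $-\infty$). Since Lemma~\ref{le-chuanLpcuauWsaodia} as stated requires a \emph{bounded} psh majorant $\psi_1$, you cannot invoke it verbatim. This is not a fatal gap: the uniform bound $\int_K|u_k|^2\,(\ddc w)^n\le C'$ in fact only needs $\|u_k\|_*\le C$ (this is precisely the $L^2$ integrability part of \cite[Proposition~2.13]{DinhMarinescuVu}, whose proof uses only the mass of the majorant, controlled here since the $\varphi_k$ are uniformly bounded in $L^1_{\rm loc}$ as they sit between $\varphi$ and $\varphi_1$). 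With that correction, or alternatively by invoking the uniform exponential estimate for $W^{1,2}_*$ functions against Monge--Amp\`ere measures, your argument goes through.
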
 

\proof The statement of \cite[Proposition 2.13]{DinhMarinescuVu} is  weaker than what we claim here. But the proof of \cite[Proposition 2.13]{DinhMarinescuVu} actually gives the desired limit.
\endproof

We also recall the following version of the Cauchy-Schwarz inequality. 

\begin{lemma} \label{le-cauchyschsmooth} Let $R_1,\ldots, R_{n-1}$ be closed positive $(1,1)$-currents of bounded potentials on $U$ and let $f,g$ be bounded Borel functions with compact support in $U$. Let $u,v$ be smooth real functions. Then for $R:= R_1 \wedge \cdots \wedge R_{n-1}$, we have
$$\bigg|\int_U fg d u \wedge \dc v \wedge R \bigg| \le \bigg( \int_U |f|^2 du \wedge \dc u \wedge R\bigg)^{1/2}\bigg(\int_U |g|^2 dv \wedge \dc v \wedge R \bigg)^{1/2}$$
\end{lemma}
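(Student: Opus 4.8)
The plan is to reduce the inequality to the classical Cauchy--Schwarz inequality in $L^2$ of an auxiliary positive measure, by a polarization argument carried out at the level of measures. Since $u,v$ are smooth and $R=R_1\wedge\cdots\wedge R_{n-1}$ is a closed positive $(n-1,n-1)$-current (well defined by Bedford--Taylor, as $R_1,\ldots,R_{n-1}$ have bounded potentials), the wedge products $\mu_{uu}:=du\wedge\dc u\wedge R$, $\mu_{vv}:=dv\wedge\dc v\wedge R$ and $\mu_{uv}:=du\wedge\dc v\wedge R$ are all well defined as currents of order zero; moreover $\mu_{uu}$ and $\mu_{vv}$ are positive measures because $du\wedge\dc u$ and $dv\wedge\dc v$ are smooth positive $(1,1)$-forms, while $\mu_{uv}$ is a real signed measure of locally finite total variation, and all three have finite mass on $\supp f\cup\supp g$. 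In this notation the claim reads $\big|\int_U fg\,d\mu_{uv}\big|\le\big(\int_U|f|^2\,d\mu_{uu}\big)^{1/2}\big(\int_U|g|^2\,d\mu_{vv}\big)^{1/2}$.

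First I would record the symmetry $\mu_{uv}=\mu_{vu}$, i.e. $du\wedge\dc v\wedge R=dv\wedge\dc u\wedge R$: expanding $\dc$ and discarding the $(2,0)$- and $(0,2)$-parts, which vanish after wedging with the $(n-1,n-1)$-current $R$ on the $n$-dimensional set $U$, both sides equal $\frac{i}{2\pi}\big(\partial u\wedge\bar\partial v-\bar\partial u\wedge\partial v\big)\wedge R$. Next, for every real $t$ the function $u+tv$ is smooth and real, so $d(u+tv)\wedge\dc(u+tv)$ is a positive $(1,1)$-form, and hence $d(u+tv)\wedge\dc(u+tv)\wedge R$ is a positive measure on $U$. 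Expanding by bilinearity of $(\alpha,\beta)\mapsto d\alpha\wedge\dc\beta$ and using the symmetry above, this yields the measure inequality $\mu_{uu}+2t\,\mu_{uv}+t^2\,\mu_{vv}\ge 0$ on $U$ for every $t\in\R$.

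Now I would introduce the positive measure $\sigma:=\mu_{uu}+\mu_{vv}$. Taking $t=\pm1$ gives $-\tfrac12\sigma\le\mu_{uv}\le\tfrac12\sigma$, so $\mu_{uu},\mu_{vv},\mu_{uv}$ are all absolutely continuous with respect to $\sigma$; write $d\mu_{uu}=a\,d\sigma$, $d\mu_{vv}=b\,d\sigma$, $d\mu_{uv}=h\,d\sigma$ with $a,b\ge0$, $a+b=1$ and $|h|\le\tfrac12$ holding $\sigma$-a.e. Applying the measure inequality for all rational $t$, and then all real $t$ by continuity of the polynomial, shows $a+2th+t^2b\ge0$ for all $t$, $\sigma$-a.e., whence the discriminant condition $h^2\le ab$ holds $\sigma$-a.e. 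Then, by the Cauchy--Schwarz inequality in $L^2(\sigma)$,
\[ \Big|\int_U fg\,d\mu_{uv}\Big|=\Big|\int_U fg\,h\,d\sigma\Big|\le\int_U\big(|f|\sqrt a\big)\big(|g|\sqrt b\big)\,d\sigma\le\Big(\int_U|f|^2a\,d\sigma\Big)^{1/2}\Big(\int_U|g|^2b\,d\sigma\Big)^{1/2}, \]
and the right-hand side equals $\big(\int_U|f|^2\,d\mu_{uu}\big)^{1/2}\big(\int_U|g|^2\,d\mu_{vv}\big)^{1/2}$, which is the assertion.

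The argument is essentially routine; the only point requiring a little care is the passage from ``$a+2th+t^2b\ge0$ as measures for each fixed $t$'' to the pointwise discriminant inequality ``$h^2\le ab$ holds $\sigma$-a.e.\ on $U$'', which is handled by working over a countable dense set of parameters $t$ and using continuity in $t$. One should also note that the finiteness of $\int_U|f|^2\,d\mu_{uu}$ and $\int_U|g|^2\,d\mu_{vv}$, hence the meaningfulness of the right-hand side, follows at once from $f,g$ being bounded with compact support together with the local finiteness of the mass of $R$.
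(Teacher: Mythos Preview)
Your proof is correct. The polarization argument at the level of measures---using $d(u+tv)\wedge\dc(u+tv)\wedge R\ge 0$ for all real $t$, passing to Radon--Nikodym densities with respect to $\sigma=\mu_{uu}+\mu_{vv}$, and extracting the pointwise discriminant inequality $h^2\le ab$---is a valid and self-contained route to the inequality.

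The paper's proof takes a shorter, more algebraic path: it sets $\alpha:=f\partial u$ and $\beta:=\bar g\,\partial v$, viewed as bounded $(1,0)$-forms, observes that (up to a constant) $fg\,du\wedge\dc v\wedge R$, $|f|^2\,du\wedge\dc u\wedge R$, $|g|^2\,dv\wedge\dc v\wedge R$ are respectively $i\alpha\wedge\bar\beta\wedge R$, $i\alpha\wedge\bar\alpha\wedge R$, $i\beta\wedge\bar\beta\wedge R$, and then appeals directly to the Cauchy--Schwarz inequality for the positive semidefinite Hermitian form $(\alpha,\beta)\mapsto\int_U i\,\alpha\wedge\bar\beta\wedge R$ on $(1,0)$-forms. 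This avoids the Radon--Nikodym step and the countable-parameter argument entirely, and makes transparent why boundedness of $f,g$ is all that is needed. Your approach, by contrast, is purely measure-theoretic and never mentions Hermitian forms on $(1,0)$-forms; it is slightly longer but arguably more elementary, and it makes explicit the pointwise inequality $h^2\le ab$ between the densities, which is a marginally stronger statement than the integrated inequality.
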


\proof
Let $\alpha:=f\partial u, \beta:= \overline g \partial v$ which are bounded $(1,0)$-forms. Observe that $fg du \wedge \dc v \wedge R=\frac{i}{\pi} \alpha \wedge \overline \beta \wedge R$, similar equalities also hold for $du \wedge \dc u$ and $dv \wedge \dc v$ instead of $du \wedge \dc v$. Thus the desired inequality follows from the standard Cauchy-Schwarz inequality applied to $\alpha, \beta$.
\endproof

Let $1 \le m \le n-1$ be an integer, and let  $R= R_1 \wedge \cdots \wedge R_m$, where $R_1,\ldots,R_m$ are closed positive $(1,1)$-currents of bounded potentials on $U$. Let $u \in W^{1,2}_*(U)$. We define $du \wedge R$ as follows. For every smooth form $\Phi$ of compact support in $U$, we put 
$$\langle d u \wedge R, \Phi \rangle:= -\int_U u d \Phi \wedge R.$$
One sees that $du \wedge R$ is a well-defined current on $U$. We define also $\partial u \wedge R$ to be the $(m+1,m)$-current such that $\langle \partial u \wedge R, \Phi\rangle = \langle d u \wedge R, \Phi \rangle$ for every smooth $(n-m-1,n-m)$-form $\Phi$. The current $\bar \partial u \wedge R$ is defined similarly. By a bi-degree reason, one gets $du \wedge R= \partial u \wedge R+ \bar \partial u \wedge R$.

\begin{lemma} \label{le-uknicelyuchico1duy} Let $(u_k)_k$ be a sequence in $W^{1,2}_*(U)$ converging nicely to $u$ in $W^{1,2}_*(U)$. Then, we have  
\begin{align} \label{ine-ukunicelychico1du}
d u_k \wedge R \to du \wedge R 
 \end{align}
 weakly as currents. Similar convergences hold for $\partial u, \bar \partial u$ instead of $d u$. In particular, the currents $du \wedge R, \partial u \wedge R, \bar \partial u \wedge R$ are of order $0$. 
 \end{lemma}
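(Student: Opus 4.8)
The plan is to prove the weak convergence $du_k\wedge R \to du\wedge R$ by testing against an arbitrary smooth compactly supported form $\Phi$ and reducing to the defining formula $\langle du_k\wedge R,\Phi\rangle = -\int_U u_k\, d\Phi\wedge R$. Since $d\Phi$ is a smooth form whose coefficients can be bounded, and $R=R_1\wedge\cdots\wedge R_m$ with $1\le m\le n-1$, the wedge product $d\Phi\wedge R$ is (componentwise) a combination of terms of the shape $(\text{smooth function})\cdot R\wedge(\text{powers of }\omega_{\C^n})$; after shrinking to a slightly smaller open set containing $\supp\Phi$ and using a partition of unity, it suffices to show that $\int_K \psi\, u_k\, \ddc v_1\wedge\cdots\wedge\ddc v_m\wedge\omega_{\C^n}^{\,n-m}\to \int_K \psi\, u\, \ddc v_1\wedge\cdots\wedge\ddc v_m\wedge\omega_{\C^n}^{\,n-m}$ for a fixed smooth cutoff $\psi$, a fixed compact $K$, and fixed bounded psh potentials $v_j$ of the $R_j$. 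Writing $\omega_{\C^n}^{\,n-m}$ locally as $\ddc(|z|^2/2)^{\,n-m}$ (up to a constant), this is again a Monge--Amp\`ere mass of bounded potentials, so the quantity to control is exactly $\int_K \psi(u_k-u)\,\ddc w_1\wedge\cdots\wedge\ddc w_n$ for bounded psh $w_i$ on $U$.

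The key input is Proposition \ref{p:W12-MA-general}: since $u_k\to u$ nicely in $W^{1,2}_*(U)$, we have
$$\lim_{k\to\infty}\ \sup_{w_1,\ldots,w_n}\ \int_{K'} |u_k-u|\,\ddc w_1\wedge\cdots\wedge\ddc w_n = 0$$
over all psh $w_i$ with $0\le w_i\le 1$ on $U$ and any fixed compact $K'\supset K$. By normalizing the bounded potentials $v_j$ (and the local potential of $\omega_{\C^n}^{\,n-m}$) to take values in $[0,1]$ after rescaling — the rescaling factors depend only on $K'$, $U$, and the uniform $L^\infty$ bounds on the potentials, not on $k$ — the desired convergence follows directly, since $\psi$ is bounded. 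One subtlety is that $u_k$, $u$ are only defined up to pluripolar sets as good representatives; but Monge--Amp\`ere measures of bounded potentials charge no pluripolar set, so the integrals are well-defined and the estimate in Proposition \ref{p:W12-MA-general} applies to the good representatives.

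Once $du_k\wedge R\to du\wedge R$ weakly is established, the statements for $\partial u_k\wedge R$ and $\bar\partial u_k\wedge R$ follow by bidegree: testing $du_k\wedge R$ against smooth forms of the complementary bidegree isolates the $\partial$- and $\bar\partial$-parts, and the convergence passes to each part separately. Finally, to see that $du\wedge R$, $\partial u\wedge R$, $\bar\partial u\wedge R$ are of order $0$, note that each is a weak limit of currents, but more usefully, one observes that for $\Phi$ smooth of compact support, $|\langle du\wedge R,\Phi\rangle| = |\int_U u\, d\Phi\wedge R| \le \|u\|_{L^\infty_{\loc}}$-type bound fails in general (since $u$ need not be bounded), so instead we argue: the real and imaginary parts of $\partial u\wedge R$ are differences of positive currents. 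Indeed, approximating $u$ by its standard regularizations $u_\epsilon$ (which converge to $u$ nicely), one writes $\partial u_\epsilon\wedge R\wedge\overline{\partial u_\epsilon}\wedge \cdots$ type positive combinations, or more simply invokes that $du\wedge\dc u\wedge R\le \ddc\varphi\wedge R$ is a positive measure, and by the Cauchy--Schwarz inequality of Lemma \ref{le-cauchyschsmooth} (applied after regularization, then passed to the limit) one controls $du\wedge R$ against the measures $dv\wedge\dc v\wedge R'$; hence $du\wedge R$ has locally finite mass, which for a current equal to a weak limit of forms gives order $0$. I expect the main obstacle to be the bookkeeping in reducing a general test form $\Phi$ and a general $R$ of intermediate degree $m$ to the clean situation of Proposition \ref{p:W12-MA-general}, in particular handling the factor $\omega_{\C^n}^{\,n-m}$ and the non-psh (but smooth) coefficients of $d\Phi$ by a partition of unity and by writing smooth functions as differences of psh functions locally; the convergence itself is then immediate from the cited proposition.
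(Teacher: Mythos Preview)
Your weak-convergence argument is essentially the paper's: both test against a smooth compactly supported $\Phi$, unwind the definition to get $-\int_U (u_k-u)\,d\Phi\wedge R$, bound $|d\Phi\wedge R|$ by $\|\Phi\|_{\mathcal{C}^1}\,R\wedge\omega_{\C^n}^{n-m}$, and invoke Proposition~\ref{p:W12-MA-general}. Your extra decomposition of the smooth coefficients of $d\Phi$ into differences of psh functions is unnecessary but harmless. The bidegree remark for $\partial u$, $\bar\partial u$ is also what the paper does.

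For the order-$0$ claim you have the right ingredients --- standard regularization $u_\epsilon$, the bound $du_\epsilon\wedge\dc u_\epsilon\le T_\epsilon$, and Cauchy--Schwarz (Lemma~\ref{le-cauchyschsmooth}) --- but the argument as written does not close. Cauchy--Schwarz bounds $\big|\int i\,\partial u_\epsilon\wedge\bar\partial v\wedge R\wedge\omega_{\C^n}^{n-m-1}\big|$ uniformly only for \emph{special} $v$ (say smooth, $i\,\partial v\wedge\bar\partial v\le\omega_{\C^n}$); you still need to explain why this controls $\langle \partial u_\epsilon\wedge R,\Phi\rangle$ for an \emph{arbitrary} smooth $(n-m-1,n-m)$-form $\Phi$ by $C\|\Phi\|_{\mathcal{C}^0}$. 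Saying ``$du\wedge R$ has locally finite mass'' is not meaningful for a non-positive current, and ``weak limit of forms gives order $0$'' is false without a uniform $\mathcal{C}^0$-bound on the approximants. The missing step, which the paper carries out explicitly, is to decompose any such $\Phi$ as $\sum_j f_j\,\bar\partial z_j\wedge\Phi_j$ with $|f_j|\lesssim\|\Phi\|_{\mathcal{C}^0}$ and $\Phi_j$ positive with $\Phi_j\le\omega_{\C^n}^{n-m-1}$; each piece is then of the form handled by the Cauchy--Schwarz estimate (with $v=z_j$), and Chern--Levine--Nirenberg bounds the resulting masses $\int T_\epsilon\wedge R\wedge\omega_{\C^n}^{n-m-1}$ uniformly in $\epsilon$. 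Once you insert this decomposition, letting $\epsilon\to0$ via the weak convergence already established gives the $\mathcal{C}^0$-bound for $\partial u\wedge R$ and hence order $0$.
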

 
 \proof Let $\Phi$ be a  smooth $(2n-2m-1)$-form of compact support in $U$. Let $P_k:= d u_k \wedge R$, and $P:= du \wedge R$.  By definition, we have
$$\big|\langle P_k -P, \Phi \rangle\big| = \bigg| \int_U (u- u_k) d \Phi \wedge R\bigg| \le \|\Phi\|_{\mathcal{C}^1} \int_{\supp \Phi} |u-u_k| R \wedge \omega_{\C^n}^{n-m}$$ 
 which converges to $0$ by Proposition \ref{p:W12-MA-general}. Hence  (\ref{ine-ukunicelychico1du}) follows. Let $K$ be a compact subset in $U$ and let $U_1 \Subset U$ be an open subset containing $K$. Let $T$ be a closed positive $(1,1)$-current on $U$ such that $i\partial u \wedge \bar \partial u \le T$ and let $u_\epsilon$ be the standard regularisation of $u$. Since $u_\epsilon \to u$ nicely, and the problem is local, we can assume that $u_\epsilon$ is defined on $U$ and  $i \partial u_\epsilon \wedge \bar \partial u_\epsilon \le T_\epsilon$, where $T_\epsilon$ is a closed positive $(1,1)$-current of mass bounded uniformly in $U$. It follows that for every smooth function $v$  supported in $K$ with $i \partial v \wedge \bar \partial v \le \omega_{\C^n}$, one has 
\begin{align*} 
 \bigg| \int_U i\partial u_\epsilon \wedge \bar \partial v  \wedge R \wedge \omega_{\C^n}^{n-m-1} \bigg| &\le \bigg(\int_U i\partial u_\epsilon \wedge \bar \partial u_\epsilon  \wedge R \wedge \omega_{\C^n}^{n-m-1} \bigg)^{1/2} \times\\
 \quad &  \bigg(\int_U i\partial v \wedge \bar \partial v  \wedge R \wedge \omega_{\C^n}^{n-m-1} \bigg)^{1/2}\\
 &\le \bigg(\int_{\supp v} T_\epsilon  \wedge R \wedge \omega_{\C^n}^{n-m-1} \bigg)^{1/2} \bigg(\int_{\supp v} R \wedge \omega_{\C^n}^{n-m} \bigg)^{1/2}\\
 & \lesssim \|T_\epsilon\|_{U_1} \|R\|_{U_1} \lesssim 1 
 \end{align*}
 by Chern-Levine-Nirenberg inequality, where $\|R\|_{U_1}$ denotes the mass of $R$ on $U_1$, similarly for $\|T_\epsilon\|_{U_1}$. Letting $\epsilon \to 0$ gives
\begin{align}\label{ine-partialuv}
 \bigg| \int_U i\partial u_\epsilon \wedge \bar \partial v  \wedge R \wedge \omega_{\C^n}^{n-m-1} \bigg| \le C
 \end{align}
 for some constant $C$ independent of $v$. Let  $\Phi$ be a smooth $(n-m-1,n-m)$-form with support in $K$.  We can write $\Phi= \sum_{j=1}^n f_j \bar \partial z_j \wedge \Phi_j$, where $|f_j| \le C_1 \|\Phi\|_{\mathcal{C}^0}$ and $\Phi_j$ is a positive form such that $\Phi_j \le \omega_{\C^n}^{n-m}$, where $z=(z_1,\ldots, z_n)$ are coordinates on $\C^n$ and $C_1>0$ is a dimensional constant. This combined with (\ref{ine-partialuv}) implies that $\int_X \partial u_\epsilon \wedge R \wedge \Phi$ is bounded by a constant $C$ (independent of $\epsilon$) times $\|\Phi\|_{\mathcal{C}^0}$. Letting $\epsilon \to 0$ implies that $\partial u \wedge R$ is of order $0$. We argue similarly for $\bar \partial u \wedge R$. The proof is finished.
 \endproof

Let $\psi$ be  a bounded nonnegative psh function on $U$. Let  $R= R_1 \wedge \cdots \wedge R_{n-1}$, where $R_1,\ldots,R_{n-1}$ are closed positive $(1,1)$-currents of bounded potentials on $U$. Let $u \in W^{1,2}_*(U)$. We define $du \wedge \dc \psi \wedge R$ and $d \psi \wedge \dc u \wedge R$ as follows. It suffices to deal with $du \wedge \dc \psi \wedge R$ because we put formally $d \psi \wedge \dc u \wedge R:= du \wedge \dc \psi \wedge R$ (which is true for smooth $u, \psi$ by a bi-degree reason).

Let $\chi$ be a smooth function compactly supported on $U$. We define
$$\langle du \wedge \dc \psi \wedge R, \chi \rangle:= -\int_U u d \chi \wedge \dc \psi \wedge R-\int_U u\chi \ddc \psi \wedge R.$$
Both terms in the right-hand side are finite because $u$ is integrable with respect to every Monge-Amp\`ere measure of bounded potentials (recall here that we can always write $\chi= w_1-w_2$ as the difference of two bounded nonnegative psh functions, and $d w_j \wedge \dc \psi \wedge R= \frac{1}{2}\ddc(w_j+\psi)^2\wedge R- (w_j+\psi) \ddc(w_j+\psi) \wedge R$ for $j=1,2$). Thus we obtain a $(n,n)$-current $du \wedge \dc \psi \wedge R$. 

\begin{lemma} \label{le-uknicelyuchico1duy2} Let $(u_k)_k$ be a sequence in $W^{1,2}_*(U)$ converging nicely to $u$ in $W^{1,2}_*(U)$. Then   
\begin{align} \label{ine-ukunicelychico1du2}
d u_k \wedge \dc \psi \wedge R \to du \wedge \dc \psi \wedge R 
 \end{align}
 weakly as currents. Consequently, the current $du \wedge \dc \psi \wedge R$ is of order $0$ (hence is a signed measure), and for every compactly supported continuous functions $f,g$ on $U$, one obtains
\begin{align} \label{ine-CWine}
\bigg|\int_U fg du \wedge \dc \psi \wedge R\bigg| \le \bigg(\int_U |f|^2 T \wedge R\bigg)^{1/2}\bigg(\int_U |g|^2 d \psi \wedge\dc \psi \wedge R\bigg)^{1/2},
\end{align} 
where $T$ is a closed positive current on $U$ such that $du \wedge \dc u \le T$.
 \end{lemma}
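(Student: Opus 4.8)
The plan is to prove the weak convergence $(\ref{ine-ukunicelychico1du2})$ directly from the defining formula for $du\wedge\dc\psi\wedge R$ together with Proposition \ref{p:W12-MA-general}, and then to deduce the order-zero property and the Cauchy-Schwarz inequality $(\ref{ine-CWine})$ by applying the smooth case (Lemma \ref{le-cauchyschsmooth}) to the standard regularisations $u_\epsilon$ of $u$ and to a smooth decreasing approximation $\psi_\delta\downarrow\psi$, letting $\delta\to0$ and then $\epsilon\to0$. For $(\ref{ine-ukunicelychico1du2})$, fix a smooth compactly supported $\chi$; by the definition of $du\wedge\dc\psi\wedge R$,
\[
\langle du_k\wedge\dc\psi\wedge R,\chi\rangle-\langle du\wedge\dc\psi\wedge R,\chi\rangle=-\int_U(u_k-u)\,d\chi\wedge\dc\psi\wedge R-\int_U(u_k-u)\chi\,\ddc\psi\wedge R.
\]
The last integral is at most $\|\chi\|_{L^\infty}\int_{\supp\chi}|u_k-u|\,\ddc\psi\wedge R$; working in a coordinate ball and normalising the potentials of $\psi$ and of the $R_j$ to lie in $[0,1]$, $\ddc\psi\wedge R$ is a Monge-Amp\`ere measure of bounded potentials, so a partition of unity and Proposition \ref{p:W12-MA-general} (using $u_k\to u$ nicely) make it tend to $0$. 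For the first integral, write locally $\chi=w_1-w_2$ with $w_1,w_2$ bounded nonnegative psh; by polarisation each $dw_j\wedge\dc\psi\wedge R$ is a finite linear combination of $\ddc(w_j+\psi)^2\wedge R$, $\ddc w_j^2\wedge R$, $\ddc\psi^2\wedge R$, $(w_j+\psi)\,\ddc(w_j+\psi)\wedge R$, $w_j\,\ddc w_j\wedge R$ and $\psi\,\ddc\psi\wedge R$, each dominated by a Monge-Amp\`ere measure of bounded potentials (the squares are bounded psh functions, and $b\,\ddc b\wedge R\le\|b\|_{L^\infty}\,\ddc b\wedge R$); hence Proposition \ref{p:W12-MA-general} again gives $\int_U|u_k-u|\,|d\chi\wedge\dc\psi\wedge R|\to0$. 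This proves $(\ref{ine-ukunicelychico1du2})$.

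Next, apply $(\ref{ine-ukunicelychico1du2})$ with $u_k=u_\epsilon$ (which converges nicely to $u$) and replace $\psi$ by a smooth $\psi_\delta\downarrow\psi$. For the smooth pair $(u_\epsilon,\psi_\delta)$, Lemma \ref{le-cauchyschsmooth} gives, for bounded Borel compactly supported $f,g$,
\[
\Big|\int_U fg\,du_\epsilon\wedge\dc\psi_\delta\wedge R\Big|\le\Big(\int_U|f|^2\,du_\epsilon\wedge\dc u_\epsilon\wedge R\Big)^{1/2}\Big(\int_U|g|^2\,d\psi_\delta\wedge\dc\psi_\delta\wedge R\Big)^{1/2}.
\]
Here $du_\epsilon\wedge\dc u_\epsilon\le\ddc\varphi_\epsilon$, where $\varphi_\epsilon$ is the regularisation of a psh potential $\varphi$ of $T$ which, as in our applications and as we may assume, is bounded; also $d\psi_\delta\wedge\dc\psi_\delta\le\tfrac{1}{2}\ddc\psi_\delta^2$ with $\psi$ bounded. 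Thus the Chern-Levine-Nirenberg inequality bounds both right-hand factors uniformly in $\epsilon,\delta$ on every fixed compact set. Choosing $f,g$ Borel with $f=g=\sqrt{|\chi|}$ up to a sign so that $fg=\chi$, for a test function $\chi$ supported in a compact $K$, letting $\delta\to0$ then $\epsilon\to0$ and using $(\ref{ine-ukunicelychico1du2})$, we obtain $|\langle du\wedge\dc\psi\wedge R,\chi\rangle|\le C_K\|\chi\|_{\mathcal{C}^0}$; hence $du\wedge\dc\psi\wedge R$ is of order $0$, that is, a signed measure of locally finite mass. Finally, $(\ref{ine-CWine})$ for continuous compactly supported $f,g$ follows by passing to the limit $\delta\to0$, $\epsilon\to0$ in the displayed smooth inequality: the left-hand side tends to $\int_U fg\,du\wedge\dc\psi\wedge R$ because $du_\epsilon\wedge\dc\psi_\delta\wedge R\to du\wedge\dc\psi\wedge R$ weakly with uniformly bounded masses on compacts (the order-zero bound just proved) and $fg$ is continuous with compact support, while the right-hand side tends to $\big(\int_U|f|^2\,T\wedge R\big)^{1/2}\big(\int_U|g|^2\,d\psi\wedge\dc\psi\wedge R\big)^{1/2}$ by the monotone continuity of mixed Monge-Amp\`ere operators with bounded potentials, using $\int_U|f|^2\,du_\epsilon\wedge\dc u_\epsilon\wedge R\le\int_U|f|^2\,\ddc\varphi_\epsilon\wedge R$ together with $\varphi_\epsilon\downarrow\varphi$ and $\psi_\delta\downarrow\psi$.

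The step I expect to be the main obstacle is precisely this last passage to the limit. All of $du\wedge\dc\psi\wedge R$, $d\psi\wedge\dc\psi\wedge R$ and $T\wedge R$ are defined only through the calculus of the Dinh-Sibony-Sobolev space and the bounded-potential Bedford-Taylor theory, so one has to check carefully that the approximating smooth integrals converge to the correct limits: uniform mass bounds on compacts for $du_\epsilon\wedge\dc\psi_\delta\wedge R$, absence of loss of mass as $\delta\to0$ and $\epsilon\to0$, and the fact that the trace measures of the currents involved put no mass on pluripolar sets, so that the quasi-everywhere convergences $\psi_\delta\to\psi$ and $\varphi_\epsilon\to\varphi$ can be exploited. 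Granting this, the polarisation identities and the appeals to Proposition \ref{p:W12-MA-general} and Lemma \ref{le-cauchyschsmooth} are routine.
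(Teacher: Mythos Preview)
Your proposal is correct and follows the same strategy as the paper: prove the weak convergence from the defining formula together with Proposition~\ref{p:W12-MA-general}, then use standard regularisations and the smooth Cauchy--Schwarz inequality (Lemma~\ref{le-cauchyschsmooth}) to obtain the order-$0$ bound and \eqref{ine-CWine}.

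There are two minor differences worth noting. First, you regularise both $u$ and $\psi$, while the paper regularises only $u$ and applies Cauchy--Schwarz directly with $\psi$ merely bounded psh. The paper's shortcut is legitimate because once $u_\epsilon$ is smooth, $\partial u_\epsilon$ is a bounded $(1,0)$-form and the Cauchy--Schwarz inequality for $i\alpha\wedge\bar\beta\wedge R$ holds at the level of the positive current $R$; your additional regularisation of $\psi$ is a clean way to reduce to the literally smooth case and costs nothing. Second, you assume $T$ has a bounded psh potential, whereas the lemma as stated allows any closed positive $T$ with $du\wedge\dc u\le T$. This extra hypothesis is not needed: the paper uses only that $T_\epsilon:=\ddc\varphi_\epsilon$ has mass uniformly bounded on compacta and that $T_\epsilon\wedge R\to T\wedge R$ weakly, both of which follow from Chern--Levine--Nirenberg and the fact that $R$ has bounded potentials (so $\int\chi\,T_\epsilon\wedge R=\int\varphi_\epsilon\,\ddc\chi\wedge R\to\int\varphi\,\ddc\chi\wedge R$ by dominated convergence, since psh functions are locally integrable against Monge--Amp\`ere measures of bounded potentials). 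Your assumption is therefore unnecessary, though as you observe it suffices for every application in the paper.

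Your identification of the limit passage as the delicate point is accurate; the paper treats it with the single phrase ``We check \eqref{ine-CWine} similarly,'' relying on the convergence $T_\epsilon\wedge R\to T\wedge R$ just described.
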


\proof  $K \subset U$ be a compact subset and let $U_1 \Subset U$ be an open subset containing $K$.   Let $\chi$ be a smooth function  supported in $K$. Observe that
$$\int_U \chi d(u_k- u) \wedge \dc \psi \wedge R = -\int_U (u_k- u) d \chi \wedge \dc \psi \wedge R-\int_U (u_k- u) \chi \ddc \psi \wedge R$$
which converges to $0$ because of Proposition \ref{p:W12-MA-general}. Hence (\ref{ine-ukunicelychico1du2}) follows. Let $u_\epsilon$ be the standard regularisation of $u$. By Cauchy-Schwarz inequality (Lemma \ref{le-cauchyschsmooth}), one has (as in the proof of Lemma \ref{le-uknicelyuchico1duy})
\begin{align*}
\bigg|\int_U fg d u_\epsilon \wedge \dc \psi \wedge R \bigg| \le \bigg(\int_{U} |f|^2 d u_\epsilon \wedge \dc u_\epsilon \wedge R \bigg)^{1/2} \bigg(\int_U |g|^2 d \psi \wedge \dc \psi \wedge R \bigg)^{1/2}
\end{align*}
for every continuous functions $f,g$ with compact support on $U$.
Arguing now  as in the proof of Lemma \ref{le-uknicelyuchico1duy}, we obtain that 
$$\bigg|\int_U \chi d u_\epsilon \wedge \dc \psi \wedge R \bigg| \le C_{U_1} \|\chi\|_{L^\infty},$$
where  $C_{U_1}>0$ is a constant independent of $\epsilon$. Letting $\epsilon \to 0$ implies that $du \wedge \dc \psi \wedge R$ is of order $0$.  We check  (\ref{ine-CWine}) similarly.
\endproof

\begin{corollary} \label{cor-dudcpsinomass} Let $K \Subset U$ be a compact set.  There exists a constant $C>0$ such that for every $u \in W^{1,2}_*(U)$ with $\|u\|_* \le 1$ and every Borel set $E$ in $K$, there holds
$$\mu(E) \le C (\capK(E, U))^{1/2},$$
where $\mu$ is the total variation measure of $du \wedge \dc \psi \wedge R$. In particular, $\mu$ has no mass on pluripolar sets.
\end{corollary}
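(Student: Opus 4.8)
The statement bounds the total variation of $du\wedge\dc\psi\wedge R$ on a Borel set $E\subset K$ by $C(\capK(E,U))^{1/2}$. The natural route is to apply the Cauchy--Schwarz estimate \eqref{ine-CWine} from Lemma \ref{le-uknicelyuchico1duy2} with a well-chosen test function, and then to control the two factors on the right-hand side using the relative capacity. So the first thing I would do is fix an intermediate open set $U'$ with $K\Subset U'\Subset U$ and a cutoff $\chi_0\in C_c^\infty(U)$ with $0\le\chi_0\le 1$ and $\chi_0\equiv 1$ on $U'$, which will let me localize everything near $K$ while keeping the currents $T$ and $R$ of bounded mass.

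The core step is to estimate $\mu(E)$ for a Borel set $E\subset K$. Since $\mu$ is the total variation of the signed measure $du\wedge\dc\psi\wedge R$, we have $\mu(E)=\sup\int g\,du\wedge\dc\psi\wedge R$ over Borel $g$ supported in $E$ with $|g|\le 1$ (approximating by continuous functions via regularity of the measure, so that \eqref{ine-CWine} applies). Taking $f=\chi_0$ and such a $g$ in \eqref{ine-CWine} gives
\[
\mu(E)\le \Big(\int_{U}\chi_0^2\,T\wedge R\Big)^{1/2}\Big(\int_{E}\,d\psi\wedge\dc\psi\wedge R\Big)^{1/2}.
\]
The first factor is bounded by a constant depending only on $K,U$ (Chern--Levine--Nirenberg, using $\|u\|_*\le 1$ so that $T$ has controlled mass on a neighbourhood of $\supp\chi_0$, together with the bounded-potential hypothesis on $R$). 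So the whole problem reduces to showing
\[
\int_{E}d\psi\wedge\dc\psi\wedge R\le C'\,\capK(E,U)
\]
for $E\subset K$, with $C'$ depending only on $K,U$ and the sup-norms of the potentials of $\psi$ and of $R$.

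For that last inequality I would argue as follows. Since $\psi$ is a bounded psh function, normalize so $0\le\psi\le 1$ on $U$ (rescaling absorbs a harmless constant). Then $d\psi\wedge\dc\psi\wedge R=\tfrac12\ddc\psi^2\wedge R-\psi\,\ddc\psi\wedge R$ pointwise as measures, and both $\ddc(\psi^2)\wedge R$ and $\ddc\psi\wedge R$ (hence $\psi\,\ddc\psi\wedge R$, since $0\le\psi\le1$) are positive measures dominated, after multiplying by a suitable cutoff supported near $K$, by a fixed multiple of $\ddc v_1\wedge\cdots\wedge\ddc v_n$ for bounded psh $v_i$ with $0\le v_i\le1$ obtained from the potentials of $\psi$, $\psi^2$, and $R$ on $U$. (This is the same bounded-potential bookkeeping used in the proof of Lemma \ref{le-chuanLpcuauWsao}: on a ball one writes $R=\ddc v$ for $0\le v\le C_1$, and $\psi^2$ is psh because $\psi$ is psh and nonnegative.) By definition of relative capacity, $\int_E\ddc v_1\wedge\cdots\wedge\ddc v_n\le\capK(E,U)$, which yields the claim; the total variation $\mu$ then inherits vanishing on pluripolar sets because $\capK(P,U)=0$ for every pluripolar $P$.

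The main obstacle I anticipate is not any single estimate but the bookkeeping needed to pass from the ``tensor'' object $d\psi\wedge\dc\psi\wedge R$ to an honest comparison with a Monge--Amp\`ere measure of bounded potentials on each coordinate ball: one must handle the negative term $-\psi\,\ddc\psi\wedge R$ correctly (it is $\le 0$, so it only helps for the upper bound, but one must still see that the \emph{total variation} — not just the measure — is controlled), and one must make sure the cutoffs do not destroy closedness of $R$ when invoking CLN. Writing $\chi_0^2 R$ is not closed, so the clean way is to dominate $\chi_0^2\le \chi_1$ for a psh-difference cutoff or simply to work on a slightly larger ball where $R=\ddc v$ with $v$ bounded and use $\int_{\supp\chi_0}T\wedge R\le \|T\|\cdot\|R\|$ via CLN directly. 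Once this localization is set up, the two displayed inequalities combine immediately to give $\mu(E)\le C(\capK(E,U))^{1/2}$.
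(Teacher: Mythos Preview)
Your approach is essentially the same as the paper's: apply the Cauchy--Schwarz inequality \eqref{ine-CWine} (extended to Borel test functions), bound the factor $\int T\wedge R$ near $K$ by Chern--Levine--Nirenberg using $\|u\|_*\le 1$, and bound $\int_E d\psi\wedge\dc\psi\wedge R$ by a constant times $\capK(E,U)$ via the boundedness of $\psi$. The paper is terser---it applies \eqref{ine-CWine} directly with $f=g=\mathbf{1}_E$ and invokes ``CLN and the fact that $\psi$ is bounded'' for the second factor without spelling out the $\tfrac12\ddc(\psi^2)-\psi\ddc\psi$ identity you use---but your added bookkeeping (the cutoff $\chi_0$, the normalization $0\le\psi\le 1$, and the observation that $\psi^2$ is psh so $\ddc(\psi^2)\wedge R$ is a bona fide Monge--Amp\`ere measure of bounded potentials) is exactly what is implicit there.
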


\proof Since $\|u\|_* \le 1$, there exists a closed positive $(1,1)$-current $T$ on $U$ with $\int_U T \wedge \omega_{\C^n}^{n-1}=1$ and $du \wedge \dc u \le T$.  By this and (\ref{ine-CWine}), we get
\begin{align*} 
\int_E d \mu  &\le \bigg(\int_E T \wedge R\bigg)^{1/2}\bigg(\int_E d \psi \wedge\dc \psi \wedge R\bigg)^{1/2}\\
&\le \bigg(\int_K T \wedge R\bigg)^{1/2}\bigg(\int_E d \psi \wedge\dc \psi \wedge R\bigg)^{1/2}\\
&\lesssim (\capK(E, U))^{1/2}
\end{align*} 
by Chern-Levine-Nirenberg inequality and the fact that $\psi$ is bounded.
\endproof


\begin{lemma} \label{le-uknicelyuchico1duy2them} Let $(u_k)_k$ be a sequence in $W^{1,2}_*(U)$ converging nicely to $u$ in $W^{1,2}_*(U)$. 
Let $v_1, \ldots, v_m$ be functions in $W^{1,2}_*(U)$ and let $v_{jk} \in W^{1,2}_*(U)$ converges nicely to $v_j$ as $k \to \infty$ for $1 \le j \le m$. Let $\rho$ be a smooth function on $\R^m$ with $\|\rho\|_{\mathcal{C}^1([-r_1,r_1] \times \cdots \times [-r_m, r_m])}< \infty$, where $r_j:= \|v_j\|_{L^\infty} \in [0, \infty]$ for $1 \le j \le m$. Then   
\begin{align} \label{ine-ukunicelychico1du23}
\rho(v_{1k},\ldots,v_{mk}) d u_k \wedge \dc \psi \wedge R \to \rho(v_1, \ldots, v_m) du \wedge \dc \psi \wedge R 
 \end{align}
 weakly as currents.
\end{lemma}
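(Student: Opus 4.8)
Set $w_k:=\rho(v_{1k},\dots,v_{mk})$ and $w:=\rho(v_1,\dots,v_m)$. Since the currents in \textup{(\ref{ine-ukunicelychico1du23})} have top bidegree $(n,n)$, it suffices to fix a smooth compactly supported test function $\chi$ on $U$ and prove $\int_U\chi\,w_k\,du_k\wedge\dc\psi\wedge R\to\int_U\chi\,w\,du\wedge\dc\psi\wedge R$. I would carry this out assuming $\|v_{jk}\|_{L^\infty}\le r_j$ for all $j,k$ — the situation met in the applications — so that $w_k,w$ are all bounded by $B:=\|\rho\|_{L^\infty([-r_1,r_1]\times\cdots\times[-r_m,r_m])}$; the general case is reduced to this one by first replacing $\rho$ with its product with a cut-off supported near that box. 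The preliminary facts I would record: since each $v_{jk}\to v_j$ nicely, $v_{jk}\to v_j$ in capacity by \cite[Corollary 2.11]{DinhMarinescuVu}, hence $(v_{1k},\dots,v_{mk})\to(v_1,\dots,v_m)$ in capacity, and since $\rho$ is uniformly continuous on the box, $w_k\to w$ in capacity; moreover each $w_k$ and $w$, being a continuous function of quasi-continuous functions (Proposition \ref{pro-quasiprojective}), is quasi-continuous.

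Next I would set up the uniform capacity control on the measures. By definition of nice convergence $M:=\sup_k\|u_k\|_*<\infty$, and $\|u\|_*<\infty$. By Lemma \ref{le-uknicelyuchico1duy2} the currents $du_k\wedge\dc\psi\wedge R$ and $du\wedge\dc\psi\wedge R$ are signed measures; writing $\mu_k,\mu$ for their total variation measures and applying Corollary \ref{cor-dudcpsinomass} to $u_k/M$ (resp. to $u/\|u\|_*$) and rescaling by linearity in the first argument, for each compact $K\Subset U$ there is a constant $C_K>0$ with $\mu_k(E),\mu(E)\le C_K\,\capK(E,U)^{1/2}$ for every Borel $E\subset K$. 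In particular these measures are uniformly locally finite and charge no pluripolar set, so $w_k\,du_k\wedge\dc\psi\wedge R$ and $w\,du\wedge\dc\psi\wedge R$ are well-defined measures (defined via quasi-continuous representatives), and the statement makes sense.

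The core of the argument is then the splitting, with $K:=\supp\chi$,
$$\chi w_k\,du_k\wedge\dc\psi\wedge R=\chi(w_k-w)\,du_k\wedge\dc\psi\wedge R+\chi w\,du_k\wedge\dc\psi\wedge R,$$
treated term by term. For the first term I would bound its integral by $\|\chi\|_{L^\infty}\int_K|w_k-w|\,d\mu_k$ and, given $\delta>0$, split $K$ along $\{|w_k-w|>\delta\}$: on that part $|w_k-w|\le 2B$ while $\mu_k(\{|w_k-w|>\delta\}\cap K)\le C_K\,\capK(\{|w_k-w|>\delta\}\cap K,U)^{1/2}\to0$ by capacity convergence, and off it the contribution is $\le\delta\,\mu_k(K)\le\delta\,C_K\,\capK(K,U)^{1/2}$; letting $k\to\infty$ and then $\delta\to0$ drives this term to $0$. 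For the second term it suffices to show $\int_U\chi w\,du_k\wedge\dc\psi\wedge R\to\int_U\chi w\,du\wedge\dc\psi\wedge R$: using quasi-continuity of $w$, Tietze extension and standard regularisation, for each $\epsilon\in(0,1]$ I would pick a smooth compactly supported $w_\epsilon$ with $\|w_\epsilon\|_{L^\infty}\le B+1$ and an open $U_\epsilon$ with $\capK(U_\epsilon,U)<\epsilon$ and $|w-w_\epsilon|\le\epsilon$ on $K\setminus U_\epsilon$; splitting $K$ along $U_\epsilon$ and using the capacity bound gives $\bigl|\int_U\chi(w-w_\epsilon)\,du_k\wedge\dc\psi\wedge R\bigr|\le\|\chi\|_{L^\infty}C_K\bigl(\epsilon\,\capK(K,U)^{1/2}+(2B+1)\epsilon^{1/2}\bigr)$ uniformly in $k$ (and the same with $\mu$ in place of $\mu_k$), while $\int_U\chi w_\epsilon\,du_k\wedge\dc\psi\wedge R\to\int_U\chi w_\epsilon\,du\wedge\dc\psi\wedge R$ by Lemma \ref{le-uknicelyuchico1duy2} since $\chi w_\epsilon$ is a genuine smooth test function; combining the three estimates and letting $\epsilon\to0$ concludes.

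I expect the main obstacle to be this last step: promoting the weak convergence $du_k\wedge\dc\psi\wedge R\to du\wedge\dc\psi\wedge R$ of Lemma \ref{le-uknicelyuchico1duy2}, which a priori only pairs against smooth forms, to pairing against the bounded-but-merely-quasi-continuous coefficient $\chi w$. This is exactly where the uniform absolute continuity of $\mu_k$ with respect to $\capK(\cdot,U)^{1/2}$ from Corollary \ref{cor-dudcpsinomass} is indispensable, and it has to be dovetailed, uniformly in $k$, with the capacity-aware smooth approximation of $w$; by comparison the replacement of the coefficient $w_k$ by $w$ is routine, resting only on $w_k\to w$ in capacity together with the same uniform bound.
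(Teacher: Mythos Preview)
Your proof is correct and follows essentially the same route as the paper's: split $w_k=(w_k-w)+w$, handle the first piece via convergence in capacity of the $v_{jk}$'s together with the uniform capacity bound of Corollary~\ref{cor-dudcpsinomass}, and handle the second piece by quasi-continuity of $w$ plus Lemma~\ref{le-uknicelyuchico1duy2} (the paper phrases this last step as ``arguments similar to those in the proof of Lemma~\ref{le-hoitulientucdia}''). Your remark on the implicit hypothesis $\|v_{jk}\|_{L^\infty}\le r_j$ is well taken; the paper does not flag it, but it is satisfied in the only application (Proposition~\ref{pro-inter-onU}, where $v_{jk}$ is the standard regularisation of $v_j$).
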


\begin{proof} By Corollary \ref{cor-dudcpsinomass}, both sides of (\ref{ine-ukunicelychico1du23}) make sense by using good representatives of $v_1,\ldots, v_m$. 
By Lemma \ref{le-uknicelyuchico1duy2}, we already have that $d u_k \wedge \dc \psi \wedge R$ converges weakly to  $du \wedge \dc \psi \wedge R$.
Using this, Corollary \ref{cor-dudcpsinomass} again,  quasi-continiuity of $v_1,\ldots,v_m$ and arguments similar to those in the proof of Lemma \ref{le-hoitulientucdia}, we obtain
\begin{align} \label{ine-ukunicelychico1du2yeu}
\rho(v_{1},\ldots,v_{m}) d u_k \wedge \dc \psi \wedge R \to \rho(v_1, \ldots, v_m) du \wedge \dc \psi \wedge R 
 \end{align}
 weakly as currents. Put 
 $$P_k:=\rho(v_{1k},\ldots,v_{mk}) d u_k \wedge \dc \psi \wedge R, \quad P:=\rho(v_{1},\ldots,v_{m}) d u_k \wedge \dc \psi \wedge R.$$
 We compare $P_k$ with $P$. To simplify the notations, we consider $m=1$. The general case is done using similar arguments. 
 
Let $\chi$ be a smooth function with compact support in $U$ and let $\delta>0$ be a positive constant. Let $Z:=[-r_1,r_1] \times \cdots \times [-r_m, r_m]$. By hypothesis,  $v_{1k}$ converges to $v_1$ in capacity.   By Corollary \ref{cor-dudcpsinomass}, we have
\begin{align*}
\bigg| \int_U \chi(P_k -P) \bigg| &\le \bigg| \int_{\{|v_{1k}- v_1| \ge \delta\}} \chi(P_k -P) \bigg|+ \bigg| \int_{\{|v_{1k}- v_1| \le \delta\}} \chi(P_k -P) \bigg|\\
&\lesssim \|\rho\|_{\mathcal{C}^0(Z)}\bigg(\capK\big(\{|v_{1k}- v_1| \ge \delta \} \cap \supp \chi, U \big)\bigg)^{1/2} + \delta \|\rho\|_{\mathcal{C}^1(Z)}\\
&= o_{k \to \infty}(1)+ \delta \|\rho\|_{\mathcal{C}^1(Z)}.
\end{align*}
Letting $k \to \infty$ and then $\delta \to 0$ gives the desired convergence.
\end{proof}

\begin{proposition}\label{pro-inter-onU}
Let $m\ge 1$ be an integer. Let $v_1,\ldots, v_m, u \in  W^{1,2}_*(U)$ and let $\psi$ be a bounded psh function. Let $R$ be the wedge product of $n-1$ closed positive $(1,1)$-currents of bounded potentials and let $\chi$ be a smooth function compactly supported on $U$. Let $\rho$ be a smooth function on $\R^m$ with $\|\rho\|_{\mathcal{C}^1([-r_1,r_1] \times \cdots \times [-r_m, r_m])}< \infty$, where $r_j:= \|v_j\|_{L^\infty} \in [0, \infty]$ for $1 \le j \le m$.  Assume that $u$ is bounded. Then we have
\begin{multline} \label{eq-tichphantungp}
\int_U \chi \rho(v_1,\ldots, v_m) du \wedge \dc \psi \wedge R= - \int_U u \rho(v_1,\ldots, v_m) d\chi \wedge \dc \psi \wedge R\\
 - \sum_{j=1}^m \int_U u \chi \partial_j \rho(v_1,\ldots, v_m) d v_j \wedge \dc \psi \wedge R- \int_U u \chi \rho(v_1,\ldots,v_m) \ddc \psi \wedge R,
 \end{multline}
 where $\partial_j \rho$ denotes the partial derivative of $\rho$ with respect to the $j$-th variable. 
\end{proposition}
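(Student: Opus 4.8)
The plan is to prove \eqref{eq-tichphantungp} first when all the data are smooth, where it is simply the Leibniz rule combined with Stokes' formula, and then to remove the smoothness in two successive mollification steps, using the continuity results established above. Since the statement is local, I would take $U$ to be a ball in $\C^n$ with $\supp\chi\Subset U$. \emph{Smooth case.} If $u,v_1,\dots,v_m,\psi$ are smooth and $R=R_1\wedge\cdots\wedge R_{n-1}$ with each $R_i$ a smooth closed positive $(1,1)$-form, then $\chi\rho(v_1,\dots,v_m)u\,\dc\psi\wedge R$ is a smooth compactly supported form, so $\int_U d\big(\chi\rho(v_1,\dots,v_m)u\,\dc\psi\wedge R\big)=0$ by Stokes; since each $R_i$ is closed, $d(\dc\psi\wedge R)=\ddc\psi\wedge R$, and expanding $d(\chi\rho(v_1,\dots,v_m)u)$ by the Leibniz rule, wedging with $\dc\psi\wedge R$ and rearranging yields exactly \eqref{eq-tichphantungp}.

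\emph{First mollification.} Keeping $\psi$ and $R$ smooth, I would replace $u,v_j$ by their standard regularisations $u_\delta,v_{j\delta}$, which converge nicely with uniformly bounded $*$-norms on a slightly smaller ball and with $\|u_\delta\|_{L^\infty}\le\|u\|_{L^\infty}$, apply the smooth case to $(u_\delta,v_{j\delta},\psi,R)$, and let $\delta\to0$. For the left-hand side and the $dv_j$-terms I would use the weak convergence $du_\delta\wedge\dc\psi\wedge R\to du\wedge\dc\psi\wedge R$ and $dv_{j\delta}\wedge\dc\psi\wedge R\to dv_j\wedge\dc\psi\wedge R$ furnished by Lemmas \ref{le-uknicelyuchico1duy2} and \ref{le-uknicelyuchico1duy2them}, the uniform bound of their total variations by $\capK(\cdot,U)^{1/2}$ from Corollary \ref{cor-dudcpsinomass}, and the fact that the densities $\chi\rho(v_\delta)$ and $u_\delta\chi\,\partial_j\rho(v_\delta)$ are uniformly bounded and converge in capacity (nice convergence implies convergence in capacity, and $\rho,\partial_j\rho$ are continuous on the relevant box); splitting each integral over $\{|g_\delta-g|>\eta\}$ and its complement, as in the proofs of Lemmas \ref{le-hoitulientucdia} and \ref{le-uknicelyuchico1duy2them}, then gives the convergence. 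The two Monge-Amp\`ere-type terms converge by the same quasi-continuity argument, the measure $\ddc\psi\wedge R$ being fixed and $d\chi\wedge\dc\psi\wedge R$ a fixed smooth top-form.

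\emph{Second mollification.} I would then mollify the local potentials, getting $\psi_\epsilon\searrow\psi$ and $R_\epsilon=\ddc\phi_{1\epsilon}\wedge\cdots\wedge\ddc\phi_{(n-1)\epsilon}$ with $\phi_{i\epsilon}\searrow\phi_i$ smooth psh potentials of $R_i$, apply the previous step to $(u,v_j,\psi_\epsilon,R_\epsilon)$, and let $\epsilon\to0$. The term $\int_U u\chi\rho(v)\,\ddc\psi_\epsilon\wedge R_\epsilon$ converges by Bedford--Taylor continuity of mixed Monge-Amp\`ere measures under decreasing sequences together with the quasi-continuity argument of Lemma \ref{le-hoitulientucdia} (extended to mixed measures). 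For the remaining three terms, all of the form $\int_U(\text{bounded quasi-continuous density})\,dw\wedge\dc\psi_\epsilon\wedge R_\epsilon$ with $w\in\{u,v_j,\chi\}$, one needs the analogues of Lemma \ref{le-uknicelyuchico1duy2} and of Corollary \ref{cor-dudcpsinomass} in which $\psi$ and $R$ are simultaneously replaced by $\psi_\epsilon$ and $R_\epsilon$; granting this, the same capacity/quasi-continuity splitting as above finishes the proof.

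\emph{Expected main obstacle.} The hard part will be exactly this last extension, and within it the mixed term $\int_U u\chi\,\partial_j\rho(v_1,\dots,v_m)\,dv_j\wedge\dc\psi\wedge R$: Lemmas \ref{le-uknicelyuchico1duy2} and \ref{le-uknicelyuchico1duy2them} are stated for \emph{fixed} $\psi$ and $R$, so one must first establish weak convergence of $dv_{j\epsilon}\wedge\dc\psi_\epsilon\wedge R_\epsilon$ allowing the potentials to decrease, and — since we only assume $\|\rho\|_{\mathcal{C}^1}<\infty$, so that $\partial_j\rho$ is merely bounded — the stability of the pairing against the only quasi-continuous density $u\chi\,\partial_j\rho(v)$ cannot be read off from Lemma \ref{le-uknicelyuchico1duy2them} but has to come from the uniform capacity estimate of Corollary \ref{cor-dudcpsinomass} and the quasi-continuity of $u$ and of $\rho(v_1,\dots,v_m)$. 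Keeping all auxiliary bounds (masses of $R_\epsilon$, $L^\infty$-norms of the potentials, $*$-norms of $u_\delta,v_{j\delta}$) uniform in $\delta,\epsilon$ is routine for mollifications on slightly shrunk domains.
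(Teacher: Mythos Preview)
Your plan is sound but takes an unnecessary detour. The paper never regularises $\psi$ or $R$: it regularises first the $v_j$'s (using dominated convergence on the left-hand side, since $du\wedge\dc\psi\wedge R$ is a fixed signed measure with no mass on pluripolar sets and $\rho(v_\epsilon)\to\rho(v)$ pointwise off a pluripolar set; and Lemma~\ref{le-uknicelyuchico1duy2them} for the right-hand side, absorbing the bounded factor $u$ as an extra variable in $\rho$), and then regularises $u$ (again via Lemma~\ref{le-uknicelyuchico1duy2them}, now with the $v_j$ already smooth so that $\rho(v),\partial_j\rho(v)$ are continuous). Once both $u$ and the $v_j$ are smooth, $\chi\rho(v_1,\dots,v_m)$ is a genuine smooth compactly supported test function, and \eqref{eq-tichphantungp} is nothing but the \emph{definition} of $du\wedge\dc\psi\wedge R$ applied to that test function plus the Leibniz rule for $d(\chi\rho(v))$ --- i.e.\ the standard integration by parts for bounded psh functions in Bedford--Taylor theory. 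No smoothness of $\psi$ or of the potentials of $R$ is needed at the base of the induction.

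So the ``expected main obstacle'' you single out --- extending Lemmas~\ref{le-uknicelyuchico1duy2} and \ref{le-uknicelyuchico1duy2them} to decreasing $\psi_\epsilon,R_\epsilon$ --- simply does not arise. Your second mollification step, and the delicate simultaneous convergence of $dv_j\wedge\dc\psi_\epsilon\wedge R_\epsilon$, can be deleted entirely. What your route buys is a more self-contained argument that does not invoke the classical Bedford--Taylor integration by parts as a black box; what the paper's route buys is brevity and the avoidance of any new convergence lemma. Note also that the paper separates the two regularisations (first $v_j$, then $u$) rather than mollifying them simultaneously, which lets the left-hand side be handled by plain dominated convergence in the first step.
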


\proof Let $v_{j\epsilon}$ be standard regularisations of $v_j$ for $1 \le j \le m$. Thus $v_{j\epsilon}$ converges nicely to $v_j$ as $\epsilon \to 0$, and $v_{j\epsilon}$ converges pointwise to $v_j$ outside some pluripolar sets. By this and Lebesgue's dominated convergence theorem (and the hypothesis on $\rho$), we get
$$\int_U \chi \rho(v_{1\epsilon},\ldots, v_{m\epsilon}) du \wedge \dc \psi \wedge R \to \int_U \chi \rho(v_1,\ldots, v_m) du \wedge \dc \psi \wedge R$$
as $\epsilon \to 0$. On the other hand, if we denote by $L(v_1,\ldots,v_m)$ the right-hand side of (\ref{eq-tichphantungp}), then  Lemma \ref{le-uknicelyuchico1duy2them} implies that 
$$L(v_{1\epsilon}, \ldots, v_{m \epsilon}) \to L(v_1, \ldots,v_m)$$
as $\epsilon \to 0$. Hence by considering $v_{j\epsilon}$ in place of $v_j$, we can assume that $v_j$'s are smooth. Now applying again Lemma \ref{le-uknicelyuchico1duy2them} shows that it suffices to treat the case where $u$ is smooth (we use the standard regularisation of $u$ here).  In this case, this is just the usual integration by parts for bounded psh functions. The proof is finished.  
\endproof

We note that the above definition of $du \wedge \dc \psi \wedge R$ extends obviously to the case where $\psi$ is a bounded quasi-psh function: in this case, we write locally $\psi= \psi_1+g$, where $\psi_1$ is psh and $g$ is a smooth function, and put 
$$du \wedge \dc \psi \wedge R:= du \wedge \dc \psi_1 \wedge R + du \wedge \dc g \wedge R$$
which is independent of choices of $\psi_1,g$ because the continuity of $du \wedge \dc \psi_1 \wedge R,$ $du \wedge \dc g \wedge R$ under sequences of standard regularisations of $u$. 
 
Consider now a compact complex manifold $X$. Let $\psi$ be a bounded quasi-psh function on $X$, $u \in W^{1,2}_*(X)$ and let $R$ be the wedge product of $n-1$ closed positive $(1,1)$-currents of bounded potentials. Observe that for every open subset $U$ of $X$ such that $\overline U$ is contained in a local chart of $X$, we have $u \in W^{1,2}_*(U)$. Thus, the local construction of $du \wedge \dc \psi \wedge R$ on local charts of $X$ gives a global well-defined signed measure on $X$. The following is a direct consequence of (\ref{ine-CWine}).

\begin{lemma} \label{le-cauchyschwarglobal} Let $u\in W^{1,2}_*(X)$ and let $f,g$ be bounded Borel functions on $X$. Let $T$ be a closed positive current on $X$ such that $du \wedge \dc u \le T$. Then the measure $du \wedge \dc \psi \wedge R$ is of bounded total variation on $X$, and there holds
$$\bigg|\int_X fg du \wedge \dc \psi \wedge R \bigg| \le \bigg(\int_X |f|^2 T \wedge R\bigg)^{1/2}\bigg(\int_X |g|^2 d \psi \wedge\dc \psi \wedge R\bigg)^{1/2}.$$ 
\end{lemma}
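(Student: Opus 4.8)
The plan is to glue the local estimate (\ref{ine-CWine}) over $X$ by means of a smooth partition of unity, proving the inequality first for continuous test functions and then extending it to bounded Borel ones by a routine approximation. First I would fix a finite open cover $U_1,\dots,U_N$ of $X$ with each $\overline{U_i}$ contained in a local chart biholomorphic to a bounded open subset of $\C^n$, and a smooth partition of unity $(\chi_i)_{i=1}^N$ with $0\le\chi_i\le1$, $\supp\chi_i\subset U_i$ and $\sum_i\chi_i=1$; note that $\chi_i^{1/2}$ is continuous. Since $\psi$ is bounded quasi-psh, there is a closed positive $(1,1)$-current $T'$ on $X$ with $d\psi\wedge\dc\psi\le T'$ (for instance $T'=\frac{1}{2}\ddc(\psi^2)$ after normalizing $\psi$ to be nonnegative psh, plus a multiple of $\omega_X$ in the general quasi-psh case), so that both $\nu_1:=T\wedge R$ and $\nu_2:=d\psi\wedge\dc\psi\wedge R\le T'\wedge R$ are positive measures on $X$ of finite total mass, their masses being cohomological intersection numbers. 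Recall also that $\mu:=du\wedge\dc\psi\wedge R$ is obtained by patching the locally defined signed measures of Lemma \ref{le-uknicelyuchico1duy2}, and that $\mu$ is a current of order $0$ since $X$ is compact.

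For continuous functions $f,g$ on $X$ one writes
\[
\int_X fg\,d\mu=\sum_{i=1}^N\int_{U_i}\bigl(\chi_i^{1/2}f\bigr)\bigl(\chi_i^{1/2}g\bigr)\,du\wedge\dc\psi\wedge R,
\]
applies (\ref{ine-CWine}) on each $U_i$ to the continuous compactly supported functions $\chi_i^{1/2}f$ and $\chi_i^{1/2}g$, and then uses the Cauchy--Schwarz inequality for the finite sum over $i$, obtaining
\[
\Bigl|\int_X fg\,d\mu\Bigr|\le\Bigl(\sum_i\int_{U_i}\chi_i|f|^2\,\nu_1\Bigr)^{1/2}\Bigl(\sum_i\int_{U_i}\chi_i|g|^2\,\nu_2\Bigr)^{1/2}=\Bigl(\int_X|f|^2\,\nu_1\Bigr)^{1/2}\Bigl(\int_X|g|^2\,\nu_2\Bigr)^{1/2}.
\]
Taking $f\equiv1$ and letting $g$ run over continuous functions with $|g|\le1$, the supremum of the left-hand side is the total variation $\|\mu\|(X)$, which is therefore at most $\nu_1(X)^{1/2}\nu_2(X)^{1/2}<\infty$; this gives the finiteness assertion.

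To pass from continuous to bounded Borel $f,g$, I would put $\sigma:=|\mu|+\nu_1+\nu_2$, a finite positive Borel measure on the compact metric space $X$, and, given bounded Borel $f,g$, use Lusin's theorem together with truncation to produce continuous $f_k,g_k$ with $\|f_k\|_{L^\infty}\le\|f\|_{L^\infty}$, $\|g_k\|_{L^\infty}\le\|g\|_{L^\infty}$ and $f_k\to f$, $g_k\to g$ $\sigma$-almost everywhere. Applying the inequality just established to $(f_k,g_k)$ and letting $k\to\infty$ --- dominated convergence against $|\mu|$ with dominant $\|f\|_{L^\infty}\|g\|_{L^\infty}$ on the left, and against $\nu_1,\nu_2$ on the right --- yields the inequality for $f,g$. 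The only point that needs genuine care is this last passage: one must make sure all three measures $\mu$, $\nu_1$, $\nu_2$ are finite --- equivalently that $\psi\in W^{1,2}_*(X)$, which is exactly where the bound $d\psi\wedge\dc\psi\le T'$ is used --- so that continuous functions are dense in the relevant $L^2$-spaces and the approximation is legitimate; everything else is a mechanical consequence of (\ref{ine-CWine}).
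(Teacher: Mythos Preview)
Your argument is correct and follows the same line as the paper's proof, which is extremely terse (``compactness of $X$ and (\ref{ine-CWine})'' for finiteness, then ``Lusin's theorem'' for the passage to Borel $f,g$); your partition-of-unity trick with the weights $\chi_i^{1/2}$ is precisely the natural way to make the first step explicit, and the $\sigma$-a.e.\ approximation via Lusin is exactly what the paper intends. The only minor imprecision is your parenthetical construction of $T'$ (for $\psi$ merely quasi-psh, $\tfrac{1}{2}\ddc(\psi^2)$ need not be positive), but this is inessential: all you need is that $d\psi\wedge\dc\psi\wedge R$ is a positive measure of finite mass on the compact $X$, which holds since bounded quasi-psh functions lie in $W^{1,2}_*(X)$.
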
  

\proof The fact that the measure is of bounded total variation follows from the compactness of $X$ and (\ref{ine-CWine}). The desired inequality is also deduced from these facts and approximating $f,g$ by continuous functions (Lusin's theorem).
\endproof

\begin{theorem} \label{th-integrationbyparts} Let $X$ be a compact complex manifold.
Let $m\ge 1$ be an integer. Let $v_1,\ldots, v_m, u \in W^{1,2}_*(X)$ and let $\psi$ be a bounded quasi-psh function. Let $R$ be the wedge product of $n-1$ closed positive $(1,1)$-currents of bounded potentials. Let $\rho$ be a smooth function on $\R^m$ with $\|\rho\|_{\mathcal{C}^1([-r_1,r_1] \times \cdots \times [-r_m, r_m])}< \infty$, where $r_j:= \|v_j\|_{L^\infty} \in [0, \infty]$ for $1 \le j \le m$. Assume that $u$ is bounded.  Then we have
\begin{multline*}
\int_X \rho(v_1,\ldots, v_m) du \wedge \dc \psi \wedge R=\\
 - \sum_{j=1}^m \int_X u \partial_j \rho(v_1,\ldots, v_m) d v_j \wedge \dc \psi \wedge R- \int_X u \rho(v_1,\ldots, v_m) \ddc \psi \wedge R,
 \end{multline*}
 where $\partial_j \rho$ denotes the partial derivative of $\rho$ with respect to the $j$-th variable. 
\end{theorem}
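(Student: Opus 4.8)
The statement is the global (compact) version of the local integration-by-parts formula in Proposition \ref{pro-inter-onU}, with $\psi$ only assumed bounded quasi-psh rather than psh. The plan is to globalize via a partition of unity and then reduce the quasi-psh case to the psh case locally. First I would fix a finite open cover $\{U_i\}$ of $X$ by coordinate charts whose closures sit inside bigger charts biholomorphic to bounded domains in $\C^n$, and a smooth partition of unity $\{\chi_i\}$ subordinate to it, so that $\sum_i \chi_i \equiv 1$. On each $U_i$ write $\psi = \psi_i + g_i$ with $\psi_i$ psh and $g_i$ smooth (possible by definition of bounded quasi-psh), so that $\ddc\psi = \ddc\psi_i$ there and all the measures $du\wedge\dc\psi\wedge R$, $dv_j\wedge\dc\psi\wedge R$, $\ddc\psi\wedge R$ are already well-defined on $U_i$ by the constructions of Section 2 (the $g_i$ part contributing smooth coefficients against which $W^{1,2}_*$-functions integrate fine, and by the remark following Proposition \ref{pro-inter-onU} the definition is independent of the splitting).

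Next I would apply Proposition \ref{pro-inter-onU} on each $U_i$ with the cutoff function $\chi_i$, the same $v_1,\dots,v_m,u,R$, the bounded psh function $\psi_i$, and then handle the extra smooth term $du\wedge\dc g_i\wedge R$ by the ordinary integration-by-parts formula for the smooth form $\dc g_i$ (which is elementary since after a standard regularisation $u_\epsilon\to u$ the identity $\int \chi_i\rho(v)\,du_\epsilon\wedge\dc g_i\wedge R = -\int u_\epsilon\,d(\chi_i\rho(v))\wedge\dc g_i\wedge R - \int u_\epsilon\chi_i\rho(v)\,\ddc g_i\wedge R$ passes to the limit using Lemma \ref{le-uknicelyuchico1duy2them} and $W^{1,2}$-convergence on $\overline{U_i}$; alternatively incorporate $g_i$ into $\psi_i$ after adding a large constant to restore positivity). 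Adding a constant $c_i$ to $\psi_i$ changes nothing since $\ddc$ kills it and the $du\wedge\dc(\text{const})$ terms vanish. Summing the resulting identities over $i$ and using $\sum_i\chi_i=1$, $\sum_i d\chi_i = 0$, the boundary-type terms $-\int u\rho(v)\,d\chi_i\wedge\dc\psi\wedge R$ telescope to $-\int u\rho(v)\,d(\sum_i\chi_i)\wedge\dc\psi\wedge R = 0$, leaving exactly the claimed formula; the summability of all integrals is guaranteed by Lemma \ref{le-cauchyschwarglobal} together with Corollary \ref{cor-dudcpsinomass} (so the measures have bounded total variation and charge no pluripolar set, hence the good-representative values of $\rho(v_1,\dots,v_m)$ and $\partial_j\rho(v_1,\dots,v_m)$ are well-defined against them).

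The main obstacle I expect is not any single hard estimate but the bookkeeping needed to make every term in the local formula patch consistently: one must check that $d\chi_i\wedge\dc\psi\wedge R$ is the \emph{same} signed measure whether computed via $\psi_i$ on $U_i$ or via $\psi_{i'}$ on an overlapping $U_{i'}$ (this follows from the independence-of-splitting remark and the fact that $\psi_i - \psi_{i'}$ is smooth on $U_i\cap U_{i'}$, so the difference is a $du\wedge\dc(\text{smooth})\wedge R$ term treated by classical calculus), and that the pairing of $\rho(v_1,\dots,v_m)$ against these order-$0$ currents via good representatives is unambiguous — which is precisely what Corollary \ref{cor-dudcpsinomass} provides since good representatives differ only on pluripolar sets. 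Once these consistency checks are in place, the partition-of-unity sum is routine and the proof concludes.
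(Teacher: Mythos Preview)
Your proposal is correct and follows essentially the same partition-of-unity argument as the paper: apply the local formula (Proposition~\ref{pro-inter-onU}) on each chart with cutoff $\chi_i$, sum, and use $\sum_i d\chi_i = 0$ to kill the $d\chi_i\wedge\dc\psi\wedge R$ terms. One small slip: you write ``$\ddc\psi = \ddc\psi_i$'', which is false since $\ddc g_i$ need not vanish, but you immediately handle the $g_i$ contribution correctly anyway; note also that the paper has already extended the local formula to bounded quasi-psh $\psi$ in the paragraph just before the theorem, so in the paper's own proof this reduction is not repeated.
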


\proof
Let $(U_k)_{1 \le k \le l}$ be a covering of $X$ by local charts. Let $(\chi_k)_{1\le k \le l}$ be  a partition of unity subordinated to $(U_k)_{1 \le k \le l}$. We have 
\begin{align*}
\int_X \rho(v_1,\ldots, v_m) du \wedge \dc \psi \wedge R= \sum_{k=1}^l \int_{U_k} \chi_k \rho(v_1,\ldots, v_m) du \wedge \dc \psi \wedge R
\end{align*}
which is, by  Proposition \ref{pro-inter-onU}, equal to 
\begin{multline*}
-\sum_{k=1}^l   \int_{U_k} u \rho(v_1,\ldots, v_m) d\chi_k \wedge \dc \psi \wedge R -\\  \sum_{k=1}^l\int_{U_k} u \chi_k \sum_{j=1}^m \partial_j \rho(v_1,\ldots, v_m) d v_j \wedge \dc \psi \wedge R-\sum_{k=1}^l\int_{U_k} u \chi_k  \rho(v_1,\ldots, v_m) \ddc \psi \wedge R.
\end{multline*}
The first sum (in $k$) in the above expression is equal to 
$$\int_X u \rho(v_1,\ldots, v_m) d\big(\sum_{k=1}^l \chi_k \big)\wedge \dc \psi \wedge R=0$$
because $\sum_{k=1}^l \chi_k =1$, and $\supp \chi_k \subset U_k$.  We have also an analogous observation for the second and third sums. Hence we obtain the desired inequality. 
\endproof



\subsection{Energy estimates}

Recall that a dsh function is the difference of two quasi-psh functions. This notion was introduced in \cite{DS_allurepolynom}. A typical example of such functions is as follows. For two closed positive $(1,1)$-currents $R_1,R_2$ of the same cohomology class, we can write $R_1-R_2= \ddc \psi$ for some dsh function $\psi$.  Here is our key inequality.

\begin{proposition}\label{pro-sobo-etatoT} Let $M \ge 1$ and $p \ge 0$ be constants. Let $u \ge 1$  be an element in $W^{1,2}_*(X)$ such that $du \wedge \dc u \le T= \eta+ \ddc \psi$ with $0 \le \psi \le M$, where $\eta,T$ are closed positive $(1,1)$-current of bounded potentials, and $\psi$ is a dsh function. Then, for every $0 \le k \le n$, there holds  
$$\int_X u^p T^k \wedge \eta^{n-k} \le 2 M (p+1)^2 \int_X u^p T^{k+1} \wedge \eta^{n-k-1}.$$
Consequently,
$$\int_X u^p  T^k \wedge \eta^{n-k} \le 2^{n-k} (p+1)^{2(n-k)} M^{n-k} \int_X u^p T^n.$$
\end{proposition}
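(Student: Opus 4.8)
The plan is to prove the basic inequality
$$\int_X u^p T^k \wedge \eta^{n-k} \le 2M(p+1)^2 \int_X u^p T^{k+1}\wedge \eta^{n-k-1}$$
for a fixed $0 \le k \le n-1$, and then iterate it $(n-k)$ times to get the ``Consequently'' estimate. The idea behind the basic inequality is the standard trick of replacing one copy of $\eta$ by $T = \eta + \ddc\psi$ at the cost of a term $\int_X u^p\, \ddc\psi \wedge T^k\wedge \eta^{n-k-1}$, which one then integrates by parts using Theorem \ref{th-integrationbyparts} to convert it into a $d u \wedge \dc\psi$ term, and finally controls via the Cauchy--Schwarz inequality of Lemma \ref{le-cauchyschwarglobal} together with the hypothesis $du \wedge \dc u \le T$. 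Since $T$ has bounded potentials and $u \in W^{1,2}_*(X)$ is bounded (as $u \ge 1$ and we may first truncate $u$ from above — or note that the finiteness on the right forces control), all the currents $u^p T^k \wedge \eta^{n-k-1}\wedge(\cdot)$ and the measure $d u \wedge \dc\psi \wedge R$ for $R := T^k\wedge\eta^{n-k-1}$ (a wedge of $n-1$ closed positive currents of bounded potentials) are well-defined by the calculus developed in Section 2.

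First I would write $\eta = T - \ddc\psi$ in one factor, so that with $R := T^k \wedge \eta^{n-k-1}$,
$$\int_X u^p\, T^k\wedge \eta^{n-k} = \int_X u^p\, T^{k+1}\wedge\eta^{n-k-1} - \int_X u^p\, \ddc\psi \wedge R.$$
It then suffices to show $\int_X u^p\,\ddc\psi\wedge R \ge -(2M(p+1)^2 - 1)\int_X u^p T^{k+1}\wedge\eta^{n-k-1}$; in fact I will bound $\big|\int_X u^p\,\ddc\psi\wedge R\big|$. Apply Theorem \ref{th-integrationbyparts} with $m=1$, $v_1 = u$, $\rho(t) = t^p$ on $[1,\infty)$ (note $u \ge 1$ so $\rho$ is smooth there, with $\rho'(t) = pt^{p-1}$), and the bounded quasi-psh function $\psi$: this gives
$$\int_X u^p\, \ddc\psi\wedge R = \int_X \rho(u)\, dd^c\psi \wedge R = -\,p\int_X u\cdot u^{p-1}\, du\wedge\dc\psi\wedge R = -\,p\int_X u^{p}\, du\wedge\dc\psi\wedge R,$$
wait — more carefully, Theorem \ref{th-integrationbyparts} reads $\int_X \rho(v_1)\,du\wedge\dc\psi\wedge R = -\int_X u\,\rho'(v_1)\,dv_1\wedge\dc\psi\wedge R - \int_X u\,\rho(v_1)\,\ddc\psi\wedge R$; so I should instead take the identity with $u$ itself in the ``$u$'' slot and $\rho(u) = u^{p+1}$ — more precisely apply it to compute $\int_X u^{p+1}\ddc\psi\wedge R$ in terms of $\int_X u\cdot(p+1)u^p\,du\wedge\dc\psi\wedge R$, and separately handle the $u^p$ case, using that $u$ is bounded so all $\rho$'s have finite $\mathcal{C}^1$ norm on $[1,\|u\|_{L^\infty}]$. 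The upshot, after bookkeeping, is an identity of the shape
$$\int_X u^p\,\ddc\psi\wedge R = -\,c_p \int_X u^{p-1}\, du\wedge\dc\psi\wedge R$$
for a constant $c_p$ of size $O(p)$ (with $c_0 = 0$, consistent with the case $p=0$ being trivial).

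Next, to the right-hand side apply the Cauchy--Schwarz inequality of Lemma \ref{le-cauchyschwarglobal} with $f = u^{(p-1)/2}$, $g = u^{(p-1)/2}$ — or rather $f = u^{p/2}$ absorbing the factor appropriately — to get
$$\Big|\int_X u^{p-1}\,du\wedge\dc\psi\wedge R\Big| \le \Big(\int_X u^{p-1}\,T\wedge R\Big)^{1/2}\Big(\int_X u^{p-1}\, d\psi\wedge\dc\psi\wedge R\Big)^{1/2},$$
using $du\wedge\dc u \le T$ in the first factor. The second factor is bounded using $d\psi\wedge\dc\psi \le (\text{const})\cdot M\, \ddc(\text{something})$ — more directly, since $0 \le \psi \le M$, one has $d\psi\wedge\dc\psi = \ddc(\tfrac12\psi^2) - \psi\,\ddc\psi$, so $\int_X u^{p-1}\,d\psi\wedge\dc\psi\wedge R \le M\int_X u^{p-1}\,\ddc\psi\wedge R + (\text{sign-controlled terms})$, which recursively feeds back into quantities of the same type. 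Combining $T = \eta + \ddc\psi$ with $\eta, \ddc\psi$ both manageable, and doing the AM-GM step $ab \le \tfrac12(\lambda a^2 + \lambda^{-1}b^2)$ with $\lambda$ chosen to match the factor $2M(p+1)^2$, one absorbs the $\int_X u^p\,\ddc\psi\wedge R$ term and arrives at the claimed inequality; finally the iterated estimate follows by applying it successively for $k, k+1, \ldots, n-1$ and multiplying the constants.

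\textbf{Main obstacle.} I expect the delicate point to be the circular dependence: controlling $\int_X u^p\,\ddc\psi\wedge R$ leads, via integration by parts and Cauchy--Schwarz, to a term again involving $\int_X u^{p-1}\,\ddc\psi\wedge R$ (through $d\psi\wedge\dc\psi$), so one must set up the estimate so that this self-reference closes — either by choosing the AM-GM constant carefully so the bad term is absorbed with room to spare, or by first proving the statement for $\psi$ smooth (via standard regularizations of $u$ and the nice-convergence results of Section 2, e.g.\ Lemmas \ref{le-uknicelyuchico1duy2} and \ref{le-uknicelyuchico1duy2them}) and then passing to the limit. The rest — the integration by parts, the Cauchy--Schwarz step, the bookkeeping of constants of size $O((p+1)^2)$ and the final iteration — is routine given the machinery already established, in particular Theorem \ref{th-integrationbyparts}, Lemma \ref{le-cauchyschwarglobal}, and the fact that Monge--Ampère measures of bounded potentials charge no pluripolar set.
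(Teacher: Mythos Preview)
Your strategy is correct and matches the paper's: write $\eta = T - \ddc\psi$, integrate by parts using Theorem~\ref{th-integrationbyparts}, apply the Cauchy--Schwarz inequality of Lemma~\ref{le-cauchyschwarglobal} together with $du\wedge\dc u\le T$, and close a self-referential inequality. Your identity $\int_X u^p\,\ddc\psi\wedge R = -p\int_X u^{p-1}\,du\wedge\dc\psi\wedge R$ is exactly right (with $c_p=p$), and after Cauchy--Schwarz the quantity that reappears is precisely $J:=\int_X u^p\,d\psi\wedge\dc\psi\wedge R$.

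Where your proposal is vaguer than it needs to be is the closure, and here the paper's organisation is cleaner than the route through $d\psi\wedge\dc\psi=\ddc(\tfrac12\psi^2)-\psi\,\ddc\psi$ that you sketch. The paper makes $J$ the primary unknown and bounds it \emph{first}: apply Theorem~\ref{th-integrationbyparts} with $\psi$ in the bounded-function slot and $\rho(t)=t^{p}$, $v_1=u$, obtaining
\[
J \;=\; -\,p\!\int_X \psi\,u^{p-1}\,du\wedge\dc\psi\wedge R \;-\; \int_X \psi\,u^p\,\ddc\psi\wedge R \;=:\; I_1+I_2.
\]
Cauchy--Schwarz and $0\le\psi\le M$, $u\ge 1$ give $I_1\le Mp\,Q^{1/2}J^{1/2}$ with $Q:=\int_X u^p\,T\wedge R$. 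For $I_2$ one writes $\ddc\psi=T-\eta$ and uses $\psi\ge 0$, $T\ge 0$ to get $I_2\le M\int_X u^p\,\eta\wedge R$, and then the identity $\int_X u^p\,\eta\wedge R = Q + p\int_X u^{p-1}\,du\wedge\dc\psi\wedge R \le Q+p\,Q^{1/2}J^{1/2}$ (your own computation). Altogether $J\le MQ+2Mp\,Q^{1/2}J^{1/2}$, a quadratic inequality in $J^{1/2}$ that gives $J\le 4M^2(p+1)^2Q$. Feeding this back into $\int_X u^p\,\eta\wedge R\le Q+p\,Q^{1/2}J^{1/2}$ yields the basic inequality with constant $2M(p+1)^2$, and the iterated bound follows. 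Two housekeeping points you should make explicit: reduce at the outset to bounded $u$ by replacing $u$ with $\min\{u,c\}$ and letting $c\to\infty$ (using Lemma~\ref{le-chuanLpcuauWsao} and dominated convergence), so that Theorem~\ref{th-integrationbyparts} applies; and there is no need to regularise $\psi$ --- the integration-by-parts theorem already handles bounded quasi-psh $\psi$.
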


\proof The second desired inequality follows from the first one by induction. We prove now the first desired estimate.  It suffices to consider the case where $u$ is bounded. The general case follows by considering $\min\{u,c\}$ instead of $u$ ($c>1$ is a constant) and letting $c \to \infty$ (we use here Lemma \ref{le-chuanLpcuauWsao} and Lebesgue's dominated convergence theorem). Put $R:= T^k \wedge \eta^{n-k-1}$. The desired inequality reads as
$$\int_X u^p \eta \wedge R \le 2 M (p+1)^2 \int_X u^p T \wedge R.$$
We recall that $\psi$ itself is an element in $W^{1,2}_*(X)$ because $\psi$ is the difference of two bounded quasi-psh functions. Let 
$$J:= \int_X u^p  d \psi \wedge \dc \psi \wedge R.$$
By Theorem \ref{th-integrationbyparts} and the hypothesis that potentials of $T,\eta$ are bounded, we obtain
\begin{align*}
J = - p \int_X u^{p-1} \psi du \wedge \dc \psi \wedge R - \int_X u^p \psi \ddc \psi \wedge R.
\end{align*}
Let $I_1, I_2$ denote the first and second terms in the right-hand side of the last inequality respectively. Writing 
$$u^{p-1} d u \wedge \dc \psi= \big( u^{p/2-1} d u \big) \wedge \big(  u^{p/2} \dc \psi\big)$$
 and using Cauchy-Schwarz inequality (Lemma \ref{le-cauchyschwarglobal}), we get
\begin{align*}
I_1 & \le  p \bigg( \int_X  \psi u^{p-2} T \wedge R \bigg)^{1/2} \bigg(\int_X  \psi u^{p} d \psi \wedge \dc \psi \wedge R\bigg)^{1/2}\\
& \le  M p \bigg( \int_X u^{p-2} T \wedge R \bigg)^{1/2} \bigg(\int_X u^{p} d \psi \wedge \dc \psi \wedge R\bigg)^{1/2}\\
& \le   M p \bigg( \int_X u^{p} T \wedge R \bigg)^{1/2} \bigg(\int_X u^{p} d \psi \wedge \dc \psi \wedge R\bigg)^{1/2}
\end{align*}
because $u\ge 1$.  On the other hand,
\begin{align*}
I_2 &= -\int_X u^p \psi (\ddc \psi+ \eta) \wedge R+ \int_X \psi u^p \eta \wedge R\\
& \le M\int_X u^p \eta \wedge R.
\end{align*}
Observe
\begin{align*}
\int_X u^p \eta \wedge R  & = \int_X u^p T \wedge R- \int_X u^p \ddc \psi \wedge R\\
& \le \int_X u^p T \wedge R+ p \int_X u^{p-1} d u \wedge \dc \psi \wedge R\\
& \le \int_X u^p T \wedge R+ p \int_X u^{p-1} d u \wedge \dc \psi \wedge R\\ 
& \le \int_X u^p T \wedge R+  p \bigg( \int_X u^{p-2} T \wedge R \bigg)^{1/2} \bigg(\int_X u^{p} d \psi \wedge \dc \psi \wedge R\bigg)^{1/2}\\
& \le \int_X u^p T \wedge R+  p \bigg( \int_X u^{p} T \wedge R \bigg)^{1/2} \bigg(\int_X u^{p} d \psi \wedge \dc \psi \wedge R\bigg)^{1/2}
\end{align*}
by the above estimates for $I_1$ and the fact that $u \ge 1$. Hence,  we get 
$$I_2 \le M\int_X u^p T \wedge R+  M p \bigg( \int_X u^{p} T \wedge R \bigg)^{1/2} \bigg(\int_X u^{p} d \psi \wedge \dc \psi \wedge R\bigg)^{1/2}$$
It follows that
$$J \le M \int_X u^p T \wedge R+ 2M p \bigg( \int_X u^{p} T \wedge R \bigg)^{1/2} J^{1/2}.$$
Hence we get 
\begin{align}\label{ine-Lpdpsidpsi2phay}
J \le 4M^2(p+1)^2 \int_X u^p T \wedge R.
\end{align}
Now consider
\begin{align*} 
\int_X u^p \eta \wedge R  &= \int_X u^p T \wedge R- \int_X u^p \ddc \psi \wedge R \\
&= \int_X u^p T \wedge R + p \int_X u^{p-1}  du \wedge \dc \psi \wedge R \\
&\le  \int_X u^p T \wedge R + p \big(\int_X u^{p-2}  T \wedge R \big)^{1/2} \big(\int_X u^{p} d\psi \wedge \dc \psi \wedge R \big)^{1/2}\\
& \le \int_X u^p T \wedge R + p \big(\int_X u^{p}  T \wedge R \big)^{1/2} \big(\int_X u^{p} d\psi \wedge \dc \psi \wedge R \big)^{1/2}\\
& \le  (1+ 2 M (p+1)p)  \int_X u^p T \wedge R\\
& \le 2 M(p+1)^2 \int_X u^p T \wedge R
\end{align*}
by (\ref{ine-Lpdpsidpsi2phay}). This finishes the proof.
\endproof

We also need the following modified version of Proposition \ref{pro-sobo-etatoT}, in which $u$ is allowed to be close to $0$. 

\begin{proposition}\label{pro-sobo-etatoTsuadi} Let $M \ge 1$ and $p \ge 2n$ be constants. Let $u \ge 0$  be an element in $W^{1,2}_*(X)$ such that $du \wedge \dc u \le T= \eta+ \ddc \psi$ with $0 \le \psi \le M$, where $\eta,T$ are closed positive $(1,1)$-current of bounded potentials. Then, for every $0 \le k \le n$, there holds  
$$\int_X u^p T^k \wedge \eta^{n-k} \le 2 M (p+1)^2 \bigg(\int_X u^p T^{k+1} \wedge \eta^{n-k-1}+\int_X u^{p-2} T^{k+1} \wedge \eta^{n-k-1} \bigg).$$
Consequently,
$$\int_X u^p T^k \wedge \eta^{n-k} \le 2^{2(n-k)} (p+1)^{2(n-k)} M^{n-k} \int_X (u^p+ u^{p-2n}) T^n.$$
\end{proposition}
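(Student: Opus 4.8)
The plan is to mimic the proof of Proposition \ref{pro-sobo-etatoT} but replace $u$ by $\wi u := (u^2 + 1)^{1/2}$, or equivalently work directly with $u$ while carrying an extra additive term to absorb the region where $u$ is small. As in the previous proof, the second inequality follows from the first by induction on $k$ (starting from $k=n$ downwards), so the main task is the one-step estimate; and by Lemma \ref{le-chuanLpcuauWsao} together with Lebesgue's dominated convergence theorem it suffices to treat bounded $u$. Put $R := T^k \wedge \eta^{n-k-1}$, so the target inequality reads $\int_X u^p \eta \wedge R \le 2M(p+1)^2(\int_X u^p T \wedge R + \int_X u^{p-2} T \wedge R)$. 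The point of the hypothesis $p \ge 2n$ is that $u^{p-2}$ is still a legitimate (nonnegative, bounded when $u$ is bounded) weight and, upon iterating $n-k \le n$ times, the exponent only decreases by at most $2n$, which keeps it nonnegative; this is what produces the $u^{p-2n}$ term in the final bound.

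First I would set $J := \int_X u^p \, d\psi \wedge \dc\psi \wedge R$ and, exactly as before, apply the integration by parts formula (Theorem \ref{th-integrationbyparts}) with $\rho(t) = t^p$ — legitimate since $u$ is bounded — to get $J = -p\int_X u^{p-1}\psi\, du \wedge \dc\psi \wedge R - \int_X u^p \psi\, \ddc\psi \wedge R =: I_1 + I_2$. The Cauchy–Schwarz inequality (Lemma \ref{le-cauchyschwarglobal}), applied to the splitting $u^{p-1}\,du\wedge\dc\psi = (u^{p/2-1}du)\wedge(u^{p/2}\dc\psi)$, gives
\[
I_1 \le Mp\Bigl(\int_X u^{p-2} T \wedge R\Bigr)^{1/2}\Bigl(\int_X u^p\, d\psi\wedge\dc\psi\wedge R\Bigr)^{1/2} = Mp\Bigl(\int_X u^{p-2}T\wedge R\Bigr)^{1/2} J^{1/2},
\]
where now — since we can no longer use $u \ge 1$ to bound $u^{p-2} \le u^p$ — we must \emph{keep} the $u^{p-2}$ weight rather than upgrade it. For $I_2$, writing $\ddc\psi = T - \eta$ and using $0 \le \psi \le M$ gives $I_2 \le M\int_X u^p \eta \wedge R$, and then estimating $\int_X u^p\eta\wedge R = \int_X u^p T\wedge R + p\int_X u^{p-1}du\wedge\dc\psi\wedge R$ by Cauchy–Schwarz again bounds $\int_X u^p\eta\wedge R$ by $\int_X u^p T\wedge R + p(\int_X u^{p-2}T\wedge R)^{1/2}J^{1/2}$. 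Combining the estimates for $I_1$ and $I_2$ yields
\[
J \le M\int_X u^p T\wedge R + 2Mp\Bigl(\int_X u^{p-2}T\wedge R\Bigr)^{1/2} J^{1/2},
\]
and solving this quadratic inequality in $J^{1/2}$ gives $J \le 2M\int_X u^p T\wedge R + 4M^2p^2\int_X u^{p-2}T\wedge R \le 4M^2(p+1)^2(\int_X u^p T\wedge R + \int_X u^{p-2}T\wedge R)$.

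Feeding this bound on $J$ back into $\int_X u^p\eta\wedge R = \int_X u^p T\wedge R + p(\int_X u^{p-2}T\wedge R)^{1/2}J^{1/2}$ and using $2ab \le a^2+b^2$ on the cross term gives the claimed one-step inequality $\int_X u^p\eta\wedge R \le 2M(p+1)^2(\int_X u^p T\wedge R + \int_X u^{p-2}T\wedge R)$. Finally, iterating from $k$ up to $n$: at each step the two integrals on the right, with exponents $p$ and $p-2$, each spawn a pair with exponents lowered by $2$, but since $p \ge 2n$ all exponents stay in $[0,p]$ and $u^j \le u^p + u^{p-2n}$ for every $j \in [p-2n, p]$ when combined with the trivial bound by the two extreme powers; collecting the $2^{2(n-k)}(p+1)^{2(n-k)}M^{n-k}$ from the $n-k$ steps gives the second displayed inequality. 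The main obstacle — mild — is the bookkeeping in this iteration: one must check that no exponent drops below $0$ (this is precisely where $p \ge 2n$ is used) and that the binomial proliferation of terms is uniformly controlled by $u^p + u^{p-2n}$, for which it is cleanest to prove by induction on $n-k$ the slightly stronger statement that $\int_X u^p T^k\wedge\eta^{n-k}$ is bounded by $2^{2(n-k)}(p+1)^{2(n-k)}M^{n-k}\sum_{i=0}^{n-k}\int_X u^{p-2i}T^n$ and then absorb the middle exponents at the end using $u^{p-2i} \le u^p + u^{p-2n}$.
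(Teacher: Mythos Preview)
Your proposal is correct and follows essentially the same route as the paper: reduce to bounded $u$, keep the notation $J, I_1, I_2, R$ from Proposition~\ref{pro-sobo-etatoT}, observe that without $u\ge 1$ one must retain the $u^{p-2}$ weight (the paper packages this as $Q:=\int_X u^{p-2}T\wedge R+\int_X u^{p}T\wedge R$), solve the quadratic in $J^{1/2}$ to get $J\le 4M^2(p+1)^2 Q$, and then feed this back to obtain the one-step bound. Your iteration argument, using $u^{s}\le u^p+u^{p-2n}$ for $p-2n\le s\le p$ and the binomial bookkeeping, is exactly what the paper does (more tersely) to deduce the second inequality with the constant $2^{2(n-k)}(p+1)^{2(n-k)}M^{n-k}$.
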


\proof The second desired inequality follows from the first one by induction and the observation that $u^s \le u^p+ u^{p-2n}$ for every $p-2n \le s \le p$ (here we used the hypothesis that $p-2n\ge 0$). We prove now the first desired estimate. The proof is essentially the same as that of Proposition \ref{pro-sobo-etatoT}. One just has to note that since we no longer have $u \ge 1$, we don't have $u^{p-2} \le u^p$.  Let the notations be as in the proof of Proposition \ref{pro-sobo-etatoT}. Put 
$$Q:=\int_X u^{p-2} T \wedge R+\int_X u^{p} T \wedge R.$$
By the proof of Proposition \ref{pro-sobo-etatoT}, we see that 
$$I_1 \le Mp Q^{1/2} J^{1/2}, \quad I_2 \le M Q+ M p Q^{1/2} J^{1/2}.$$
Thus
$$J \le M Q+ 2Mp Q^{1/2} J^{1/2}.$$
We infer that 
$$J \le 4M^2(p+1)^2 Q.$$
The rest is now done as in the proof of Proposition \ref{pro-sobo-etatoT}. 
\endproof

\section{Proof of main results}

 We first prove Theorem \ref{the-main-univolume}. We recall the following standard result.

\begin{lemma} \label{le-Moseriteration} Let $C>0,\lambda \ge 1, \beta>1$ be constants.  Let $\mu$ be a probability measure and let $u$ be a $\mu$-measurable function such that for every constant $p \ge \lambda$, we have 
$$\|u\|_{L^{p\beta}(\mu)}^{p} \le  C p^{C} \|u\|_{L^p(\mu)}^p, \quad \|u\|_{L^\lambda(\mu)} \le C.$$ 
Then there exists a constant $M$ depending only on $C$ and $\beta$ such that
$$\|u\|_{L^\infty(\mu)} \le M,$$
 where $\|u\|_{L^\infty(\mu)}$ denotes the essential supremum of $u$ with respect to $\mu$. 
\end{lemma}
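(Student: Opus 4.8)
The plan is to iterate the given reverse-H\"older-type inequality along the geometric sequence of exponents $\lambda, \lambda\beta, \lambda\beta^2, \ldots$ and control the resulting infinite product. First I would set $p_k := \lambda \beta^k$ for $k \ge 0$, and define $a_k := \|u\|_{L^{p_k}(\mu)}$. The hypothesis with $p = p_k$ reads $a_{k+1}^{p_k} \le C p_k^{C} a_k^{p_k}$, i.e. $a_{k+1} \le (C p_k^C)^{1/p_k} a_k = (C \lambda^C \beta^{Ck})^{1/(\lambda\beta^k)} a_k$. Taking logarithms, $\log a_{k+1} \le \log a_k + \frac{1}{\lambda \beta^k}\bigl(\log C + C\log\lambda + Ck\log\beta\bigr)$, so by telescoping from $k=0$,
$$\log a_{K} \le \log a_0 + \sum_{k=0}^{K-1} \frac{\log C + C\log\lambda + Ck\log\beta}{\lambda\beta^k}.$$

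The key step is that the series $\sum_{k\ge 0} \frac{\log C + C\log\lambda + Ck\log\beta}{\lambda\beta^k}$ converges, since $\beta > 1$ forces both $\sum \beta^{-k}$ and $\sum k\beta^{-k}$ to converge, with sums $\frac{1}{1-\beta^{-1}}$ and $\frac{\beta^{-1}}{(1-\beta^{-1})^2}$ respectively; call the total sum $S = S(C,\lambda,\beta)$. Then $\log a_K \le \log a_0 + S$ for every $K$, hence $a_K \le a_0 e^{S} \le C e^S$ using the second hypothesis $a_0 = \|u\|_{L^\lambda(\mu)} \le C$. Since $\mu$ is a probability measure, $\|u\|_{L^\infty(\mu)} = \lim_{K\to\infty} \|u\|_{L^{p_K}(\mu)} = \lim_{K\to\infty} a_K \le C e^S =: M$, and $M$ depends only on $C$ and $\beta$ (note $\lambda$ is bounded in terms of $C$ via $a_0 \le C$, or one simply absorbs $\lambda$ into the constant; in any case one checks $S$ is finite). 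This gives the claimed bound.

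The only mild subtlety — and the place to be slightly careful rather than the "main obstacle," since the lemma is genuinely standard — is the passage $\|u\|_{L^{p_K}(\mu)} \to \|u\|_{L^\infty(\mu)}$: this holds for any measure of finite total mass once we know the $L^{p_K}$ norms stay bounded, because $\|u\|_{L^p(\mu)}$ is nondecreasing in $p$ for a probability measure and its limit equals the essential supremum (possibly $+\infty$ a priori, but here it is bounded by $Ce^S$). One should also observe that the hypothesis is only assumed for $p \ge \lambda$, which is exactly what the iteration uses since all $p_k \ge \lambda$. I would keep the write-up to essentially the telescoping computation above plus the one-line remark on convergence of $\sum k\beta^{-k}$.
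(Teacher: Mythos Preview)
Your proof is correct and follows essentially the same Moser iteration as the paper: take logarithms, telescope along $p_k=\lambda\beta^k$, and use convergence of $\sum \beta^{-k}$ and $\sum k\beta^{-k}$. One small clarification: your remark that ``$\lambda$ is bounded in terms of $C$ via $a_0\le C$'' is not right; the reason $M$ depends only on $C,\beta$ is simply that $\lambda\ge 1$ forces $1/\lambda\le 1$ and $(\log\lambda)/\lambda\le 1/e$, so $S$ is bounded independently of $\lambda$.
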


\proof This is just Moser's iteration.  By hypothesis, one obtains
$$\log \|u\|_{L^{p\beta}(\mu)}\le \frac{\log C}{p} + \frac{C\log p}{ p}+\log  \|u\|_{L^{p}(\mu)}.$$ 
Applying the last inequality successively for $p:=\lambda\beta^l$ with $0 \le l \le k$ and summing up, we get
$$\log \|u\|_{L^{\lambda\beta^k}(\mu)} \le \lambda^{-1}\log C \sum_{j=0}^k \beta^{-j} + C \lambda^{-1}\log \big(\lambda\beta\big) \sum_{j=0}^k (j+1) \beta^{-j}+  \log \|u\|_{L^{\lambda}(\mu)}.$$
Letting $k \to \infty$ gives
$$\log \|u\|_{L^\infty(\mu)} \le C_3,$$
for some constant $C_3>0$ depending only on $C$ and $\beta$. This finishes the proof. 
\endproof

In what follows, the letters $C,C_1,C_2,\ldots$ denote constants depending only on $X, \omega_X, A,K$ and their values can be different from line to line. 
Let $$\omega \in \tilde{\mathcal{V}}(X,\omega_X,n,A,p,K).$$ 
Recall that $\omega$ is not smooth in general, but it is  a closed positive current of bounded potentials. To be precise, we can write $\omega= \theta+ \ddc \psi$, for some smooth form $\theta$ and bounded $\theta$-psh function $\psi$ such that $\|\theta\|_{\mathcal{C}^2}$ is bounded by a constant independent of $\omega$, and $\|\psi\|_{L^\infty}<\infty$.

Write $V_\omega^{-1}\omega^n= f \omega_X^n$, for $f \ge 0$ so that $\int_X f \omega_X^n = 1$ and $\|f\|_{L^1 \log L^p} \le K$. Put $f_1:= \max\{f, \frac{1}{2}\}$. We see that 
$$1= \int_X f \omega_X^n \le \int_X f_1 \omega_X^n = \int_{\{f \ge 1/2\}} f \omega_X^n+ \int_{\{f < 1/2\}} \frac{1}{2} \omega_X^n \le  1+ V_{\omega_X}/2.$$ 
Since $f_1 \ge 1/2$, we can find a smooth function $f_2$ so that $f_2 \ge f_1/2 \ge 1/4$ and 
$$\|f_2\|_{L^1 \log L^p} \le 2 \|f_1\|_{L^1 \log L^p} \le 2K, \quad 1/2 \le \int_X f_2 \omega_X^n \le 1+V_{\omega_X}.$$
Let $\eta$ be the unique closed positive $(1,1)$-current cohomologous to $\omega$ such that 
$$V_\omega^{-1}\eta^n = c f_2 \omega_X^n,$$
where $c:= \frac{1}{\int_X f_2 \omega_X^n}  \in [(1+ V_{\omega_X})^{-1},2]$. 
Observe that 
$$V_\omega^{-1} \omega^n= f \omega_X^n \le f_1 \omega_X^n \le 2f_2 \omega_X^n=  2c^{-1} V_\omega^{-1} \eta^n.$$
Thus we obtain the following crucial inequality
\begin{align}\label{ine-sosanhomegaeta}
\omega^n \le 2c^{-1} \eta^n.
\end{align}
Moreover by \cite[Corollary 4.1]{Guo-Phong-Song-Sturm} (which extends naturally to the case of big and semi-positive classes), there exists a constant $M>0$ independent of $\omega$ such that 
\begin{align}\label{ine-hieucuaomegavaeta}
\omega- \eta= \ddc \tilde{\psi},
\end{align}
for some dsh function $\tilde{\psi}$ with $0 \le \psi \le M$. 

For every constant $\epsilon \in (0,1]$, let $\eta_\epsilon$ be the unique smooth K\"ahler form in $\{\omega\}+ \epsilon \{\omega_X\}$ such that 
$$V_{\eta_\epsilon}^{-1}\eta_\epsilon^n= c f_2 \omega_X^n,$$
 We see that $\eta_\epsilon$ belongs to $\mathcal{W}(X,\omega_X,n,A',p,K', \frac{1}{4(1+V_{\omega_X})})$ (here the constant $\frac{1}{4(1+V_{\omega_X})}$ means the constant function on $X$ whose value is equal to $\frac{1}{4(1+V_{\omega_X})}$) for some constants $A' \ge A,K' \ge K$ depending only on $X,\omega_X, A,p,K$. 
By \cite{Guo-Phong-Song-Sturm2}, there are constants $\beta>1$ (depending only on $p,n$) and $C>0$ such that 
\begin{align}\label{ine-sobolev-def0them}
\bigg(\frac{1}{V_{\omega'}}\int_X |u|^{2 \beta} \omega'^n\bigg)^{\frac{1}{\beta}} \le C \bigg( \frac{1}{V_{\omega'}} \int_X d u \wedge \dc u \wedge \omega'^{n-1}+ \frac{1}{V_{\omega'}}\int_X |u|^2 \omega'^n \bigg),
\end{align}
for every $u \in W^{1,2}(X)$ and every $\omega' \in \mathcal{W}(X,\omega_X,n,A',p,K', \frac{1}{4(1+V_{\omega_X})})$.   

Write $\eta=\theta+\ddc u$ for some bounded $\theta$-psh function $u$ with $\sup_X u=0$, and  $\eta_\epsilon= \theta+ \epsilon \omega_X+ \ddc u_\epsilon$ for some bounded $(\theta+\epsilon \omega_X)$-psh function $u_\epsilon$ with $\sup_X u_\epsilon=0$. The sequence $(u_\epsilon)$ is bounded uniformly in $\epsilon$ (but the bound depends on $\theta$). It is well-known that $u_\epsilon$ converges to $u$ in $L^1$ as $\epsilon \to 0$; see \cite{BEGZ}.
 
\begin{lemma}\label{le-chanduoidayulambda}  For every constant $\delta>0$, there exists a constant $\epsilon_\delta>0$ (depending on $\theta,\delta$) such that for every $0<\epsilon<\epsilon_\delta$, we have $u_\epsilon \ge u -\delta$.
\end{lemma}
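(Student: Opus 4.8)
The statement to prove is Lemma~\ref{le-chanduoidayulambda}: for every $\delta>0$ there is $\epsilon_\delta>0$ so that $u_\epsilon\ge u-\delta$ on $X$ whenever $0<\epsilon<\epsilon_\delta$. Here $\eta=\theta+\ddc u$ and $\eta_\epsilon=\theta+\epsilon\omega_X+\ddc u_\epsilon$ are the solutions of the complex Monge--Amp\`ere equations $V_\omega^{-1}\eta^n=cf_2\,\omega_X^n$ and $V_{\eta_\epsilon}^{-1}\eta_\epsilon^n=cf_2\,\omega_X^n$ respectively, both normalized by $\sup_X u=\sup_X u_\epsilon=0$.

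The plan is to argue by a comparison/stability estimate for Monge--Amp\`ere equations. First I would rewrite the two equations with respect to a common reference form. Since $\eta_\epsilon$ lies in the class $\{\omega\}+\epsilon\{\omega_X\}$, whose volume $V_{\eta_\epsilon}=\int_X(\theta+\epsilon\omega_X)^n$ tends to $V_\omega$ as $\epsilon\to 0$, the two densities $V_\omega^{-1}cf_2\,\omega_X^n$ and $V_{\eta_\epsilon}^{-1}cf_2\,\omega_X^n$ differ only by the scalar factor $V_\omega/V_{\eta_\epsilon}=1+O(\epsilon)$. Thus $\eta$ solves, up to the class, almost the same equation as $\eta_\epsilon$ but on a nearby (smaller) cohomology class. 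The key point is a quantitative version of the known fact that $u_\epsilon\to u$ in $L^1$: I want an $L^\infty$ one-sided bound $u_\epsilon\ge u-\delta$. The natural tool is Ko\l odziej-type stability estimates for the Monge--Amp\`ere equation in big and semi-positive classes (\cite{BEGZ}, or \cite{Guo-Phong-Tong}): if two bounded potentials solve Monge--Amp\`ere equations with comparable $L^1\log L^p$ densities on comparable classes, then the $L^\infty$ distance between them is controlled by (a power of) the $L^1$ distance plus an error coming from the difference of the classes; since $u_\epsilon\to u$ in $L^1$ and the classes converge, this error tends to $0$.

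More concretely, the steps I would carry out: (1) Fix the smooth reference $\theta$ and observe $\theta+\epsilon\omega_X\ge\theta$, so every $\theta$-psh function is $(\theta+\epsilon\omega_X)$-psh; in particular $u$ is a candidate competitor for the $\epsilon$-problem. (2) Use the comparison principle: on the open set $\{u_\epsilon<u-\delta\}$ one has $\eta^n=(\theta+\ddc u)^n\le(\theta+\epsilon\omega_X+\ddc u)^n$, and comparing this with $\eta_\epsilon^n$ on the same set, together with $V_\omega^{-1}\eta^n$ and $V_{\eta_\epsilon}^{-1}\eta_\epsilon^n$ having the \emph{same} density $cf_2\,\omega_X^n$ up to the factor $V_\omega/V_{\eta_\epsilon}$, forces this set to have small measure (controlled by $|V_\omega/V_{\eta_\epsilon}-1|=O(\epsilon)$ and by $\|u-u_\epsilon\|_{L^1}$). (3) Convert the smallness of the capacity/volume of $\{u_\epsilon<u-\delta\}$ into emptiness of $\{u_\epsilon<u-2\delta\}$ via the standard Ko\l odziej iteration (the function $g(s):=\capacity(\{u_\epsilon<u-s\})$ satisfies a differential inequality $s\,g(s)^{1/n}\le C\,g(s+\cdot)$, forcing $g$ to vanish beyond a threshold that shrinks with $\epsilon$). (4) Choose $\epsilon_\delta$ so that the threshold is below $\delta$.

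The main obstacle, I expect, is the bookkeeping at step (2)--(3): one must set up the comparison so that the \emph{bad} set $\{u_\epsilon<u-\delta\}$ is genuinely where the two Monge--Amp\`ere masses can be compared, handling the extra $\epsilon\omega_X$ term (which only helps, since it makes $\eta_\epsilon^n$ larger) and the scalar discrepancy $V_\omega/V_{\eta_\epsilon}\to1$ cleanly, and then feeding this into Ko\l odziej's iteration with constants uniform in $\epsilon$ (which is fine because $f_2$ is a fixed smooth function and the classes stay in a fixed compact family). A slightly delicate point is that $\eta$ itself is only a bounded-potential current, not smooth, but this causes no trouble: the comparison principle and stability estimates hold for bounded quasi-psh functions. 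Once these are in place the conclusion is immediate: $u_\epsilon\ge u-\delta$ for $\epsilon<\epsilon_\delta$.
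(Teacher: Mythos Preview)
Your proposal is correct and follows essentially the same approach as the paper: both observe that $u$ is $(\theta+\epsilon\omega_X)$-psh, then invoke Ko\l odziej-type stability estimates (capacity bound on $\eta_\epsilon^n$ coming from $f_2\in L^1\log L^{p_0}$ with $p_0>n$, followed by the standard iteration) to convert the known $L^1$ convergence $u_\epsilon\to u$ into the one-sided $L^\infty$ bound $u-u_\epsilon\le\delta$. The paper simply cites the stability result (e.g.\ Guedj--Zeriahi) as a black box $u-u_\epsilon\le h(\|u_\epsilon-u\|_{L^1})$ with $h(0)=0$, whereas you unpack the comparison-principle/capacity-iteration mechanism explicitly; but the argument is the same.
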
 

\proof Observe that $u$ is $(\theta+ \epsilon\omega_X)$-psh. Thus we can apply the standard Ko{\l}odziej's capacity estimate to $u, u_\epsilon$ as $(\theta+ \epsilon \omega_X)$-psh functions. Since $f_2 \in L^1 \log L^p$ with $p>n$,  we recall that $\mu:= (\ddc u_\epsilon+\theta+\epsilon\omega_X)^n$ satisfies
\begin{align}\label{ine-capthetaptrun}
\mu(E) \le \big(\capK_{\theta+\epsilon \omega_X}(E)\big)^{1+ (p-n)/n};
\end{align}
see \cite[Remark 1.10]{DDG-family} and also \cite[Chapter 4]{Kolodziej05}. Since $p>n$, combining (\ref{ine-capthetaptrun}) with routine arguments (e.g, see the proof of \cite[Proposition 5.3]{Guedj-Zeriahi-big-stability}) shows that $u- u_\epsilon \le h(\|u_\epsilon -u\|_{L^1})$, where $h:\R_{\ge 0} \to \R_{\ge 0}$ is a continuous function with $h(0)=0$. This combined with the fact that $u_\epsilon \to u$ in $L^1$ gives the desired assertion.
\endproof

\begin{lemma} \label{le-Twedeetaepsi} We have that $\eta_\epsilon^n$  converges weakly to $\eta^n$ as $\epsilon \to 0$. Furthermore, for every closed positive $(1,1)$-current $T$ of bounded potentials, one gets $T\wedge \eta_\epsilon^{n-1} \to T\wedge \eta^{n-1}$ weakly as $\epsilon \to 0$.  
\end{lemma}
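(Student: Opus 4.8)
The plan is to reduce both assertions to the single fact that the potentials converge \emph{uniformly} — that is, $\|u_\epsilon - u\|_{L^\infty(X)} \to 0$ — after which the Bedford--Taylor continuity theorem does the rest.

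The convergence $\eta_\epsilon^n \to \eta^n$ is already immediate from the construction. Indeed $\eta_\epsilon^n = V_{\eta_\epsilon}\, c f_2\, \omega_X^n$ and $\eta^n = V_\omega\, c f_2\, \omega_X^n$, while $V_{\eta_\epsilon} = (\{\omega\}+\epsilon\{\omega_X\})^n$ depends polynomially on $\epsilon$ and hence tends to $\{\omega\}^n = V_\omega$; thus $\eta_\epsilon^n - \eta^n = (V_{\eta_\epsilon}-V_\omega)\, c f_2\, \omega_X^n \to 0$, even in total variation. (This also falls out of the uniform convergence below, as the special case of the argument for $T\wedge\eta_\epsilon^{n-1}$ in which all factors coincide.)

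To establish the uniform convergence I would combine two one-sided bounds. Lemma \ref{le-chanduoidayulambda} already provides, for each $\delta>0$, that $u_\epsilon \ge u - \delta$ once $\epsilon$ is small enough. For the reverse inequality, note that for $\epsilon \le 1$ every $u_\epsilon$ is $(\theta+\omega_X)$-psh, that the family $(u_\epsilon)_\epsilon$ is uniformly bounded, and that $u_\epsilon \to u$ in $L^1$; the classical Hartogs-type convergence lemma for quasi-psh functions on the compact manifold $X$ then gives $u_\epsilon \le u+\delta$ for $\epsilon$ small. Together these force $\|u_\epsilon - u\|_{L^\infty(X)} \to 0$.

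Finally, for a closed positive $(1,1)$-current $T$ of bounded potentials I would prove $T\wedge\eta_\epsilon^{n-1}\to T\wedge\eta^{n-1}$ locally, using a finite atlas and a partition of unity. In a chart write $T = \ddc\phi$ with $\phi$ bounded psh, and $\theta = \ddc g$, $\theta+\epsilon\omega_X = \ddc g_\epsilon$ with $g_\epsilon$ smooth and $g_\epsilon \to g$ in $\mathcal{C}^2$; then $\eta = \ddc(g+u)$, $\eta_\epsilon = \ddc(g_\epsilon+u_\epsilon)$, with $g_\epsilon+u_\epsilon \to g+u$ uniformly and all these potentials uniformly bounded. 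The Bedford--Taylor continuity theorem for mixed Monge--Amp\`ere products under locally uniform convergence of bounded psh functions then yields
$$\ddc\phi\wedge\big(\ddc(g_\epsilon+u_\epsilon)\big)^{n-1} \to \ddc\phi\wedge\big(\ddc(g+u)\big)^{n-1}$$
weakly, which is exactly the assertion on the chart. The only delicate point is the uniform convergence $u_\epsilon \to u$: its lower half is precisely the Ko\l odziej-stability content of Lemma \ref{le-chanduoidayulambda}, and for the upper half one must ensure the Hartogs lemma is applied in the quasi-psh setting with a varying reference form, which is legitimate since each $u_\epsilon$ ($\epsilon\le1$) may be regarded as $(\theta+\omega_X)$-psh. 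Everything after that is a routine appeal to Bedford--Taylor theory.
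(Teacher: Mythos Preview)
Your approach is different from the paper's. You aim for full uniform convergence $\|u_\epsilon - u\|_{L^\infty}\to 0$ and then invoke Bedford--Taylor; the paper instead uses only the one-sided bound $u_\epsilon \ge u - \delta$ from Lemma~\ref{le-chanduoidayulambda} together with $u_\epsilon \to u$ in $L^1$, and appeals to a continuity theorem for Monge--Amp\`ere products valid precisely under this weaker hypothesis (sequences bounded below by $u-\delta$ rather than uniformly convergent); see \cite[Theorem~2.2]{Viet-generalized-nonpluri}. This sidesteps the upper bound entirely.

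The gap in your argument is the Hartogs step. The Hartogs lemma yields $u_\epsilon \le u + \delta$ uniformly on $X$ from $L^1$ convergence only when the limit $u$ is \emph{continuous}; for a merely bounded quasi-psh limit one obtains just the pointwise inequality $\limsup_{\epsilon\to 0} u_\epsilon(x) \le u(x)$, which does not upgrade to a uniform bound. Here $u$ solves $(\theta+\ddc u)^n = cV_\omega f_2\,\omega_X^n$ in the semi-positive big class $\{\theta\}$ with smooth strictly positive density, and its continuity is indeed true but is a nontrivial regularity theorem for degenerate complex Monge--Amp\`ere equations, not a consequence of the setup or of Lemma~\ref{le-chanduoidayulambda}. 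Once you supply that ingredient your route goes through; the paper's route is cleaner because it never needs it.
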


\proof The first desired assertion is clear from the definition of $\eta_\epsilon$ and the fact that $V_{\eta_\epsilon} \to V_\eta$ as $\epsilon \to 0$. We check the second desired property.

By Lemma \ref{le-chanduoidayulambda}, we know that for every constant $\delta>0$, there exists a number $\epsilon_\delta>0$ such that  $u_\epsilon \ge u - \delta$ for every $0< \epsilon \le \epsilon_\delta$. Note that we don't claim that the sequence $u_\epsilon$ decreases to $u$, but they are bounded from below by $u-\delta$ in the above sense. The usual continuity properties of Monge-Amp\`ere operators still hold for such a sequence; see, e.g, \cite[Theorem 2.2]{Viet-generalized-nonpluri}. Consequently, we get the desired convergence. 
\endproof

\begin{lemma}\label{le-sobolevchoetaepsilon}
Let $u \in W^{1,2}_*(X)$ be a  positive function such that $u \ge 1$, and   $d u \wedge \dc  u \le T$, for some closed positive $(1,1)$-current $T$ of bounded potentials. Then for every constant $p  \ge 0$, we get
\begin{align}\label{ine-sobolev-def0hai}
\bigg(\frac{1}{V_{\eta}}\int_X u^{p\beta} \eta^n\bigg)^{\frac{1}{\beta}} \le C (p+1)^2 \bigg( \frac{1}{V_{\eta}} \int_X u^{p-2} T \wedge  \eta^{n-1}+ \frac{1}{V_{\eta}}\int_X u^p \eta^n \bigg).
\end{align} 
\end{lemma}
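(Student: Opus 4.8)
The plan is to derive (\ref{ine-sobolev-def0hai}) from the Sobolev inequality (\ref{ine-sobolev-def0them}) applied to the smooth K\"ahler approximants $\eta_\epsilon$ and then to let $\epsilon\to 0$. One may assume $p>0$: for $p=0$ the left-hand side of (\ref{ine-sobolev-def0hai}) equals $1$ while the last term on the right equals $C$, so the inequality is trivial once $C\ge 1$. It also suffices to treat bounded $u$; the general case follows by applying the bounded case to $u_c:=\min\{u,c\}$ (which still satisfies $u_c\ge 1$ and $d u_c\wedge\dc u_c\le d u\wedge\dc u\le T$) and letting $c\to\infty$, since the three integrals in (\ref{ine-sobolev-def0hai}) for $u_c$ converge, monotonically or by dominated convergence via $u^{p-2}\le 1+u^p$, to those for $u$. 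Here $\int_X u^p\, T\wedge\eta^{n-1}<\infty$ because locally $T\wedge\eta^{n-1}\le \big(\ddc(\phi_T+\phi_\eta)\big)^n$ for bounded psh local potentials $\phi_T$ of $T$ and $\phi_\eta$ of $\eta$, and Lemma \ref{le-chuanLpcuauWsaodia} applies.

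So assume $u$ bounded and $p>0$, and set $v:=u^{p/2}$. Since $1\le u$ is bounded we have $v\in W^{1,2}(X)$ with $|v|^2=u^p$ and $dv\wedge\dc v=\tfrac{p^2}{4}\,u^{p-2}\,d u\wedge\dc u$. Recall that $\eta_\epsilon$ lies in $\mathcal{W}(X,\omega_X,n,A',p,K',\tfrac{1}{4(1+V_{\omega_X})})$, so (\ref{ine-sobolev-def0them}) applies with this $v$ and $\omega'=\eta_\epsilon$. Wedging the inequality of $(1,1)$-currents $du\wedge\dc u\le T$ with the positive smooth form $\eta_\epsilon^{n-1}$ and multiplying by $u^{p-2}\ge 0$ gives $u^{p-2}\,du\wedge\dc u\wedge\eta_\epsilon^{n-1}\le u^{p-2}\,T\wedge\eta_\epsilon^{n-1}$ as measures, so, after absorbing $\max\{p^2/4,1\}$ into the constant, we obtain
\begin{align*}
\bigg(\frac{1}{V_{\eta_\epsilon}}\int_X u^{p\beta}\eta_\epsilon^n\bigg)^{1/\beta}\le C(p+1)^2\bigg(\frac{1}{V_{\eta_\epsilon}}\int_X u^{p-2}\,T\wedge\eta_\epsilon^{n-1}+\frac{1}{V_{\eta_\epsilon}}\int_X u^p\,\eta_\epsilon^n\bigg).
\end{align*}

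It remains to let $\epsilon\to 0$. One has $V_{\eta_\epsilon}\to V_\eta$, and by Lemma \ref{le-Twedeetaepsi} together with Lemma \ref{le-hoitulientuc} (whose hypotheses hold since $u\ge 0$, $du\wedge\dc u\le T$ with $T$ of bounded potentials, and $\eta_\epsilon=\theta+\epsilon\omega_X+\ddc u_\epsilon$ with $\|\theta+\epsilon\omega_X\|_{\mathcal{C}^2}$ and $\|u_\epsilon\|_{L^\infty}$ bounded uniformly in $\epsilon$) we get $\int_X u^{p\beta}\eta_\epsilon^n\to\int_X u^{p\beta}\eta^n$ and $\int_X u^p\eta_\epsilon^n\to\int_X u^p\eta^n$. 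For the mixed term, Lemma \ref{le-Twedeetaepsi} gives $T\wedge\eta_\epsilon^{n-1}\to T\wedge\eta^{n-1}$ weakly, and $\int_X u^{p-2}\,T\wedge\eta_\epsilon^{n-1}\to\int_X u^{p-2}\,T\wedge\eta^{n-1}$ follows exactly as in the proof of Lemma \ref{le-hoitulientucdia}: $u$ being bounded and quasi-continuous, for $\delta>0$ one finds an open set $U'$ with $\capK(U',X)$ small on which $u$ coincides with a function continuous on $X$; the integrals of this continuous approximant against $T\wedge\eta_\epsilon^{n-1}$ converge by weak convergence, while the errors on $U'$ are $\lesssim\int_{U'}T\wedge\eta_\epsilon^{n-1}+\int_{U'}T\wedge\eta^{n-1}\lesssim\capK(U',X)$ by the Chern-Levine-Nirenberg inequality, the potentials of $T$ and of the $\eta_\epsilon$ being uniformly bounded and $T\wedge\eta_\epsilon^{n-1}$ being locally dominated by a Monge-Amp\`ere measure of uniformly bounded potentials. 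Combining these limits yields (\ref{ine-sobolev-def0hai}) for bounded $u$, hence for all $u$ by the reduction above.

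I expect the main obstacle to be this last convergence for the mixed current $T\wedge\eta_\epsilon^{n-1}$: the integrand $u^{p-2}$ is only quasi-continuous and, prior to truncation, possibly unbounded, so weak convergence of measures is not by itself enough, and one must combine quasi-continuity with the capacity and Chern-Levine-Nirenberg estimates precisely as in Lemma \ref{le-hoitulientucdia}, the truncation $u_c$ taking care of the unboundedness.
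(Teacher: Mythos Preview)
Your argument is correct and follows essentially the same route as the paper: reduce to bounded $u$ by truncation, apply the Sobolev inequality (\ref{ine-sobolev-def0them}) to $v=u^{p/2}$ with respect to the smooth approximants $\eta_\epsilon$, bound the gradient term via $du\wedge\dc u\le T$, and pass to the limit using Lemma \ref{le-Twedeetaepsi} together with Lemma \ref{le-hoitulientuc}. Your treatment is in fact slightly more careful than the paper's on one point: Lemma \ref{le-hoitulientuc} as stated only covers integrals against $\eta_\epsilon^n$, not against the mixed measure $T\wedge\eta_\epsilon^{n-1}$, and you correctly spell out that the convergence of $\int_X u^{p-2}\,T\wedge\eta_\epsilon^{n-1}$ requires rerunning the quasi-continuity plus capacity argument from Lemma \ref{le-hoitulientucdia}, whereas the paper simply cites Lemma \ref{le-hoitulientuc} for all three terms.
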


\proof Approximating $u$ by $\min\{u,M\}$ for $M \to \infty$, it suffices to check the desired inequality for $u$ bounded. Since $\eta_\epsilon \in \mathcal{W}(X,\omega_X,n,A',p,K',\frac{1}{4(1+V_{\omega_X})})$ for every constant $\epsilon>0$, by (\ref{ine-sobolev-def0them}), we get 
\begin{align}\label{ine-sobolev-def0}
\bigg(\frac{1}{V_{\eta_\epsilon}}\int_X |v|^{2 \beta} \eta_\epsilon^n\bigg)^{\frac{1}{\beta}} \le C \bigg( \frac{1}{V_{\eta_\epsilon}} \int_X d v \wedge \dc v \wedge \eta_\epsilon^{n-1}+ \frac{1}{V_{\eta_\epsilon}}\int_X |v|^2 \eta_\epsilon^n \bigg),
\end{align}
for every $\epsilon \in (0,1]$ and every $v \in W^{1,2}(X)$, where $C>0$ is independent of $u,v,\omega,p$.
Since $u \ge 1$ and $u$ is bounded,   for every constant $p \ge 0$, we have $u^{p/2} \in W^{1,2}(X)$.  Replacing $v$ by $u^{p/2}$ in (\ref{ine-sobolev-def0}), we get
\begin{align}\label{ine-sobolev-def0up}
\bigg(\frac{1}{V_{\eta_\epsilon}}\int_X u^{p \beta} \eta_\epsilon^n\bigg)^{\frac{1}{\beta}} &\le C(p+1)^2 \bigg( \frac{1}{V_{\eta_\epsilon}} \int_X  u^{p-2} d u \wedge \dc u \wedge \eta_\epsilon^{n-1}+ \frac{1}{V_{\eta_\epsilon}}\int_X u^p \eta_\epsilon^n \bigg)\\
\nonumber
& \le C(p+1)^2 \bigg( \frac{1}{V_{\eta_\epsilon}} \int_X  u^{p-2} T \wedge \eta_\epsilon^{n-1}+ \frac{1}{V_{\eta_\epsilon}}\int_X u^p \eta_\epsilon^n \bigg)
\end{align}
for every $p \ge 0$,  every $\epsilon \in (0,1]$. 
 By Lemma \ref{le-Twedeetaepsi}, one gets that $\eta^n_\epsilon \to \eta^n$ weakly as $\epsilon \to 0$ and  $T\wedge \eta_\epsilon^{n-1} \to T\wedge \eta^{n-1}$ weakly as $\epsilon \to 0$. 
By this, Lemma \ref{le-hoitulientuc} and letting $\epsilon \to 0$ in (\ref{ine-sobolev-def0up}), we obtain (\ref{ine-sobolev-def0hai}) as desired. 
\endproof

We also need the following variant of Lemma \ref{le-sobolevchoetaepsilon}.

\begin{lemma}\label{le-sobolevchoetaepsilonphay}
Let $u \in W^{1,2}_*(X)$ be a positive function such that $d u \wedge \dc  u \le T$, for some closed positive $(1,1)$-current $T$ of bounded potentials. Then for every constant $p \ge 2$, we get
\begin{align*}
\bigg(\frac{1}{V_{\eta}}\int_X u^{p\beta} \eta^n\bigg)^{\frac{1}{\beta}} \le C p^2 \bigg( \frac{1}{V_{\eta}} \int_X u^{p-2} T \wedge  \eta^{n-1}+ \frac{1}{V_{\eta}}\int_X u^p \eta^n \bigg).
\end{align*} 
\end{lemma}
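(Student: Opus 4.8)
The plan is to follow the proof of Lemma~\ref{le-sobolevchoetaepsilon} essentially verbatim, with the hypothesis $u \ge 1$ used there replaced by the hypothesis $p \ge 2$. Indeed, the only purpose of the normalization $u \ge 1$ in that argument is to guarantee that $v := u^{p/2}$ belongs to $W^{1,2}(X)$ for every exponent $p \ge 0$, so that it may be inserted into the Sobolev inequality \eqref{ine-sobolev-def0}; when $u$ is bounded and $p \ge 2$ one has $p/2 - 1 \ge 0$, hence $u^{p/2 - 1}$ is bounded and $\nabla(u^{p/2}) = \tfrac{p}{2}\, u^{p/2 - 1}\,\nabla u \in L^2(X)$, so that $v \in W^{1,2}(X)$ all the same.

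First I would reduce to the case of bounded $u$ by replacing $u$ with $u_M := \min\{u, M\}$: then $u_M \in W^{1,2}_*(X)$ with $du_M \wedge \dc u_M \le du \wedge \dc u \le T$, and since $p \ge 2$ (so $p - 2 \ge 0$) the functions $u_M^{p\beta}$, $u_M^{p - 2}$, $u_M^p$ increase pointwise to $u^{p\beta}$, $u^{p-2}$, $u^p$ as $M \to \infty$; the inequality for general $u$ then follows from that for $u_M$ by monotone convergence applied to the measures $\eta^n$ and $T \wedge \eta^{n-1}$. Next, for $u$ bounded and $p \ge 2$, I would insert $v = u^{p/2}$ into the Sobolev inequality \eqref{ine-sobolev-def0}, which holds on each $\eta_\epsilon \in \mathcal{W}(X,\omega_X,n,A',p,K',\tfrac{1}{4(1+V_{\omega_X})})$ with a constant $C$ independent of $\epsilon$, $u$ and $p$. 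Using $dv \wedge \dc v = \tfrac{p^2}{4}\, u^{p-2}\, du \wedge \dc u \le \tfrac{p^2}{4}\, u^{p-2}\, T$, the fact that $u \ge 0$, and the elementary bounds $C \cdot \tfrac{p^2}{4} \le C p^2$ and $C \le C p^2$ valid for $p \ge 2$, this yields
\begin{align*}
\Bigl(\frac{1}{V_{\eta_\epsilon}}\int_X u^{p\beta}\,\eta_\epsilon^n\Bigr)^{1/\beta} \le C p^2\Bigl(\frac{1}{V_{\eta_\epsilon}}\int_X u^{p-2}\, T \wedge \eta_\epsilon^{n-1} + \frac{1}{V_{\eta_\epsilon}}\int_X u^p\,\eta_\epsilon^n\Bigr)
\end{align*}
for every $\epsilon \in (0,1]$. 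Finally I would let $\epsilon \to 0$: by Lemma~\ref{le-Twedeetaepsi} one has $\eta_\epsilon^n \to \eta^n$ and $T \wedge \eta_\epsilon^{n-1} \to T \wedge \eta^{n-1}$ weakly and $V_{\eta_\epsilon} \to V_\eta$, while $u \ge 0$, $du \wedge \dc u \le T$ with $T$ of bounded potentials, and $\eta_\epsilon = (\theta + \epsilon\omega_X) + \ddc u_\epsilon$ with $\|\theta + \epsilon\omega_X\|_{\mathcal{C}^2}$ and $\|u_\epsilon\|_{L^\infty}$ bounded uniformly in $\epsilon$; hence Lemma~\ref{le-hoitulientuc}, applied (as in the proof of Lemma~\ref{le-sobolevchoetaepsilon}) with exponents $p\beta$, $p - 2$ and $p$, forces each of the three integrals to converge to its counterpart against $\eta$, and the claimed inequality follows.

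I do not expect any serious obstacle, as this is a minor variant of Lemma~\ref{le-sobolevchoetaepsilon}; the one point that genuinely requires care is keeping $p \ge 2$ throughout, which is exactly what allows $u^{p/2}$ to be a legitimate $W^{1,2}(X)$ test function and $u^{p-2}$ a legitimate (nonnegative, and bounded once $u$ is) weight, and which is the reason the statement excludes small $p$. A secondary bookkeeping point is checking that Lemma~\ref{le-hoitulientuc} applies to the approximating family $(\eta_\epsilon)$, i.e.\ that the smooth representatives $\theta + \epsilon\omega_X$ are uniformly $\mathcal{C}^2$-bounded and the normalized potentials $u_\epsilon$ uniformly $L^\infty$-bounded for $\epsilon \in (0,1]$; both are already recorded in the running set-up. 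Everything else --- the substitution $v = u^{p/2}$, the constant chase, and the monotone-convergence reduction to bounded $u$ --- is routine.
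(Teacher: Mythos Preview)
Your proposal is correct and matches the paper's own proof, which simply says the argument ``goes exactly as in that of Lemma~\ref{le-sobolevchoetaepsilon}, the only difference is that we only have $u^{p/2}\in W^{1,2}_*(X)$ for $p\ge 2$ because we no longer assume that $u\ge 1$.'' You have spelled out precisely this modification --- the reduction to bounded $u$, the substitution $v=u^{p/2}$ (legitimate since $p/2-1\ge 0$), and the passage $\epsilon\to 0$ via Lemmas~\ref{le-Twedeetaepsi} and~\ref{le-hoitulientuc} --- so there is nothing to add.
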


\proof The proof goes exactly as in that of Lemma \ref{le-sobolevchoetaepsilon}, the only difference is that we only have $u^{p/2} \in W^{1,2}_*(X)$ for $p \ge 2$ because we no longer assume that $u \ge 1$. 
\endproof

Let $U_\omega$ be an open dense Zariski subset in $X$ such that $\omega$ is smooth on $U_\omega$.  Let $d_\omega$ be the distance induced by $\omega$ on $U_\omega$.

\begin{lemma} \label{le-L2boundXbinh}  There exist constants $c_0>0,C_1>0$ independent of $\omega$ such that 
$$\int_{U_\omega} e^{c_0 d_\omega(x_0,\cdot)} \omega_X^n \le C_1$$
for some point $x_0 \in U_\omega$ (depending on $\omega$). 
\end{lemma}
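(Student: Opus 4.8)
The plan is to apply the energy estimates of the previous section, together with the Sobolev inequality for $\eta$ (Lemma \ref{le-sobolevchoetaepsilon}), to the exponential function $u = e^{c_0 d_\omega(x_0,\cdot)}$ for a suitable small $c_0>0$, and then compare with $\omega_X^n$ via the comparison $\omega^n \le 2c^{-1}\eta^n$ from \eqref{ine-sosanhomegaeta}. Concretely, fix $x_0 \in U_\omega$ and write $\varphi := d_\omega(x_0,\cdot)$, a $1$-Lipschitz function for $d_\omega$ on $U_\omega$, so that $d\varphi \wedge \dc\varphi \le \omega$ on $U_\omega$ in the sense of currents; since $\omega$ has bounded potentials, $\varphi \in W^{1,2}_*(X)$ (after checking it extends across the pluripolar complement of $U_\omega$, using that $\varphi$ is $\omega$-Lipschitz and $\omega$ is locally dominated by a Monge--Amp\`ere mass of bounded potentials). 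First I would set $u := e^{c_0\varphi} \ge 1$, note $du \wedge \dc u = c_0^2 u^2 \, d\varphi \wedge \dc\varphi \le c_0^2 u^2 \omega \le T := \eta + \ddc\tilde\psi$ if $c_0$ is chosen small — more precisely one works with $v := u$ and absorbs the factor $u^2$ by instead estimating $v := e^{c_0\varphi}$ through the elementary pointwise inequality $d(e^{c_0\varphi})\wedge\dc(e^{c_0\varphi}) = c_0^2 e^{2c_0\varphi} d\varphi\wedge\dc\varphi$, which forces one to run the Moser/De Giorgi iteration of Lemma \ref{le-Moseriteration} on powers of $e^{c_0\varphi}$ rather than a naive single application.

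The key steps, in order: (1) establish $\varphi \in W^{1,2}_*(X)$ with $d\varphi\wedge\dc\varphi \le \omega = \eta + \ddc\tilde\psi$ and $0 \le \tilde\psi \le M$ from \eqref{ine-hieucuaomegavaeta}; (2) for each $p \ge 2n$ apply Proposition \ref{pro-sobo-etatoTsuadi} with $u$ replaced by $e^{(c_0/2)\varphi}$ (so that $du\wedge\dc u \le (c_0^2/4)\,T$, absorbing the constant by rescaling $T$) to get $\int_X e^{(c_0 p/2)\varphi}\,\eta^{k}\wedge\eta^{n-k}$-type bounds reduced to $\int_X (u^p + u^{p-2n})\,\eta^n$; (3) feed the Sobolev inequality of Lemma \ref{le-sobolevchoetaepsilonphay} into Moser iteration (Lemma \ref{le-Moseriteration}) with the probability measure $V_\eta^{-1}\eta^n$: the energy estimate lets one bound $\frac{1}{V_\eta}\int_X u^{p-2}\,T\wedge\eta^{n-1}$ by $\frac{c_0^2}{V_\eta}\int_X(u^{p}+u^{p-2n})\,\eta^n \lesssim c_0^2 \|u\|_{L^p(\eta)}^p$ once $c_0 \le 1$, which gives exactly the hypothesis $\|u\|_{L^{p\beta}(\mu)}^p \le C p^C \|u\|_{L^p(\mu)}^p$ of Lemma \ref{le-Moseriteration} with $\mu = V_\eta^{-1}\eta^n$; (4) the base case $\|u\|_{L^\lambda(\eta)} \le C$ for some fixed $\lambda$ follows by choosing $c_0$ small enough that $\int_X e^{c_0\lambda\varphi}\,\eta^n/V_\eta$ is controlled — this is where one actually needs $c_0$ genuinely small, using an iteration on $\int_X e^{s\varphi}\eta^n$ in $s$ via the energy inequality to close a Gr\"onwall-type loop, or alternatively bootstrapping from an a priori diameter-type bound in $L^1$; (5) conclude $\|e^{c_0\varphi}\|_{L^\infty(V_\eta^{-1}\eta^n)} \le M'$, hence $\int_{U_\omega} e^{c_0\varphi}\,\eta^n \le M' V_\eta \le C$ (using $\{\eta\}\cdot\{\omega_X\}^{n-1} \le A'$ to bound $V_\eta$); (6) finally pass from $\eta^n$ to $\omega_X^n$: from \eqref{ine-sosanhomegaeta} we have $\omega^n \le 2c^{-1}\eta^n$, but we want $\omega_X^n$, so instead use $V_\omega^{-1}\eta^n = c f_2\,\omega_X^n$ with $f_2 \ge 1/4$, giving $\omega_X^n \le 4 c^{-1} V_\omega^{-1}\eta^n$, whence $\int_{U_\omega} e^{c_0\varphi}\,\omega_X^n \le 4c^{-1}V_\omega^{-1}\int_{U_\omega}e^{c_0\varphi}\,\eta^n \le C_1$ after absorbing $V_\omega$ (which is itself bounded above and below by the cohomological constraints and $p_0 > n$ via Jensen on $\mathcal{N}_{X,\omega_X,p_0}$).

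The main obstacle I expect is step (4), the base-case $L^\lambda$ bound with a \emph{uniform} small $c_0$: naively one only knows $\varphi \in L^1$ with a bound depending on $\omega$, so one must instead derive the exponential integrability bound self-referentially. I would handle this by proving, via Proposition \ref{pro-sobo-etatoT} applied to $u = e^{s\varphi}$, a differential inequality of the form $\frac{d}{ds}\big(\log \int_X e^{2s\varphi}\eta^n\big) \le C$ for $s$ in a fixed small interval $[0,c_0]$ — the point being that $\int_X e^{2s\varphi}\,d\varphi\wedge\dc\varphi\wedge\eta^{n-1} \le \int_X e^{2s\varphi}\,T\wedge\eta^{n-1} \le 2M(p\!+\!1)^2\int_X e^{2s\varphi}\eta^n$ controls the $s$-derivative — and then integrating from $s=0$ (where the integral is $V_\eta \le C$). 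A subtler technical point is justifying $\varphi \in W^{1,2}_*(X)$ globally and that $e^{c_0\varphi} \in W^{1,2}_*(X)$ despite $\varphi$ being defined only on $U_\omega$ a priori; here one uses that $\varphi$ is bounded (the diameter of $(\widehat X,\omega)$ is finite, which is either assumed as the input of Theorem \ref{the-main-univolume} or established prior in the argument), so $e^{c_0\varphi}$ is bounded and one only needs the current bound $d(e^{c_0\varphi})\wedge\dc(e^{c_0\varphi}) \le c_0^2\|e^{c_0\varphi}\|_\infty^2\,\omega$, a closed positive current of bounded potentials extending across $X\setminus U_\omega$.
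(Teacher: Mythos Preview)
Your approach has a genuine gap, and it sits exactly where you suspected: the base case. First, the claim in step (2) that $du \wedge \dc u \le (c_0^2/4)\,T$ for $u = e^{(c_0/2)\varphi}$ is simply false: one has $du \wedge \dc u = (c_0^2/4)\,u^2\, d\varphi\wedge\dc\varphi \le (c_0^2/4)\,u^2\,\omega$, and the factor $u^2$ cannot be absorbed into a \emph{closed} positive current as required by Propositions~\ref{pro-sobo-etatoT}--\ref{pro-sobo-etatoTsuadi} unless you already know $u$ is bounded. Second, the Gr\"onwall idea in step (4) does not close: $\frac{d}{ds}\int_X e^{2s\varphi}\eta^n = 2\int_X \varphi\, e^{2s\varphi}\eta^n$, which is \emph{not} the energy integral $\int_X e^{2s\varphi}\,d\varphi\wedge\dc\varphi\wedge\eta^{n-1}$; there is no integration by parts that converts one into the other. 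Finally, your fallback (``use that $\varphi$ is bounded'') is circular: in the paper's logic, Lemma~\ref{le-L2boundXbinh} feeds into Lemma~\ref{le-L2boundXL1log}, which provides the $L^1(\eta^n)$ base case for the Moser iteration that \emph{then} yields the diameter bound in Theorem~\ref{the-main-univolume}. What you have outlined is essentially that later Moser iteration, run on $e^{c_0\varphi}$ instead of $1+\varphi$, but with the starting input removed.

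The paper's proof of Lemma~\ref{le-L2boundXbinh} is entirely different and does not use the Sobolev inequality, the auxiliary metric $\eta$, or the energy estimates at all. It first establishes a \emph{uniform} bound on $\|d_\omega(\cdot,\cdot)\|_{L^2(U\times U,\,\omega_X^{2n})}$ on local charts by the segment-integration argument going back to \cite{DPS,Tosatti-noncollapse,Rong-Zhang} (the only subtlety being that segments meeting $X\setminus U_\omega$ form a null set, so one can still integrate along them). A geometric doubling/covering argument then propagates this local $L^2$ bound to a uniform bound on $\int_X d_\omega(x_0,x)^2\,\omega_X^n$ for a suitably chosen $x_0$; this step uses only the cohomological constraint $\{\omega\}\cdot\{\omega_X\}^{n-1}\le A$. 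Since $d\varphi\wedge\dc\varphi \le \omega$ extends across the pluripolar set $X\setminus U_\omega$, one gets $\varphi \in W^{1,2}_*(X)$ with uniformly bounded $*$-norm, and the exponential integrability then follows from the Moser--Trudinger inequality for $W^{1,2}_*$ in \cite{DinhMarinescuVu,Vigny}. The missing idea in your plan is precisely this direct $L^2$ estimate, which is the only known way to get off the ground without any a priori diameter information.
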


As one can see from the proof below, similar inequalities still hold if $d_\omega(x_0,\cdot)$ is replaced by $(d_\omega(x_0,\cdot))^2$. Lemma \ref{le-L2boundXbinh} is, however, sufficient for us.  We refer to \cite{YangLi} for partial results. 

\proof 
Let $v(x_1,x_2):= d_{\omega}(x_1,x_2)$ for $x_1,x_2 \in U_\omega$.
Recall that for $x_1$ fixed, $v(x_1,\cdot)$ is Lipschitz continuous with respect to the distance $d_\omega$ on $U_\omega$. Hence we get
\begin{align}\label{ine-ddcvT0}
\partial_{x_2} v(x_1,\cdot) \wedge \bar{\partial}_{x_2}v(x_1,\cdot) \le \omega
\end{align}
as currents on $U_\omega$. 
We consider local charts $U\Subset U'$ of $X$ biholomorphic to $\B_1,\B_2$ in $\C^n$ respectively, where $\B_r$ denotes the ball centered at $0$ of radius $r$ in $\C^n$. 
\\

\textbf{Claim.}  $\|v\|_{L^2(U^2)}$ is bounded by a constant independent of $\omega$.\\

This claim follows more or less from  the proof of \cite[Lemma 3.1]{Tosatti-noncollapse}, which is in turn based on \cite[Lemma 1.3]{DPS} (see also  \cite[Lemma 2.2]{Rong-Zhang} and \cite[Lemma 4.2]{GGZ-logcontiu}). 
Indeed, the only issue is that $\omega$ is not smooth on $X$  but only on $U_\omega$. Thus when calculating the distance between points in $U_\omega$, only curves lying entirely in $U_\omega$ are considered. This is overcome by observing that the set 
$$\{(x,y) \in U^2: [x,y] \cap (X \backslash U_\omega) \not = \varnothing\}$$
is of zero Lebesgue measure on $U^2$ (one can see it by using Fubini's theorem and the property that the Hausdorff dimension of $X \backslash U_\omega$ is at most $2n-2$), where $[x,y]$ denotes the segment joining $x,y$ (in $\C^n$); see \cite[Lemma 4.2]{GGZ-logcontiu}. The rest of the proof for the claim runs as in  the proof of \cite[Lemma 2.2]{Rong-Zhang}.

By the above claim, we obtain that there exist  constants $r_X>0, C_X>0$ (independent of $\omega$) such that 
\begin{align}\label{ine-BXx0}
\int_{x_0 \in X} \omega_X^n \int_{x \in B_X(x_0,r_X)} v^2(x_0,x) \omega_X^{n} \le C_X,
\end{align}
where $B_X(x_0,r)$ denotes the ball of radius $r$ centered at $x_0$ with respect to the distance $d_{\omega_X}$ on $X$. Let $c_n>0$ be a constant (depending on $\omega_X$) such that 
\begin{align}\label{ine-chonbankinhbe}
r^{-2n}\int_{B_X(x,r)}\omega_X^n \ge c_n   
\end{align}
for every $x \in X$ and $r \in [0, diam(X, \omega_X)]$. Let 
$$B_1:= C_X  8^{2n+2} r_X^{-2n} c^{-1}_n.$$
By (\ref{ine-BXx0}), there exists a Borel subset $E_1$ of $X$ such that
\begin{align}\label{ine-BXx02}
\int_{x \in B_X(x_0,r_X)} v^2(x_0,x) \omega_X^{n} \le B_1, \quad \int_{E_1} \omega_X^n \le C_X B_1^{-1},
\end{align}
for every $x_0 \in X \backslash E_1$ (one observes that $E_1$ depends on $\omega$). Fix now a point $x_0 \in X \backslash E_1$. Let 
$$B_2:=B_1 8^{2n+2} r_X^{-2n}c_n^{-1}.$$
 We deduce from (\ref{ine-BXx02}) that there exists a subset $E_{2,x_0}$ on $X$ such that  
\begin{align}\label{ine-BXx03}
v^2(x_0,x)\le B_2, \quad \int_{E_{2,x_0}} \omega_X^n \le B_1 B_2^{-1},
\end{align}
for every $x \in B_X(x_0,r_X) \backslash E_{2,x_0}$. We now show that 
$$\int_{B_X(x_0, 5r_X/4)} v^2(x_0,x) \omega_X^n$$
 is bounded uniformly in $\omega$ (and in $x_0$). Covering $B_X(x_0,5r_X/4)$ by balls of radius $r_X/4$, we see that it suffices to bound from above $$\int_{x \in B_X(x_1, r_X/4)} v^2(x_0,x) \omega_X^n$$
  uniformly in $\omega,x_0,x_1$ for $x_1$ with $d_{\omega_X}(x_0,x_1) =r_X$. Let $\gamma$ be a minimal geodesic curve joining $x_0,x_1$ (with respect to the smooth metric $\omega_X$). Let $x_1'$ be a point in $\gamma$ so that $d_{\omega_X}(x_0,x'_1)= 7 r_X/8$. Hence $d_{\omega_X}(x'_1,x_1)=r_X/8$.  

By (\ref{ine-chonbankinhbe}) and the choice of $B_1,B_2$,  we infer
$$\int_{B_X(x'_1,r_X/8)} \omega_X^n \ge r_X^{2n} c_n/8^{2n} \ge 2(C_X B_1^{-1}+ B_1 B_2^{-1}).$$
It follows that there exists $x''_1 \in B_X(x'_1,r_X/8)$ such that $x''_1 \not  \in E_1 \cup E_{2,x_0}$. Observe that 
$$d_{\omega_X}(x''_1,x) \le d_{\omega_X}(x''_1,x'_1)+ d_{\omega_X}(x'_1,x_1)+ d_{\omega_X}(x_1, x) \le r_X/8+ r_X/8+ r_X/4= r_X/2,$$
for every $x \in B_X(x_1, r_X/4)$. Consequently, we obtain 
$$\int_{B_X(x_1,r_X/4)} v^2(x''_1,x) \omega_X^n \le \int_{B_X(x''_1,r_X)} v^2(x''_1,x) \omega_X^n \le B_1$$
by (\ref{ine-BXx02}) and the fact that $x''_1 \not \in E_1$. Furthermore since $x''_1 \not \in E_{2, x_0}$, by (\ref{ine-BXx03}), we get $v^2(x_0,x''_1) \le B_2$. Hence we obtain
\begin{align*}
\int_{x \in B_X(x_1, r_X/4)} v^2(x_0,x) \omega_X^n &\le 2 \int_{x \in B_X(x_1, r_X/4)} v^2(x''_1,x) \omega_X^n+ 2 \int_{x \in B_X(x_1, r_X/4)} v^2(x_0,x''_1) \omega_X^n\\
&\le 2(B_1+B_2 V_{\omega_X}).   
\end{align*}
It follows that $$\int_{B_X(x_0, 5r_X/4)} v^2(x_0,x) \omega_X^n$$
 is bounded uniformly in $\omega$ (and in $x_0$) as desired.  Repeating this procedure for $5r_X/4$ instead of $r_X$, we get that for every $m \in \N$,  the quantity $$\int_{B_X(x_0, 5^mr_X/4^m)} v^2(x_0,x) \omega_X^n$$
 is bounded uniformly in $\omega$ and $x_0$. Since $diam(X, \omega_X)$ is finite, for $m$ sufficiently large, the ball $B_X(x_0, 5^mr_X/4^m)$ is equal to the whole $X$. In other words, we have showed that 
 $$\int_{X} v^2(x_0,x) \omega_X^n$$
  is bounded uniformly in $x_0, \omega$  (readers also note that we used only one condition on $\omega$ that the cohomology class $\{\omega\}$ lies in a fixed compact subset in $H^{1,1}(X,\R)$).  
Hence $v(x_0,\cdot)$ is of uniformly $L^2$-norm on $X$. This combined with  \cite[Proposition 3.1]{DS_decay} implies that  $v(x_0,\cdot) \in W^{1,2}_*(X)$ and is of uniform $*$-norm. The desired inequality now follows from \cite[Section 2.2]{Vigny}  or the Moser-Trudinger inequality proved in \cite[Theorem 1.2]{DinhMarinescuVu}.
\endproof

\begin{lemma} \label{le-L2boundXL1log} Let $x_0 \in U_\omega$ be the point in Lemma \ref{le-L2boundXbinh}. Then, there exists a constant $C>0$ independent of $\omega$ such that 
$$V_{\omega'}^{-1} \int_X d_\omega(x_0,\cdot) \omega'^n \le C,$$
for every $\omega' \in \tilde{\mathcal{V}}(X,\omega_X,n,A',p,K')$.
\end{lemma}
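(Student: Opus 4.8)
The plan is to reduce the estimate to an elementary Young-type (entropy) inequality that marries the exponential integrability of Lemma~\ref{le-L2boundXbinh} with the $L^1\log L^p$ control on the density of $\omega'^n$. Write $V_{\omega'}^{-1}\omega'^n = f'\omega_X^n$, where $f'\ge 0$, $\int_X f'\,\omega_X^n = 1$ and $\int_X f'|\log f'|^p\,\omega_X^n \le K'$. Since $\omega'^n$ is absolutely continuous with respect to $\omega_X^n$ while $X\backslash U_\omega$ is contained in a proper analytic subset of $X$, hence $\omega_X^n$-negligible, the integral $\int_X d_\omega(x_0,\cdot)\,\omega'^n$ is well-defined (the integrand, a priori defined only on $U_\omega$, is continuous and finite there because $(U_\omega,\omega)$ is connected) and equals $V_{\omega'}\int_{U_\omega} d_\omega(x_0,\cdot)\,f'\,\omega_X^n$. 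It therefore suffices to bound $\int_{U_\omega} d_\omega(x_0,\cdot)\,f'\,\omega_X^n$ by a constant depending only on $X,\omega_X,A',p,K'$.

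For this I would invoke the pointwise inequality $st \le e^s + t\log t - t$, valid for every $s\in\R$ and $t\ge 0$, which expresses the Legendre duality between $s\mapsto e^s$ and $t\mapsto t\log t - t$. Applying it with $s = c_0\,d_\omega(x_0,\cdot)$ and $t = f'$, where $c_0$ is the constant from Lemma~\ref{le-L2boundXbinh}, and integrating against $\omega_X^n$ gives
\[
 c_0\int_{U_\omega} d_\omega(x_0,\cdot)\,f'\,\omega_X^n \;\le\; \int_{U_\omega} e^{c_0 d_\omega(x_0,\cdot)}\,\omega_X^n \;+\; \int_X f'\log f'\,\omega_X^n \;-\; \int_X f'\,\omega_X^n .
\]
The first term on the right is at most $C_1$ by Lemma~\ref{le-L2boundXbinh}, and the last term equals $1$. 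For the middle term, which is where the hypothesis $p\ge 1$ is used, splitting $X$ into $\{|\log f'|\ge 1\}$ and $\{|\log f'|<1\}$ and using $|\log f'|\le|\log f'|^p$ on the first set and $f'|\log f'|\le f'$ on the second yields
\[
 \int_X f'\log f'\,\omega_X^n \;\le\; \int_X f'|\log f'|\,\omega_X^n \;\le\; \int_X f'|\log f'|^p\,\omega_X^n + \int_X f'\,\omega_X^n \;\le\; K'+1 .
\]
Combining, $c_0\int_{U_\omega} d_\omega(x_0,\cdot)\,f'\,\omega_X^n \le C_1 + K'$, so $V_{\omega'}^{-1}\int_X d_\omega(x_0,\cdot)\,\omega'^n \le (C_1+K')/c_0$, which is the desired uniform bound.

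Finiteness of every integral is automatic: $t\log t - t \ge -1$ for all $t\ge 0$, so the negative part of $f'\log f' - f'$ is dominated by the constant $1$ on the compact manifold $X$, while its positive part is controlled as above, and $\int_{U_\omega} e^{c_0 d_\omega(x_0,\cdot)}\,\omega_X^n<\infty$ by Lemma~\ref{le-L2boundXbinh}. Granting that lemma, there is no real obstacle in the present statement; the only points demanding a little attention are the measure-theoretic one (that $\omega'^n$ charges neither $X\backslash U_\omega$ nor pluripolar sets, so $d_\omega(x_0,\cdot)$ can legitimately be integrated against it) and the bookkeeping that guarantees the final constant is independent of the pair $(\omega,\omega')$. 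All the analytic weight of this circle of ideas sits in Lemma~\ref{le-L2boundXbinh}, and it is precisely the exponential integrability there that powers the Young-type argument above.
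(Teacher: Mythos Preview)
Your proof is correct and follows essentially the same approach as the paper: both apply Young's inequality (Legendre duality between $e^s$ and $t\log t - t$) to combine the exponential integrability of $d_\omega(x_0,\cdot)$ from Lemma~\ref{le-L2boundXbinh} with the $L^1\log L^p$ bound on the density $f'$. Your version is in fact more explicit than the paper's, which simply invokes Young's inequality and states the resulting bound.
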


\proof We have  $V_{\omega'}^{-1}\omega'^n= f' \omega_X^n$ for $f' \in L^1 \log L^p$ with $\|f'\|_{L^1 \log L^p} \le K$. Let $h:= d_\omega(x_0, \cdot)$. Using Young's inequality (see \cite[Page 111]{Hardy-Littlewood-Polya}), we obtain 
$$\int_X  h f' \omega_X^n \le \int_X e^{c_0 h} \omega_X^n + c_1\|f'\|_{L^1 \log L^p}$$ 
which is bounded by a constant independent of $\omega'$, where $c_1>0$ is a constant depending only on $c_0$ and $\omega_X$. This finishes the proof.
\endproof

\begin{proof}[End of the Proof of Theorem \ref{the-main-univolume}]

Let $x_0 \in U_\omega$ be the point in Lemma \ref{le-L2boundXbinh}.
Put $v:= 1+ d_\omega(x_0, \cdot)$, we have that  $\|v\|_{L^2(\omega_X^n)}$ is bounded uniformly in $\omega$ (one should notice that $v$ is \`a priori not continuous on $X$ and we do not know at this point of the proof that $v$ is bounded). Recall also that $v \in W^{1,2}(X)$ (see \cite{GGZ-logcontiu} or \cite{DS_decay}) and  $dv \wedge \dc v \le \omega$ as currents on $X$. By Lemma \ref{le-L2boundXbinh}, we have that $\|v\|_* \le C_2$ for some constant $C_2$ independent of $\omega$.  Applying (\ref{ine-sobolev-def0hai}) to $v$ shows that
\begin{align}\label{ine-sobolev-def0hai3}
\bigg(\frac{1}{V_{\eta}}\int_X v^{p\beta} \eta^n\bigg)^{\frac{1}{\beta}} \le C (p+1)^2 \bigg( \frac{1}{V_{\eta}} \int_X v^{p-2} \omega \wedge  \eta^{n-1}+ \frac{1}{V_{\eta}}\int_X v^p \eta^n \bigg).
\end{align}
Since $\omega,\eta$ are of bounded potentials, using (\ref{ine-hieucuaomegavaeta}), we can apply Proposition \ref{pro-sobo-etatoT} to the first integral in the right-hand side of (\ref{ine-sobolev-def0hai3}). Thus we obtain
\begin{align}\label{ine-sobolev-def0hai4}
\int_X v^{p-2} \omega \wedge  \eta^{n-1}  &\le C (p+1)^{2(n-1)} \int_X v^{p-2}\omega^n\\
\nonumber
& \le C_1 (p+1)^{2(n-1)} \int_X v^{p-2} \eta^n \le C_1 (p+1)^{2(n-1)} \int_X v^p \eta^n
\end{align}
because of (\ref{ine-sosanhomegaeta}) and the fact that $v \ge 1$. Combining (\ref{ine-sobolev-def0hai4}) with (\ref{ine-sobolev-def0hai3}), we obtain
\begin{align}\label{ine-sobolev-def0hai5}
\bigg(\frac{1}{V_{\eta}}\int_X v^{p\beta} \eta^n\bigg)^{\frac{1}{\beta}} \le C_2 (p+1)^{2n} \frac{1}{V_{\eta}}\int_X v^p \eta^n.
\end{align}
This coupled with Lemma \ref{le-Moseriteration} (for $\lambda=1$) and  Lemma \ref{le-L2boundXL1log} gives 
$$\|v\|_{L^\infty(\eta^n)} \le C_3.$$
 Since a set is of zero $\eta^n$-measure is also of zero $\omega_X^n$-measure, we see that $\|v\|_{L^\infty(\eta^n)}= \|v\|_{L^\infty(\omega_X^n)}$.   Hence the diameter of $\omega$ is bounded uniformly because $v$ is continuous on an open Zariski dense subset on $X$.

It remains to check the local non-collapsing estimate for volumes.
Let $r \in (0,1]$ be a constant. Let $\rho: \R \to \R_{\ge 0}$ be a smooth function such that $\rho= 1$ on $[-1,1]$, $\rho=0$ outside $[-2,2]$ and $0 \le \rho \le 1$. Recall that $U_\omega$ is an open dense Zariski subset in $X$ such that $\omega$ is smooth on $U_\omega$. 
Fix $x \in U_\omega$. We put $u:= d_\omega(x,\cdot)$ and  $u_r:= \rho(d_\omega(x,\cdot)/r)$. Observe that  $0 \le u_r \le 1$ and 
\begin{align}\label{ine-dur}
d u_r \wedge \dc u_r \le B r^{-2} du \wedge \dc u \le B r^{-2} \omega
\end{align}
 for some constant $B\ge 1$ depending only on $\rho$. Let $p \ge 2n$ be a constant. Using (\ref{ine-dur}), (\ref{ine-hieucuaomegavaeta}) and Proposition \ref{pro-sobo-etatoTsuadi} (applied to $u_r$, $T:= B r^{-2} \omega$ and $B r^{-2}\eta$), we see that 
\begin{align}\label{ine-sobolev-def0hai4noncollap}
\int_X u_r^{p-2} \omega \wedge  \eta^{n-1}  &\le C B^{n-1} r^{-2(n-1)} p^{2(n-1)} \int_X u_r^{p-2n} \omega^n  
\end{align}
where we note that $0 \le u_r \le 1$.  Applying Lemma \ref{le-sobolevchoetaepsilonphay} to $u_r$ and using (\ref{ine-sobolev-def0hai4noncollap}), we obtain 
\begin{align}\label{ine-sobolev-def0hai3noncollap}
\bigg(\frac{1}{V_{\eta}}\int_X u_r^{p\beta} \eta^n\bigg)^{\frac{1}{\beta}} \le C p^{2n} r^{-2n}  \frac{1}{V_{\eta}} \int_X u_r^{p-2n} \eta^n,
\end{align} 
 where we use again $0 \le u_r \le 1$. Choosing $p=4n+1$ gives
\begin{align*}  
\bigg(\frac{1}{V_{\eta}}\int_X u_r^{(4n+1)\beta} \eta^n\bigg)^{\frac{1}{\beta}} \le C p^{2n} r^{-2n} \frac{1}{V_{\eta}} \int_X u^{2n+1}_r \eta^n, 
 \end{align*}
This together with (\ref{ine-sosanhomegaeta}) implies
 \begin{align}\label{ine-sobolev-def0hai3noncollap20}
\bigg(\frac{1}{V_{\eta}}\int_X u_r^{(4n+1)\beta} \omega^n\bigg)^{\frac{1}{\beta}} \le C p^{2n} r^{-2n} \frac{1}{V_{\eta}} \int_X u^{2n+1}_r \eta^n, 
 \end{align}
Applying  Proposition \ref{pro-sobo-etatoTsuadi} (applied to $u_r$, $T:= B r^{-2} \omega$) again to the right-hand side (\ref{ine-sobolev-def0hai3noncollap20}), we get 
\begin{align}\label{ine-sobolev-def0hai3noncollap2}
\bigg(\frac{1}{V_{\eta}}\int_X u_r^{(4n+1)\beta} \omega^n\bigg)^{\frac{1}{\beta}} \le C p^{4n} r^{-4n} \frac{1}{V_{\eta}} \int_X u_r \omega^n.
 \end{align}
By this and the definition of $u_r$, we infer that  
 $$\bigg(\frac{vol_\omega(B_\omega(x,r/2))}{V_\omega}\bigg)^{1/\beta} \le C\frac{vol_\omega(B_\omega(x,r))}{V_\omega r^{4n}}$$ 
Hence 
$$\bigg(\frac{vol_\omega(B_\omega(x,r/2))}{V_\omega (r/2)^a} \bigg)^{1/\beta} \le C \frac{vol_\omega(B_\omega(x,r))}{V_\omega r^a},$$
where $a:= 4n\beta/(\beta-1)>0$. 
Applying successively this estimate for $2^{-m} r$ with $m \in \N$, and arguing as in the proof of  \cite[Proposition 9.1]{Guo-Phong-Song-Sturm2}, we obtain 
$$\frac{vol_\omega(B_\omega(x,r))}{V_\omega r^{a}} \ge c_0$$
as desired, for $x \in U_\omega$. Since $U_\omega$ is dense in $\widehat X$, the desired estimate for $x \in \widehat X \backslash U_\omega$ follows by considering a sequence in $U_\omega$ converging to $x$. 
 This finishes the proof.
\end{proof}

As commented in the introduction, we will use the same strategy to prove Theorem \ref{the-volumelocal}.

\begin{proof}[Proof of Theorem \ref{the-volumelocal}] Let $\omega \in \mathcal{M}(X,A_1,x_0, R_0,p_0,K)$ and $V_\omega^{-1}\omega^n= f \omega_X^n$, where $f$ is smooth. Let $0 \le \chi \le 1$ be a cut-off function compactly supported on $B_\omega(x_0,2R_0/3)$ with $\chi =1$ on $B_\omega(x_0, R_0/2)$. Put $f_1:= \chi f \ge 0$. Then  $f_1$ is a smooth function compactly supported on $B_\omega(x_0, R_0)$ such that $f_1=f$ on $B_\omega(x_0, R_0/2)$  and $\int_{B_\omega(x_0, R_0)} f_1 |\log f_1|^{p_0} \omega_X^n$ is uniformly bounded (independent of $x_0,R_0$). Let $\tilde{\omega}$ be the smooth K\"ahler metric cohomologous to $\omega$ such that 
$$V_\omega^{-1}\tilde{\omega}^n= f_1 \omega_X^n+ V_{\omega_X}^{-1}\bigg(1- \int_X f_1  \omega_X^n\bigg) \omega_X^n.$$ 
As above, we can write $\tilde{\omega}- \omega =\ddc \psi$ for some quasi-psh function $\psi$ of $L^\infty$-norm bounded uniformly.  Note that 
\begin{align}\label{ine-sol}
\tilde{\omega}^n \ge \omega^n
\end{align}
 on $B_\omega(x_0,R_0/2)$. 

Let $\rho: \R \to \R_{\ge 0}$ be a smooth function such that $\rho= 1$ on $[-1/2,1/2]$, $\rho=0$ outside $[-2/3,2/3]$ and $0 \le \rho \le 1$. Let $u:=d_\omega(x_0, \cdot)$  and $u_r:=\rho(u/r)$ for $r \le R_0/2$. Note that $u_r$ is supported on $B_\omega(x_0,2R_0/3)$.  We have 
$$d u_r \wedge \dc u_r \le B r^{-2}\omega.$$
Let $p \ge 2n$ be a constant. Using (\ref{ine-sol}) and Proposition \ref{pro-sobo-etatoTsuadi}, we see that 
\begin{align}\label{ine-sobolev-def0hai4noncollapl}
\int_X u_r^{p-2} \omega \wedge  \tilde{\omega}^{n-1}  &\le C B^{n-1} r^{-2(n-1)} p^{2(n-1)} \int_X u_r^{p-2n} \omega^n,
\end{align}
where we note that $0 \le u_r \le 1$.  Applying Lemma \ref{le-sobolevchoetaepsilonphay} to $u_r$ and using (\ref{ine-sobolev-def0hai4noncollapl}), we obtain 
\begin{align}\label{ine-sobolev-def0hai3noncollapl}
\bigg(\frac{1}{V_\omega}\int_X u_r^{p\beta} \tilde{\omega}^n\bigg)^{\frac{1}{\beta}} \le C p^{2n} r^{-2n}  \frac{1}{V_\omega} \int_X u_r^{p-2n} \tilde{\omega}^n,
\end{align} 
 where we use $0 \le u_r \le 1$. Choosing $p=4n+1$ gives
\begin{align*}  
\bigg(\frac{1}{V_\omega}\int_X u_r^{(4n+1)\beta} \tilde{\omega}^n\bigg)^{\frac{1}{\beta}} \le C p^{2n} r^{-2n} \frac{1}{V_\omega} \int_X u^{2n+1}_r \tilde{\omega}^n, 
 \end{align*}
 This combined with (\ref{ine-sol}) gives
 \begin{align}\label{ine-sobolev-def0hai3noncollap20l}
\bigg(\frac{1}{V_\omega}\int_X u_r^{(4n+1)\beta} \omega^n\bigg)^{\frac{1}{\beta}} \le C p^{2n} r^{-2n} \frac{1}{V_{\eta}} \int_X u^{2n+1}_r \tilde{\omega}^n, 
 \end{align}
Applying  Proposition \ref{pro-sobo-etatoTsuadi}  again to the right-hand side (\ref{ine-sobolev-def0hai3noncollap20}), we get 
\begin{align}\label{ine-sobolev-def0hai3noncollap2l}
\bigg(\frac{1}{V_{\eta}}\int_X u_r^{(4n+1)\beta} \omega^n\bigg)^{\frac{1}{\beta}} \le C p^{4n} r^{-4n} \frac{1}{V_{\eta}} \int_X u_r \omega^n.
 \end{align}
At this point we argue as in the end of the proof of Theorem \ref{the-main-univolume} to obtain the desired estimate.
\end{proof}

\begin{proof}[Proof of Corollary \ref{cor-volumelocal}]
Let $C_1,q$ be the constants in Theorem \ref{the-volumelocal}. Recall that they are independent of $x_0,R_0$. Without loss of generality we can assume that $k:=C_1^{-1}$ is an integer.  We show that $\dist_\omega(x, \partial U) \le 6 (k+1)$. Suppose otherwise that $\dist(x, \partial U) > 6(k+1)$. Let $x_j \in B_\omega(x_0,3j+1) \backslash B_\omega(x_0, 3j)$ for $j=1, \ldots, k$. Thus $B_\omega(x_j,2) \subset B_\omega(x_j,3(j+1)) \subset V$ and $B_\omega(x_j,1) \cap B_\omega(x_0,3j-1)=\varnothing$. Hence the balls $B_\omega(x_j,1)$ for $j=0,1, \ldots,k$ are disjoint and contained in $U$. By Theorem \ref{the-volumelocal}, the volume of $B_\omega(x_j,1)$ is at least $V_\omega/k$. It follows that
$$V_\omega \ge vol_\omega(U) \sum_{j=0}^k vol_\omega(B_\omega(x_j,1)) \ge V_\omega (k+1)/k.$$
This is a contradiction. Hence the desired assertion follows. 
\end{proof}

We present now the proof of Corollary \ref{cor-singularsetting}.

\begin{proof}[Proof of Corollary \ref{cor-singularsetting}] Let $\pi: Y \to X'$ be a desingularization of $X'$. Let $\theta_Y$ be a K\"ahler form on $Y$. Write $\pi^* \omega^n_{X'} = g \theta_Y^n$. Then $g$ is a nonnegative smooth function. In particular $M:= \|g\|_{L^\infty}+1$ is finite. Let $ \eta \in \tilde{\mathcal{V}}(X',\omega'_X,n,A,p_0,K)$. We have $\eta^n= f \omega^n_{X'}$. Let $f_1:=f \circ \pi$. We infer that 
\begin{align*}
\int_Y f_1g |\log (f_1g)|^{p_0} \theta_Y^n &= \int_{\{f_1 g \le M\}} f_1 g |\log (f_1 g)|^{p_0} \theta_Y^n+ \int_{\{f_1 g >M\}} f_1 g |\log (f_1 g)|^{p_0} \theta_Y^n\\
 & \lesssim  M \log^{p_0} M \int_Y \theta_Y^n +  \int_{Y} f_1|\log f_1|^{p_0} \pi^* \omega^n_{X'} \lesssim 1.
\end{align*}
Hence the density of $\pi^* \eta^n$ (with respect to $\theta_Y^n$) is of uniform $L^1 \log L^{p_0}$-norm on $Y$. We show now that the cohomology class of $\pi^* \eta$ is bounded uniformly. To this end, we note that since $\pi^* \omega_{X'}$ belongs to big cohomology class, there exists a K\"ahler current $T$ cohomologous to $\pi^* \omega_{X'}$. Hence $T \ge \delta \theta_Y$ for some constant $\delta>0$. By monotonicity of non-pluripolar products (see \cite{Lu-Darvas-DiNezza-mono,Viet-generalized-nonpluri,WittNystrom-mono}), one has 
$$\int_Y \pi^* \eta \wedge \pi^* \omega_{X'}^{n-1} \ge \int_Y \langle \pi^* \eta \wedge T^{n-1} \rangle \ge \delta^{n-1} \int_Y \pi^* \eta \wedge \theta_Y^{n-1},$$  
where we recall that for closed positive $(1,1)$-currents $T_1, \ldots, T_k$ on $Y$, we denote by $\langle T_1 \wedge \cdots \wedge T_k \rangle$ the non-pluripolar product of $T_1, \ldots,T_k$ on $Y$ (see \cite{BEGZ}).  Consequently, the term $\int_Y \pi^* \eta \wedge \theta_Y^{n-1}$ is uniformly bounded. It follows that there are positive constants $A',K'$  so that $\pi^* \eta \in \tilde{\mathcal{V}}(Y,\theta_Y,n,A',p_0,K')$ for every $\eta \in \tilde{\mathcal{V}}(X',\omega'_X,n,A,p_0,K)$. The desired assertions now follow from Theorem \ref{the-main-univolume}.
\end{proof}

\begin{example}\label{ex-no-nonvanishing}  \normalfont Let $X$ be a projective manifold and $(L,h)$ be a very ample Hermitian line bundle on $X$. Let $\omega_X$ be a K\"ahler form on $X$. Fix a constant $A_0>0$. For $s \in H^0(X, L)$ and for $\epsilon \in (0,1]$, let $\omega_{\epsilon,s}$ be the unique K\"ahler form such that $\{\omega_{\epsilon,s}\} \cdot \{\omega_X\}^{n-1} \le A_0$ and 
$$V_{\omega_{\epsilon,s}}^{n-1}\omega_{\epsilon,s}^n= c_{\epsilon,s} (|s|_h^2+\epsilon) \omega_X^n,$$
 where $c_{\epsilon,s}$ is a constant such that the right-hand side of the equation has mass equal to $1$ on $X$. The metrics $\omega_{\epsilon,s}$ forms a family $\mathcal{B}$ which is contained in $\mathcal{V}(X,\omega_X,n,A,p,K)$ for some constants $A,p,K$, but since $L$ is very ample, there is no continuous function $\gamma$ on $X$ with $\dim_{\mathcal{H}}\{\gamma=0\}< 2n-1$ so that  $B$ is a subset of $\mathcal{W}(X,\omega_X,n,A,p,K,\gamma)$ for some constants $A,p,K$. 

 Our main results applies to $\mathcal{B}$ and implies that any metric in $\mathcal{B}$ enjoys a uniform diameter bound and a uniform local non-collapsing estimate for its volume, and hence, $\mathcal{B}$ is relatively compact in the Gromov-Hausdorff topology. We note that one can consider $\omega_{\epsilon,s}$ for $\epsilon=0$ as well. In this case $\omega_{\epsilon,s} \in \tilde{\mathcal{V}}(X,\omega_X,n,A,p,K)$ (because $\omega_{\epsilon,s}$ is a K\"ahler form outside $\{s=0\}$ by \cite{Yau1978}), and the family of $\omega_{\epsilon,s}$ still satisfies similar properties as $\mathcal{B}$.
\end{example}

We mow present the proof of Theorem \ref{the-KEflow}.

\begin{proof}[Proof of Theorem \ref{the-KEflow}]
Let $\Omega$ be a smooth volume form with $\int_X \Omega =1$. Put $\theta:= i \partial \bar \partial \log \Omega$.  Then $\theta$ is a smooth form in $2 \pi c_1(K_X)$. Let $\omega_t:= (1- e^{-t}) \theta+e^{-t} \omega_0$. Thus we can write $\omega(t)= \ddc \varphi+ \omega_t$ and $\varphi(t,x)$ satisfies the equation 
$$\partial_t \varphi = \log \frac{e^{(n- \kappa)t} (i \partial \bar\partial \varphi + \omega_t)^n }{\Omega} - \varphi, \quad \varphi(0, \cdot)=0,$$ 
where $\kappa$ is the numerical dimension of $K_X$, \emph{i.e,} $\kappa$ is the maximal integer $k$ so that $(c_1(K_X))^k \not = 0$. Recall that $\varphi, \partial_t \varphi$ are bounded from above uniformly in $t$, and the volume of $\omega(t)$ is comparable to $e^{(n-\kappa)t}$ (see \cite[Lemmas 9.1 and 9.2]{Guo-Phong-Song-Sturm}). Hence one sees that $\omega(t)$ belongs to $\mathcal{V}(X,\omega_X,n,A,p_0,K)$ for $p_0= 2n$, $A,K$ big enough constants independent of $t$. The desired assertions now follows from Theorem \ref{the-main-univolume}. 
\end{proof}

\bibliography{biblio_family_MA,biblio_Viet_papers,bib-kahlerRicci-flow}
\bibliographystyle{siam}

\bigskip

\noindent
\Addresses
\end{document}